\tikzset{->-/.style={decoration={  markings,  mark=at position #1 with
    {\arrow{>}}},postaction={decorate}}}
\tikzset{-<-/.style={decoration={  markings,  mark=at position #1 with
    {\arrow{<}}},postaction={decorate}}}
\def\red{\color{red}}
\numberwithin{equation}{section}
\theoremstyle{plain}
\newtheorem{thm}{Theorem}[section]
\newtheorem{cor}[thm]{Corollary}
\newtheorem{lem}[thm]{Lemma}
\newtheorem{prop}[thm]{Proposition}
\theoremstyle{definition}
\newtheorem{defn}[thm]{Definition}
\newtheorem{exm}[thm]{Example}
\newtheorem{rem}[thm]{Remark}
\newcommand{\Hom}{\operatorname{Hom}\nolimits}
\newcommand{\op}{\operatorname{op}\nolimits}
\newcommand{\Id}{\operatorname{Id}\nolimits}
\renewcommand{\mod}{\mathsf{mod}\hspace{.01in}}
\newcommand{\M}{\mathcal M}
\newcommand{\B}{\mathcal B}
\newcommand{\oB}{\overline{\B}}
\newcommand{\U}{\mathcal U}
\newcommand{\V}{\mathcal V}
\newcommand{\W}{\mathcal W}
\newcommand{\s}{\mathcal S}
\newcommand{\T}{\mathcal T}
\newcommand{\I}{\mathcal I}
\newcommand{\D}{\mathcal D}
\newcommand{\K}{\mathcal K}
\newcommand{\N}{\mathcal N}
\newcommand{\R}{\mathcal R}
\newcommand{\X}{\mathcal X}
\newcommand{\Y}{\mathcal Y}
\newcommand{\Z}{\mathcal Z}
\newcommand{\C}{\mathcal C}
\newcommand{\EE}{\mathbb E}
\newcommand{\svecv}[2]{\left(\begin{smallmatrix}
      #1 \\
      #2
    \end{smallmatrix}\right)}
\newcommand{\svech}[2]{\left(\begin{smallmatrix}
      #1 & #2
\end{smallmatrix}\right)}
\renewcommand{\emph}{\textit}
\renewcommand{\phi}{\varphi}
\begin{document}

\title{Extriangulated ideal quotients and Gabriel-Zisman localizations}

\thanks{Yu Liu was supported by the National Natural Science Foundation of China (Grant No. 12171397). Panyue Zhou was supported by the National Natural Science Foundation of China (Grant No. 12371034) and by the Hunan Provincial Natural Science Foundation of China (Grant No. 2023JJ30008). }

\author{Yu Liu and Panyue Zhou$^\ast$}
\address{School of Mathematics and Statistics, Shaanxi Normal University, 710062 Xi'an, Shaanxi, P. R. China}
\email{liuyu86@swjtu.edu.cn}
\address{School of Mathematics and Statistics, Changsha University of Science and Technology, 410114 Changsha, Hunan,  P. R. China}
\email{panyuezhou@163.com}
\thanks{$^\ast$Corresponding author. \hspace{1em}
The authors would like to thank Professor Bin Zhu for helpful discussions.}

\begin{abstract}
Let $(\B,\EE,\mathfrak{s})$ be an extriangulated category and $\s$ be an extension closed subcategory of $\B$.
In this article, we prove that the Gabriel-Zisman localization $\B/\s$ can be realized as
an ideal quotient inside $\B$ when $\s$ satisfies some mild conditions. The ideal quotient is an extriangulated category. We show that the equivalence between the ideal quotient and the localization preserves the extriangulated category structure. We also discuss the relations of our results with Hovey twin cotorsion pairs and Verdier quotients.

\end{abstract}
\keywords{cotorsion pairs; extriangulated categories; localizations; quotient categories}
\subjclass[2020]{18E35; 18F05; 18E10; 18G80}
\maketitle

\section{Introduction}
 Triangulated categories were introduced in the mid 1960's by Verdier \cite{V}.
Having their origins in algebraic geometry and
algebraic topology, triangulated categories have by now become indispensable
in many different areas of mathematics. The Verdier quotient $\mathcal T/\s$ of a triangulated category $\mathcal T$ by a triangulated subcategory $\s$ is defined by a universal property with respect to triangulated functors out of $\mathcal T$. On the other hand, $\mathcal T/\s$ is in fact a localization of $\mathcal T$, which means
 it is obtained from $\mathcal T$ by formally inverting a class of morphisms.
For example, derived categories are certain Verdier quotients of homotopy categories.
Localization is a process of adding formal inverses to an algebraic structure known as Gabriel-Zisman localisation \cite{GZ}, morphisms in the new category can be regarded as compositions of the original morphisms and the formal inverses
that were added. This makes Verdier quotients a bit hard to understand since
taking Verdier quotients drastically change morphisms.

Iyama and Yang \cite{IYa2} gave a sufficient condition for a Verdier quotient $\mathcal T /\s$ of a triangulated category $\mathcal T$ by a thick subcategory $\s$ to be realized inside of $\mathcal T$ as an ideal quotient.
Concretely speaking, they assume that $\mathcal T$ and $\s$ satisfy the following conditions:
\begin{itemize}
\item[(T0)]  $\mathcal T$ is a triangulated category with a shift functor $[1]$, and $\s$ is a thick subcategory of $\mathcal T$.  Denote by
$\mathcal U:=\mathcal T/\s$ the Verdier quotient (which is a triangulated category).

\item[(T1)]   $\s$ has a torsion pair $(\X,\Y)$.

\item[(T2)] $(\X,\X^{\perp})$ and $({^\perp\Y},\Y)$ form two torsion pairs in $\mathcal T$,
where $\X^{\perp}=:\{T\in\mathcal T\mid {\rm Hom}_{\mathcal T}(\X,T)=0\}$
and ${^{\perp}\Y}=:\{T\in\mathcal T\mid {\rm Hom}_{\mathcal T}(T,\Y)=0\}$.
\end{itemize}
Define two full subcategories of $\mathcal T$:
$$\mathcal Z:=\X^{\perp}\cap{^\perp\Y[1]}\hspace{2mm}\mbox{and}\hspace{2mm}\mathcal M:=\X[1]\cap \Y.$$
Denote by $\mathcal Z/[\mathcal M]$ the ideal quotient category of $\Z$ by $\mathcal M${\red .} Iyama and Yang \cite{IYa2} realized the Verdier quotient $\mathcal U=\mathcal T /\s$ as the ideal quotient
$\mathcal Z/[\mathcal M]$.

\begin{thm}\cite[Theorem 1.1]{IYa2}\label{1.1}
With the assumptions {\rm (T0), (T1)} and {\rm (T2)} as above,
 the composition $\mathcal Z\hookrightarrow \mathcal T\to \mathcal U$
of natural functors induces an equivalence of additive categories:
$\mathcal Z/[\mathcal M]\xrightarrow{\hspace{1mm}\simeq\hspace{1mm}}\mathcal U$.
\end{thm}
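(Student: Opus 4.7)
The plan is to show that the composite $\Z\hookrightarrow\T\to\U$ descends to an additive functor $\Phi\colon\Z/[\M]\to\U$ that is essentially surjective and fully faithful. That $\Phi$ is well-defined is immediate: $\M\subseteq\Y\subseteq\s$, so every morphism factoring through an object of $\M$ is annihilated in the Verdier quotient $\U=\T/\s$.

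For essential surjectivity, I would take $T\in\T$ and first apply the torsion pair $(\X,\X^{\perp})$ to obtain $X\to T\to T_{1}\to X[1]$ with $X\in\s$, so $T\cong T_{1}$ in $\U$. Since $({^\perp\Y},\Y)$ is a torsion pair in $\T$, so is its shift $({^\perp\Y[1]},\Y[1])$; applying it to $T_{1}$ yields $T_{2}\to T_{1}\to Y[1]\to T_{2}[1]$ with $Y\in\Y\subseteq\s$, whence $T_{1}\cong T_{2}$ in $\U$. A $\Hom(\X,-)$-long-exact-sequence argument on this last triangle, using $\Hom(\X,T_{1})=0$ and $\Hom(\X,\Y)=0$, forces $T_{2}\in\X^{\perp}\cap{^\perp\Y[1]}=\Z$.

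For faithfulness, suppose $f\colon Z\to Z'$ with $Z,Z'\in\Z$ is zero in $\U$, so $f=\beta\alpha$ factors through some $S\in\s$. Decomposing $S$ via $(\X,\Y)$ in $\s$ as $X\to S\xrightarrow{p}Y\to X[1]$, the condition $Z'\in\X^{\perp}$ lets $\beta$ factor as $\bar\beta\circ p$, reducing the task to factoring $p\alpha\colon Z\to Y$ through $\M$. I would apply $(\X,\X^{\perp})$ to $Y[-1]$ and rotate, using $\X^{\perp}\cap\s=\Y$, to obtain a triangle $T'\to X'[1]\to Y\to T'[1]$ with $T'[1]\in\Y[1]$. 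Since $Z\in{^\perp\Y[1]}$, the map $p\alpha$ lifts to a morphism $Z\to X'[1]$. Next I apply $({^\perp\Y},\Y)$ to $X'[1]$, obtaining $W\to X'[1]\to Y''\to W[1]$ with $W\in{^\perp\Y}\cap\s=\X$; closure of $\X[1]$ under extensions then yields $Y''\in\X[1]\cap\Y=\M$, and $\Hom(W,Y)=0$ makes the map $X'[1]\to Y$ factor through $Y''$, exhibiting the desired factorization of $f$ through $\M$.

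Fullness is the main obstacle. Given $\phi\in\Hom_{\U}(Z,Z')$ represented by a roof $Z\xleftarrow{s} W\xrightarrow{f} Z'$ with $\Cone(s)\in\s$, I would iteratively replace $W$: applying $(\X,\X^{\perp})$ and using $Z,Z'\in\X^{\perp}$ lets $s,f$ descend to the $\X^{\perp}$-component $W_{1}$; then applying $({^\perp\Y[1]},\Y[1])$ to $W_{1}$ yields $W_{2}$, which by a $\Hom(\X,-)$-computation also lies in $\X^{\perp}$, so $W_{2}\in\Z$. Octahedral arguments confirm the new structure maps still have cones in $\s$. The crux is then to prove: \emph{any $s''\colon W_{2}\to Z$ between objects of $\Z$ with $\Cone(s'')\in\s$ admits $t\colon Z\to W_{2}$ in $\T$ such that $s''t-\id_{Z}$ factors through $\M$.} Granted this, $\tilde f:=f''\circ t\in\Hom_{\T}(Z,Z')$ satisfies $\Phi(\tilde f)=\phi$ in $\U$, completing fullness. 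The key statement itself is proved by decomposing $C:=\Cone(s'')$ via $(\X,\Y)$ and running the same torsion-pair cascade from the faithfulness step on the connecting morphism $v\colon C\to W_{2}[1]$, ultimately using $\M=\X[1]\cap\Y$ to bridge the two torsion pairs at the point where the obstruction must be shown to land in $[\M]$.
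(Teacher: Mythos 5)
The paper itself cites this theorem from Iyama--Yang without proof, so the meaningful comparison is with the route the paper uses for its generalization (Theorem \ref{main5}): there one builds an explicit functor $G\colon\B\to\Z/[\W]$ via minimal approximations, shows $G$ inverts the weak equivalences, and then checks the induced functor out of the localization is an equivalence. You instead work directly with the restriction $\Phi\colon\Z/[\M]\to\U$ and try to verify it is an equivalence, which is a genuinely different route. Your arguments for well-definedness, essential surjectivity, and faithfulness are correct; in particular the cascade lemma --- every morphism from an object of $\Z$ to an object of $\Y$ factors through $\M$ --- is exactly the right core of faithfulness and you prove it cleanly.

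Fullness, however, has a real gap. Your reduction of the roof to one with vertex in $\Z$ is fine, and you correctly isolate the key claim: for $s\colon W\to Z$ with $W,Z\in\Z$ and $\Cone(s)\in\s$, there is $t\colon Z\to W$ with $st-\id_Z\in[\M]$. But this claim is the heart of the whole theorem, and the sketch (``run the cascade on $v\colon C\to W[1]$'') does not deliver it: neither $C$ nor $W[1]$ has the shape ($\Z$ as source, $\Y$ as target) that the cascade lemma needs. Unwinding what is actually required: in the triangle $W\xrightarrow{s}Z\xrightarrow{u}C\xrightarrow{v}W[1]$, the key claim is \emph{equivalent} to $u\colon Z\to C$ factoring through $\M$, because $\Hom(\M,W[1])=\Hom(\M[-1],W)=0$ (as $\M\subseteq\X[1]$ and $W\in\X^{\perp}$) lets any $M\to C$ lift to $M\to Z$. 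Now decompose $C$ as $X_C\to C\xrightarrow{p}Y_C\to X_C[1]$; your cascade shows $pu\colon Z\to Y_C$ factors through some $M\in\M$, but lifting the resulting map $M\to Y_C$ back through $p$ to $C$ is obstructed by $\Hom(M,X_C[1])\subseteq\Hom(\X[1],\X[1])=\Hom(\X,\X)$, which has no reason to vanish. So the claim needs a genuinely new idea --- a further modification of the roof, a correction of $f$ by something in $[\M]$, or an explicit construction of $t$ --- and this is precisely the difficulty that the construct-the-quasi-inverse approach of Iyama--Yang and of the present paper is designed to sidestep.
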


When the condition (T1) is replaced by a special case:
$$ \text{(T1}')\quad (\X,\Y) \text{ is a co-}t \text{-structure in } \s,$$
the equivalence above becomes a \textbf{triangle} equivalence,
see \cite[Theorem 1.2]{IYa2}. There are many examples of this realization, such as Buchweitz's equivalence \cite{Bu,KV,R} between the singularity category of an Iwanaga-Gorenstein ring and the stable category of Cohen-Macaulay modules over the ring. More related researches can be found in \cite{Am,BOJ,C,CZ,G,IYa1,K1,OPS,ZH}.

Recently, the notion of an extriangulated category   was introduced by Nakaoka and Palu \cite{NP} as a
simultaneous generalization of exact categories and triangulated categories.
Exact categories and extension closed subcategories of a triangulated category
are extriangulated categories, hence
 many results on exact categories and triangulated categories can be unified
in the framework of an extriangulated category. There are also plenty of examples of
extriangulated categories which are neither exact nor triangulated, see \cite{NP,ZZ,HZZ,ZhZ}.

In \cite{NP}, Nakaoka and Palu gave a bijection between Hovey twin cotorsion pairs and admissible model structures.
Let $(\B,\EE,\mathfrak{s})$ be an extriangulated category satisfying some mild conditions, and
$((\s,\T), (\U,\V))$ be a Hovey twin cotorsion pair in $\B$.
Define two full subcategories of $\B$:
$$\mathcal M:=\U\cap\V\hspace{2mm}\mbox{and}\hspace{2mm}\mathcal Z:=\mathcal T\cap\mathcal U.$$
Consider the following classes of morphisms:
\begin{itemize}
\item $wFib:=$ the class of deflations $f$ with ${\rm CoCone}(f)\in\mathcal V$;

\item ${wCof}:=$ the class of inflations $g$ with ${\rm Cone}(g)\in\mathcal S$;

\item $\mathbb{W}:=wFib\circ wCof$.
\end{itemize}

 They showed the following theorem.

\begin{thm}{\rm \cite[Corollary 5.25, Theorem 6.20]{NP}}\label{np}\label{1.2}
The composition of the canonical inclusion $i\colon\Z\hookrightarrow \B$ and
the Gabriel-Zisman localization $\ell\colon \B\to \B[\mathbb{W}^{-1}]$
induces an equivalence $\overline{\ell}:\mathcal Z/[\mathcal M]\xrightarrow{\hspace{1mm}\simeq\hspace{1mm}}\B[\mathbb{W}^{-1}]$,
which is depicted as follows:
$$
\xymatrix{
\Z~\ar[d]_\pi\ar@{^{(}->}[r]^{i}&\B\ar[r]^{\ell\quad\;\;}&\B[\mathbb{W}^{-1}].\\
\Z/\mathcal M\ar@{..>}[rru]_{\overline{\ell}}&&
}
$$
Moreover, $\B[\mathbb{W}^{-1}]$ is a triangulated category.
\end{thm}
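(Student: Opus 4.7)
My plan is to realize $\B[\mathbb{W}^{-1}]$ as the homotopy category of the admissible model structure encoded by the Hovey twin cotorsion pair $((\s,\T),(\U,\V))$, with $wCof$ as the trivial cofibrations, $wFib$ as the trivial fibrations, and $\mathbb{W}=wFib\circ wCof$ as the weak equivalences. In this picture $\Z=\T\cap\U$ is the subcategory of bifibrant objects, while $\M=\U\cap\V$ is the subcategory of trivially bifibrant objects, so the claimed equivalence $\Z/[\M]\simeq\B[\mathbb{W}^{-1}]$ becomes the familiar identification of the homotopy category of a model category with bifibrant objects modulo the homotopy relation, where maps factoring through $\M$ play the role of null-homotopic ones.

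First I would verify that $\mathbb{W}$ is closed under composition and satisfies the 2-of-3 property, so that the localization $\ell:\B\to\B[\mathbb{W}^{-1}]$ admits a workable description. Next I would establish essential surjectivity of $\bar\ell$: given $B\in\B$, apply the cotorsion pair $(\U,\V)$ to produce a deflation $U_B\twoheadrightarrow B$ with cocone in $\V$ (hence a $wFib$-morphism), then apply $(\s,\T)$ to $U_B$ to produce an inflation $U_B\rightarrowtail Z_B$ with cone in $\s$ (hence a $wCof$-morphism), and use the $\EE$-vanishing inherent in the twin cotorsion pair to check that $Z_B\in\T\cap\U=\Z$. For fullness and faithfulness of $\bar\ell$ on $\Z$, I would show that $\mathbb{W}$ enjoys enough calculus of fractions so that any roof $Z\xleftarrow{w}B\xrightarrow{f}Z'$ between bifibrant objects rectifies to a genuine morphism $Z\to Z'$ in $\B$, using that trivial fibrations into bifibrant targets admit sections up to $\M$ (and dually trivial cofibrations out of bifibrant sources admit retractions up to $\M$), and that two morphisms $Z\rightrightarrows Z'$ become equal in $\B[\mathbb{W}^{-1}]$ precisely when their difference factors through an object of $\M$, detected by a cylinder object constructed successively from the two cotorsion pairs.

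For the triangulated structure on $\B[\mathbb{W}^{-1}]$, transport the extriangulated structure from $\Z$ (which is extension closed in $\B$, hence extriangulated under the restrictions of $\EE$ and $\mathfrak{s}$) to the ideal quotient $\Z/[\M]$; since $\M=\U\cap\V$ meets both halves of the twin pair, every $\EE$-extension in $\Z$ ending (or starting) at an object of $\M$ splits after passing to the quotient, which is precisely what is needed to define a shift functor on $\Z/[\M]$ and to upgrade its extriangulated structure to a genuine triangulated one. The principal obstacle, I expect, is the coordinated use of both cotorsion pairs in the fullness/faithfulness step: constructing a single cylinder object inside $\B$ that simultaneously detects the $\mathbb{W}$-equivalence relation and the $[\M]$-factorization relation requires a delicate interplay between the inflations and deflations coming from $(\s,\T)$ and those coming from $(\U,\V)$, and it is here that the Hovey compatibility between the two pairs does the essential work.
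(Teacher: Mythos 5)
The paper does not prove this statement; it is quoted verbatim from Nakaoka--Palu (\cite[Corollary 5.25, Theorem 6.20]{NP}) as background in the introduction, so there is no proof of the theorem in the present paper to compare against. That said, your high-level framing---realizing $\B[\mathbb{W}^{-1}]$ as the homotopy category of the admissible model structure attached to the Hovey twin cotorsion pair, identifying $\Z=\T\cap\U$ with the bifibrant objects, and reading off $\Z/[\M]\simeq\B[\mathbb{W}^{-1}]$ as the standard ``bifibrant-modulo-homotopy'' description of a homotopy category, with maps factoring through $\M$ playing the role of null-homotopies---is indeed the route taken in [NP, Sections 4--5], and the first two paragraphs of your sketch follow that plan faithfully.

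There is, however, a genuine gap in your account of the triangulated structure. You argue that since $\M=\U\cap\V$ consists of projective-injective objects in the extriangulated category $\Z$ (which is correct: $\EE(\M,\Z)=0=\EE(\Z,\M)$), this ``is precisely what is needed to define a shift functor on $\Z/[\M]$ and to upgrade its extriangulated structure to a genuine triangulated one.'' Projective-injectivity of $\M$ alone is not sufficient: a Frobenius-style shift $Z\mapsto Z\langle1\rangle$ built from conflations $Z\to M\to Z\langle1\rangle\dashrightarrow$ with $M\in\M$ and $Z\langle1\rangle\in\Z$ requires $\Z$ to have \emph{enough} $\M$-injectives (and, dually, enough $\M$-projectives), and this does not hold for an arbitrary Hovey twin cotorsion pair. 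Indeed, the present paper establishes that $\Z$ is Frobenius only under the additional hypothesis that both cotorsion pairs are hereditary (Proposition~\ref{Fro}); the cited theorem makes no such assumption. Nakaoka--Palu's Theorem 6.20 constructs the suspension differently: for an arbitrary $A\in\B$ one takes a conflation $A\to V_A\to A[1]\dashrightarrow$ coming from the second cotorsion pair $(\U,\V)$, with $V_A\in\V$ (not merely in $\M$) and $A[1]\in\U$ (not necessarily in $\Z$), and one checks this descends to a well-defined auto-equivalence of $\B[\mathbb{W}^{-1}]$ with distinguished triangles arising from (co)fiber sequences of the model structure. This is carried out at the level of $\B$, not inside $\Z$, and is precisely the construction the paper recalls at the start of Section~4. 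Your cylinder-object remark in the fullness/faithfulness step is in the right spirit, but as written your triangulated-structure argument would only establish the weaker, hereditary case.
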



Note that $(\mathcal X, \mathcal Y)$ is a torsion pair in a triangulated category $\T$ in the sense of Iyama and  Yoshino \cite{IYo}
if and only if $(\mathcal X[1],\mathcal Y)$  is a cotorsion pair in $\T$ in the sense of Nakaoka \cite{N}. In this article, by using cotorsion pairs, we will develop a theory which shows that similar equivalences between localizations and ideal quotients exist under a more general setting. Let $(\B,\EE,\mathfrak{s})$ be an extriangulated category. This paper is dedicated to
the Gabriel-Zisman localization $\B/\s$ of an extriangulated category $(\B,\EE,\mathfrak{s})$ with respect to  an extension closed subcategory $\s$ to be realized as an idea quotient inside $\B$.
Our main result is the following.

\begin{thm}\label{main} {\rm (see Theorem \ref{main5} and Sections 3 and 4 for details)}
Let $k$ be a field, $(\B,\EE,\mathfrak{s})$ be a Krull-Schmidt, Hom-finite, $k$-linear extriangulated category  satisfying condition {\rm (WIC)},
and $\s$ be an extension closed  full subcategory of $\B$. Let $\X$ and $\Y$ be full subcategories of $\B$ satisfying the following conditions:
\begin{itemize}
\item[\rm (B1)] $((\X,{\X^{\bot_1}}),({^{\bot_1}}\Y,\Y))$ is a twin cotorsion pair in $\B$,
where ${\X^{\bot_1}}=:\{B\in\B\mid \EE(\X,B)=0\}$ and ${^{\bot_1}}\Y=:\{B\in\B\mid \EE(B,\Y)=0\}$.
\smallskip

\item[\rm (B2)] $(\X,\Y)$ is a cotorsion pair in $\s$.
\end{itemize}
Let
$$\Z:=\X^{\bot_1}\cap{^{\bot_1}}\Y\hspace{2mm}\mbox{and}\hspace{2mm}\W:=\X\cap\Y.$$
Assume that $\s$ is closed under taking cones, which means
$$\text{for any }\EE\text{-triangle }A\to B\to C\dashrightarrow \text{, if }A,B\in \s, \text{ then we have }C\in\s.$$
Then the Gabriel-Zisman localization $\B/\s$ (see Definition \ref{lo} for details) can be realized as the ideal quotient $\Z/[\W]$,
that is, there exists an equivalence
 $F:\mathcal Z/[\mathcal W]\xrightarrow{\hspace{1mm}\simeq\hspace{1mm}}\B/\s$.
Moreover, when $\B$ has enough projectives and enough injectives,
\begin{itemize}
\item[\rm (1)] if $\mathcal S$ is a thick subcategory of $\B$, then $F$ becomes an \textbf{extriangle} equivalence (see Definition \ref{exeq} for details);
\item[\rm (2)] if $(\X, \X^{\bot_1})$, $({{^{\bot_1}}\Y},\Y)$ are hereditary cotorsion pairs, then $F$ becomes a \textbf{triangle} equivalence.
\end{itemize}
\end{thm}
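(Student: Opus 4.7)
The natural strategy is to factor the desired equivalence through the Nakaoka--Palu localization $\B[\mathbb{W}^{-1}]$. Concretely, my plan has three steps: first, verify that $((\X,\X^{\bot_1}),({^{\bot_1}}\Y,\Y))$ is a \emph{Hovey} twin cotorsion pair, so that Theorem \ref{1.2} produces an equivalence $\Z/[\W]\xrightarrow{\sim}\B[\mathbb{W}^{-1}]$; second, identify $\B[\mathbb{W}^{-1}]$ with the Gabriel--Zisman localization $\B/\s$ of Definition \ref{lo}; and third, promote this to an extriangle (resp.\ triangle) equivalence under the hypotheses of (1) (resp.\ (2)).

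For the Hovey step, the vanishing $\EE(\X,\Y)=0$ follows from (B2), which together with (B1) gives the basic twin cotorsion pair condition. The Hovey compatibility---roughly, that every object of $\s$ is $\mathbb{W}$-trivial and that $\mathbb{W}$ is closed under 2-out-of-3---follows because any $S\in\s$ sits in an $\EE$-triangle $Y\to X\to S\dashrightarrow$ coming from $(\X,\Y)$ by (B2), and the fact that $\s$ is closed under cones lets one splice such triangles in the necessary way. A short Krull--Schmidt argument then identifies Nakaoka--Palu's heart ${^{\bot_1}}\Y\cap\Y$ with our $\W=\X\cap\Y$: any $W\in{^{\bot_1}}\Y\cap\Y\subseteq\s$ fits in a triangle $Y\to X\to W\dashrightarrow$ with $X\in\X$, $Y\in\Y$, which splits because $\EE(W,Y)=0$, forcing $W$ to be a direct summand of $X$ and hence in $\X\cap\Y$.

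The principal obstacle is the second step: identifying $\mathbb{W}$ with the class inverted in $\B/\s$. The inclusion of $\mathbb{W}$ into the $\B/\s$-inverted class is immediate, since members of $\mathbb{W}$ are compositions of inflations and deflations with cones and cocones in $\X\cup\Y\subseteq\s$. The converse requires showing that any morphism $f$ whose associated $\EE$-triangle has third term in $\s$ can be explicitly factored as $g\circ h$ with $h$ a $wCof$ and $g$ a $wFib$; I would resolve the third term through $(\X,\Y)$ inside $\s$, pull back along $f$, and reassemble the resulting triangles using (WIC) and the extriangulated analogue of the octahedron axiom, exploiting at the final step that $\s$ is closed under cones to stay within $\mathbb{W}$.

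For the enhanced conclusions: in (1), thickness of $\s$ together with enough projectives and injectives makes $[\W]$ an admissible ideal so that $\Z/[\W]$ inherits an extriangulated structure compatible with $F$, by now-standard technology on extriangulated quotients; checking that $F$ sends $\EE$-triangles to $\EE$-triangles is then routine. In (2), hereditariness of $(\X,\X^{\bot_1})$ and $({^{\bot_1}}\Y,\Y)$ forces the remaining $\EE$-obstructions on the quotient to vanish, collapsing the extriangulated structure on $\Z/[\W]$ to a triangulated one (with shift induced by the cotorsion pair data in the spirit of Iyama--Yoshino and Nakaoka), so the extriangle equivalence from (1) automatically upgrades to a triangle equivalence.
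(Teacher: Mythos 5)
Your strategy founders at the first step: under the stated hypotheses — (B1), (B2), and $\s$ closed under taking cones but \emph{not} assumed thick — the twin cotorsion pair $((\X,\X^{\bot_1}),({^{\bot_1}}\Y,\Y))$ need \emph{not} be Hovey. The paper gives an explicit counterexample (Example~\ref{example}, in $\mathrm{D}^b(kQ)$ for $Q$ of type $A_4$): there $\s$ is extension closed with $\s[1]\subseteq\s$, so closed under cones, and $(\X,\Y)$ is a cotorsion pair in $\s$, yet the twin cotorsion pair is explicitly not Hovey because $\s_R\neq\s_L$. Your splicing heuristic — resolving $S\in\s$ by an $\EE$-triangle $Y\to X\to S\dashrightarrow$ from (B2) and pushing forward using cone-closure — does show $\s\subseteq\s_L\cap\s_R$, but it cannot establish $\s_L=\s_R$; that genuinely requires extra input such as thickness of $\s$ (Lemma~\ref{L3}) or hereditariness of both cotorsion pairs (Proposition~\ref{Here}). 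Since Theorem~\ref{1.2} is formulated for Hovey twin cotorsion pairs, you cannot invoke it to obtain the equivalence $\Z/[\W]\simeq\B[\mathbb{W}^{-1}]$ under the general hypothesis, so the basic part of the theorem does not follow by your route.

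The paper circumvents this by working with $\B/\s$ directly rather than routing through Nakaoka--Palu. It constructs a functor $G\colon\B\to\Z/[\W]$ by choosing, for each object $B$, minimal-approximation $\EE$-triangles $B\to V_B\to X_B\dashrightarrow$ and $Y^B\to Z_B\to V_B\dashrightarrow$, setting $G(B)=Z_B$, and then (Proposition~\ref{im}) proving that $G$ inverts every morphism $f$ admitting an $\EE$-triangle $B\xrightarrow{\,f\,}C\to S\dashrightarrow$ with $S\in\s$; this argument uses the cone-closure of $\s$ but never $\s_L=\s_R$. The universal property of the Gabriel--Zisman localization then produces $H\colon\B/\s\to\Z/[\W]$, and Theorem~\ref{main5} checks directly that $H$ is dense, full and faithful, with $F$ its quasi-inverse (Proposition~\ref{quasi}). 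An equivalence $\B/\s\simeq\B[\mathbb{W}^{-1}]$ does still hold at this level of generality (Proposition~\ref{RW} and the surrounding discussion), but it is a corollary, not the engine. For the strengthened conclusions, your plan is essentially the paper's: thickness (Lemma~\ref{L3}) or hereditariness (Proposition~\ref{Here}) does force the Hovey condition, and the paper then proves the extriangle equivalence by transporting $\EE$-triangles through $G$ (Lemmas~\ref{U}, \ref{V}, Proposition~\ref{induce}) and the triangle equivalence by the Frobenius structure on $\Z$ (Proposition~\ref{Fro}), matching what you sketch for (1) and (2).
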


\begin{rem}
In Theorem \ref{1.1} and Theorem \ref{1.2}, the equivalences between ideal quotients and localizations are additive, although the localizations are triangulated and the ideal quotients are extriangulated. Theorem \ref{main} points out that the equivalences have better properties.
\end{rem}

%

This article is organized as follows. In Section 2, we review some elementary concepts and properties of extriangulated categories. In Section 3, we show the main result of this article: the existence of equivalences between localizations and quotient categories. In Section 4, we show that under certain conditions, the equivalence has better properties.

\section{Preliminaries}

Let us briefly recall the definition and some basic properties of extriangulated categories.
For more details, see \cite[Section 2,3]{NP}.

Let $\B$ be an additive category equipped with an additive bifunctor
$$\mathbb{E}: \B^{\rm op}\times \B\rightarrow {\rm Ab},$$
where ${\rm Ab}$ is the category of abelian groups. For any objects $A, C\in\B$, an element $\delta\in \mathbb{E}(C,A)$ is called an $\mathbb{E}$-extension.
Let $\mathfrak{s}$ be a correspondence which associates with an equivalence class of sequences
$$\mathfrak{s}(\delta)=\xymatrix@C=0.8cm{[A\ar[r]^x &B\ar[r]^y&C]}$$
to any $\mathbb{E}$-extension $\delta\in\mathbb{E}(C, A)$. This $\mathfrak{s}$ is called a {\it realization} of $\mathbb{E}$, if it makes the diagrams in \cite[Definition 2.9]{NP} commutative.
 A triplet $(\B, \mathbb{E}, \mathfrak{s})$ is called an {\it extriangulated category} if:
\begin{enumerate}
\setlength{\itemsep}{2.5pt}
\item $\mathbb{E}\colon\B^{\rm op}\times \B\rightarrow \rm{Ab}$ is an additive bifunctor.

\item $\mathfrak{s}$ is an additive realization of $\mathbb{E}$.

\item $\mathbb{E}$ and $\mathfrak{s}$  satisfy some `additivity' and `compatibility' conditions in \cite[Definition 2.12]{NP}.
 \end{enumerate}
\smallskip

We collect some basic concepts  which will be used later.

\begin{defn}\label{dein}
Let $(\B,\EE,\mathfrak{s})$ be an extriangulated category.
\begin{itemize}
\setlength{\itemsep}{2.5pt}
\item[{\rm (1)}] If a sequence $A\xrightarrow{~x~}B\xrightarrow{~y~}C$ realizes $\delta\in\mathbb{E}(C,A)$, we call the pair $( A\xrightarrow{~x~}B\xrightarrow{~y~}C,\delta)$ an {\it $\EE$-triangle}, and write it in the following way:
$$A\overset{x}{\longrightarrow}B\overset{y}{\longrightarrow}C\overset{\delta}{\dashrightarrow}.$$
We usually do not write this $``\delta"$ if it is not used in the argument.

\item[{\rm (2)}] An object $P\in\B$ is called {\it projective} if
for any $\EE$-triangle $A\overset{x}{\longrightarrow}B\overset{y}{\longrightarrow}C \dashrightarrow$ and any morphism $c:P\to C$, there exists $b:P\to B$ satisfying $yb=c$.
We denote the subcategory of projective objects by  $\mathcal P$. Dually, the subcategory of injective objects is denoted by $\I$.

\item[{\rm (3)}] We say that $\B$ {\it has enough projectives} if
for any object $C\in\B$, there exists an $\EE$-triangle
$A\to P\to C\dashrightarrow$
with $P\in\mathcal P$. Dually we can define  {\it having enough injectives}.

\item[{\rm (4)}] Let $\s$ be a subcategory of $\B$. We say $\s$ is {\it extension closed}
if  in any $\EE$-triangle $A\rightarrow B\rightarrow C\dashrightarrow$ with $A,C\in\s$, we have $B\in\s$.

\item[{\rm (5)}] Any $\EE$-triangle $A\to B\to C\dashrightarrow$ induces the following long exact sequences:
\begin{align*}
\Hom_\B(X,A)\to \Hom_\B(X,B)\to \Hom_\B(X,C)\to \EE(X,A)\to \EE(X,B)\to \EE(X,C);\\
\Hom_\B(C,X)\to \Hom_\B(B,X)\to \Hom_\B(A,X)\to \EE(C,X)\to \EE(B,X)\to \EE(A,X)
\end{align*}
where $X$ is an arbitrary object in $\B$.
\end{itemize}
\end{defn}

\begin{rem}
Any extension closed subcategory $\M$ of an extriangulated category $(\B,\EE,\mathfrak{s})$ has a natural extriangulated category structure $(\M,\EE|_{\M},\mathfrak{s}|_{\M})$  which inherits from $(\B,\EE,\mathfrak{s})$,
where $\EE|_{\M}$ is the restriction of $\EE$ onto ${\M^{\rm op}\times \M}$
and  $\mathfrak{s}|_{\M}$ is the restriction of $\mathfrak{s}$.
\end{rem}

In this paper, let $k$ be a field and $(\B,\mathbb{E},\mathfrak{s})$ be a Krull-Schmidt, Hom-finite, $k$-linear extriangulated category. Let $\mathcal P$ (resp. $\mathcal I$) be the subcategory of projective (resp. injective) objects.
When we say that $\C$ is a subcategory of $\B$, we always assume that $\C$ is full and closed under isomorphisms.

In this paper, the cotorsion pair is the main tool we use.

\begin{defn}\cite[Definition 2.1]{NP}
Let $\U$ and $\V$ be two subcategories of $\B$ which are closed under direct summands. We call $(\U,\V)$ a \emph{cotorsion pair} if it satisfies the following conditions:
\begin{itemize}
\item[(a)] $\EE(\U,\V)=0$.
\smallskip

\item[(b)] For any object $B\in \B$, there exist two $\EE$-triangles
\begin{align*}
 V^B\rightarrow  U^B\rightarrow B{\dashrightarrow},\quad
B\rightarrow  V_B\rightarrow  U_B{\dashrightarrow}
\end{align*}
satisfying $U_B,U^B\in \U$ and $V_B,V^B\in \V$.
\end{itemize}
\end{defn}

\begin{rem}
For an extension closed subcategory $\M$, we say a pair of subcategories $(\X,\Y)$ is a cotorsion pair in $\M$ if $\X\subseteq \M$, $\Y\subseteq \M$ and $(\X,\Y)$ is a cotorsion pair in the extriangulated category $(\M,\EE|_{\M},\mathfrak{s}|_{\M})$.
\end{rem}

By the definition of a cotorsion pair, we can immediately conclude the following result.

\begin{lem}\label{L1}
Let $(\U,\V)$ be a cotorsion pair in $\B$.
\begin{itemize}
\item[\rm (a)] $\V=\U^{\bot_1}:=\{ X\in \B \text{ }|\text{ } \EE(\U,X)=0\}$.
\item[\rm (b)] $\U={^{\bot_1}}\V:=\{ Y\in \B \text{ }|\text{ } \EE(Y,\V)=0\}$.
\item[\rm (c)] $\U$ and $\V$ are closed under extensions and direct sums.
\item[\rm (d)] $\mathcal I\subseteq \V$ and $\mathcal P\subseteq \U$.
\end{itemize}
\end{lem}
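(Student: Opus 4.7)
The plan is to prove \textrm{(a)} first, deduce \textrm{(b)} by a dual argument, and then bootstrap from these two identifications to obtain \textrm{(c)} and \textrm{(d)}. The key observation throughout is that every ``$\mathrm{Ext}^1$-vanishing'' condition in an extriangulated category translates, via the long exact sequences of Definition \ref{dein}(5) and the splitting criterion for $\EE$-triangles, into statements about existence or splitness of $\EE$-triangles. No step looks like a genuine obstacle; the only delicate point is keeping careful track of which defining $\EE$-triangle of the cotorsion pair should be applied in each direction.

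For \textrm{(a)}, the inclusion $\V\subseteq\U^{\bot_1}$ is immediate from axiom (a) of a cotorsion pair. For the reverse inclusion I would take $X\in\U^{\bot_1}$ and apply axiom (b) to produce the second $\EE$-triangle $X\to V_X\to U_X\overset{\delta}{\dashrightarrow}$ with $V_X\in\V$ and $U_X\in\U$. Since $\delta\in\EE(U_X,X)$ and $X\in\U^{\bot_1}$, the class $\delta=0$, so this $\EE$-triangle splits and $X$ is a direct summand of $V_X$; because $\V$ is closed under direct summands by hypothesis, $X\in\V$. Part \textrm{(b)} is proved dually, using the first $\EE$-triangle $V^B\to U^B\to B\dashrightarrow$ and the closure of $\U$ under direct summands.

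For \textrm{(c)}, given an $\EE$-triangle $U_1\to B\to U_2\dashrightarrow$ with $U_1,U_2\in\U$, I would apply $\Hom_\B(-,V)$ for an arbitrary $V\in\V$ to obtain the exact sequence
\[
\EE(U_2,V)\to \EE(B,V)\to \EE(U_1,V).
\]
Both outer terms vanish since $\EE(\U,\V)=0$, so $\EE(B,V)=0$; as $V\in\V$ was arbitrary, $B\in{^{\bot_1}}\V=\U$ by \textrm{(b)}. The same argument, with $\Hom_\B(U,-)$, handles $\V$. Closure under (finite) direct sums is then automatic from the additivity of $\EE$, combined with \textrm{(a)} and \textrm{(b)}. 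Finally for \textrm{(d)}, I would first observe that the lifting definition of projectivity forces $\EE(P,-)=0$: any $\EE$-triangle $A\to B\to P\dashrightarrow$ splits by lifting $\id_P$ along $B\to P$, so its extension class is zero. Consequently $\EE(P,\V)=0$ gives $P\in{^{\bot_1}}\V=\U$ by \textrm{(b)}, and dually $\EE(-,I)=0$ yields $I\in\U^{\bot_1}=\V$ by \textrm{(a)}. This completes all four assertions.
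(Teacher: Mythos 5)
Your proof is correct and follows essentially the same approach as the paper: the same splitting argument for parts (a) and (b), the same long-exact-sequence argument for closure under extensions in (c), and the same reduction of (d) to the vanishing $\EE(\mathcal P,-)=0$ and $\EE(-,\mathcal I)=0$. The only difference is cosmetic: where the paper cites Proposition 3.24 of Nakaoka--Palu for that vanishing, you re-derive it directly from the lifting definition of projectivity, which is a perfectly fine alternative.
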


\proof (a) Since $\EE(\U,\V)=0$, we have $\V\subseteq\U^{\bot_1}$.
Conversely, for any $M\in\U^{\bot_1}$, since $(\U,\V)$ is a cotorsion pair,
there exists an $\EE$-triangle
$$M\overset{x}{\longrightarrow}V_M\overset{y}{\longrightarrow}U_M\overset{\delta}{\dashrightarrow}$$
where $U_M\in\U$ and $V_M\in\V$.
Since $M\in\U^{\bot_1}$, we have $\EE(\U,M)=0$.
It follows that $\delta=0$ and then $x$ is a section.
This shows that $M$ is a direct summand of $V_M$.
Since $\V$ is closed under direct summands, we have $M\in\V$.

Dually we can show (b).

(c) By (a) and (b), $\U$ and $\V$ are closed under direct sums.
Now we prove that $\U$ is closed under extensions. Similarly, we can show that
$\V$ is closed under extensions.

Assume that
$$A\overset{f}{\longrightarrow}B\overset{g}{\longrightarrow}C\dashrightarrow$$
is an $\EE$-triangle where $A,C\in\U$. By \cite[Proposition 3.11]{NP}, we have the following
exact sequence:
$$\EE(C,\V)\to\EE(B,\V)\to\EE(A,\V).$$
Since $\U={^{\bot_1}}\V$ by (a) and $A,C\in\U$, we have $\EE(A,\V)=0$ and $\EE(C,\V)=0$.
It follows that $\EE(B,\V)=0$ which implies $B\in{^{\bot_1}}\V=\U$.

(d) By \cite[Proposition 3.24]{NP} and its dual,
we have $\EE(\mathcal P,\V)=0$ and $\EE(\U,\mathcal I)=0$.
By (a) and (b), we obtain that $\mathcal P\subseteq \U$ and  $\mathcal I\subseteq \V$. \qed

\begin{rem}
There is no simple relation between torsion and cotorsion pairs in the exact categories as in the triangulated categories. For example,
assume that $\mathcal A$ is an exact category with enough injectives $\mathcal I$, then $(\mathcal  A,\mathcal I)$ is a cotorsion pair in $\mathcal A$, but for any non-zero subcategory $\C\subseteq \mathcal A$, $(\C,\mathcal I)$ is not a torsion pair.
\end{rem}

We introduce a kind of important cotorsion pairs, which can be  regarded as a generalization of co-$t$-structures.

\begin{defn}
A cotorsion pair $(\U,\V)$ is called \emph{hereditary} if it satisfies the following conditions:
\begin{itemize}
\item[\rm (a)] For any $\EE$-triangle $A\to B\to C\dashrightarrow$, $B,C\in \U$ implies $A\in \U$.
\item[\rm (b)] For any $\EE$-triangle $A\to B\to C\dashrightarrow$, $A,B\in \V$ implies $C\in \V$.
\end{itemize}
\end{defn}

When $\B$ has enough projectives and enough injectives, we can define higher extensions $\EE^i(-,-), i\geq 1$ ($\EE^1:=\EE$) of the bifunctor $\EE$ (see \cite[Section 5.1]{LN} for details). Any $\EE$-triangle $A\to B\to C\dashrightarrow$ induces the following long exact sequences:
\begin{align*}
\EE(X,A)\to \EE(X,B)\to \EE(X,C)\to \EE^2(X,A)\to \EE^2(X,B)\to \EE^2(X,C)\to\cdots;\\
\EE(C,X)\to \EE(B,X)\to \EE(A,X)\to \EE^2(C,X)\to \EE^2(B,X)\to \EE^2(A,X)\to\cdots~~
\end{align*}
where $X$ is an arbitrary object in $\B$. Moreover, we have $\EE^i(\mathcal P,-)=0$ and $\EE^i(-,\mathcal I)=0$ for any positive integer $i$. We have the following proposition.

\begin{prop}\label{L2}
When $\B$ has enough projectives and enough injectives,  for any cotorsion pair $(\U,\V)$, the following conditions are equivalent:
\begin{itemize}
\item[\rm (i)] $\EE^2(\U,\V)=0$.
\item[\rm (ii)] For any $\EE$-triangle $A\to B\to C\dashrightarrow$, $B,C\in \U$ implies $A\in \U$.
\item[\rm (iii)] For any $\EE$-triangle $A\to B\to C\dashrightarrow$, $A,B\in \V$ implies $C\in \V$.
\end{itemize}
\end{prop}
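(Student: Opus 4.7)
The plan is to prove a cyclic chain of implications via the standard long exact sequences of higher extension groups, together with the characterizations $\U={^{\bot_1}}\V$ and $\V=\U^{\bot_1}$ from Lemma \ref{L1} and the fact that $\mathcal P\subseteq\U$, $\mathcal I\subseteq\V$. The tool we rely on throughout is the long exact sequence induced by an $\EE$-triangle on the higher bifunctors $\EE^i$ that the paper has just recalled, in particular the vanishing $\EE^i(\mathcal P,-)=\EE^i(-,\mathcal I)=0$ for $i\geq 1$.

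For (i) $\Rightarrow$ (ii): given an $\EE$-triangle $A\to B\to C\dashrightarrow$ with $B,C\in\U$, and any $V\in\V$, apply $\EE(-,V)$ to obtain the exact sequence
\[
\EE(B,V)\longrightarrow \EE(A,V)\longrightarrow \EE^2(C,V).
\]
The outer terms vanish (the left one by $\EE(\U,\V)=0$, the right one by assumption (i)), so $\EE(A,V)=0$ for all $V\in\V$, whence $A\in{^{\bot_1}}\V=\U$ by Lemma \ref{L1}(b). The argument for (i) $\Rightarrow$ (iii) is strictly dual: applying $\EE(U,-)$ to an $\EE$-triangle $A\to B\to C\dashrightarrow$ with $A,B\in\V$ produces
\[
\EE(U,B)\longrightarrow \EE(U,C)\longrightarrow \EE^2(U,A),
\]
whose outer terms vanish, forcing $C\in\U^{\bot_1}=\V$.

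For (ii) $\Rightarrow$ (i): fix $U\in\U$ and $V\in\V$. Using enough projectives, choose an $\EE$-triangle $U'\to P\to U\dashrightarrow$ with $P\in\mathcal P$. Since $P\in\mathcal P\subseteq\U$ (Lemma \ref{L1}(d)) and $U\in\U$, condition (ii) yields $U'\in\U$. Applying $\EE(-,V)$ gives
\[
\EE(U',V)\longrightarrow \EE^2(U,V)\longrightarrow \EE^2(P,V)=0,
\]
and the leftmost term vanishes because $U'\in\U$ and $V\in\V$, so $\EE^2(U,V)=0$. The implication (iii) $\Rightarrow$ (i) is dual: take an $\EE$-triangle $V\to I\to V'\dashrightarrow$ with $I\in\mathcal I$; by (iii) and Lemma \ref{L1}(d) we get $V'\in\V$, and the exact sequence
\[
\EE(U,V')\longrightarrow \EE^2(U,V)\longrightarrow \EE^2(U,I)=0
\]
together with $\EE(U,V')=0$ yields $\EE^2(U,V)=0$.

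There is no genuine obstacle here; the proof is essentially bookkeeping. The only point that requires a moment's care is to make sure that, when one constructs syzygies/cosyzygies in order to shift $\EE^2$ down to $\EE^1$, the resulting object still lies in $\U$ (resp. $\V$)—and this is precisely where assumption (ii) (resp. (iii)) is invoked, via the inclusions $\mathcal P\subseteq\U$ and $\mathcal I\subseteq\V$ from Lemma \ref{L1}(d). Since both (ii) $\Rightarrow$ (i) and (iii) $\Rightarrow$ (i) hold, and (i) implies both (ii) and (iii), the three conditions are equivalent.
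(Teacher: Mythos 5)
Your proof is correct and follows essentially the same route as the paper: you reduce (i)$\Leftrightarrow$(ii) to the long exact sequence in $\EE^i$ plus the inclusion $\mathcal P\subseteq\U$, and handle (i)$\Leftrightarrow$(iii) by the dual argument, exactly as the paper does (the paper only writes out (i)$\Leftrightarrow$(ii) and defers to duality).
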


\begin{proof}
We show that (i)$\Leftrightarrow$(ii), (i)$\Leftrightarrow$(iii) is by dual.

(i)$\Rightarrow$(ii): For any $\EE$-triangle $A\to B\to C\dashrightarrow$ with $B,C\in \U$, we have an exact sequence:
$$0=\EE(B,V)\to\EE(A,V)\to \EE^2(C,V)=0$$
for any $V\in \V$, which implies that $\EE(A,V)=0$. By Lemma \ref{L1}, we obtain that $A\in \U$.

(ii)$\Rightarrow$(i): Let $U\in \U$ and $V\in \V$. Since $\B$ has enough projectives, $U$ admits an $\EE$-triangle $U'\to P\to U\dashrightarrow$ with $P\in \mathcal P$. By Lemma \ref{L1}, $P\in \U$, hence $U'\in \U$. Then we have an exact sequence:
$$0=\EE(U',V)\to\EE^2(U,V)\to \EE^2(P,V)=0$$
which implies $\EE^2(U,V)=0$.
\end{proof}

Let $(\U_1,\V_1)$, $(\U_2,\V_2)$ be two cotorsion pairs. By Lemma \ref{L1}, we can find that
$$\U_1\subseteq \U_2\Leftrightarrow \V_2\subseteq \V_1\Leftrightarrow\EE(\U_1,\V_2)=0.$$

\begin{defn}
A pair of cotorsion pairs $((\U_1,\V_1),(\U_2,\V_2))$ is called a \emph{twin cotorsion pair} if $\U_1\subseteq \U_2$.
\end{defn}


In the rest of this paper, let $((\X,\V),(\U,\Y))$ be a twin cotorsion pair. For convenience, we put $\W:=\X\cap\Y$.

Denote by $[\W](A,B)$ the subgroup of $\Hom_{\B}(A,B)$ consisting of the morphisms $f$ factoring through objects in $\W$. We denote by $\B/[\W]$ (or $\overline \B$ for short) the category which has the same objects as $\B$, and
$$\Hom_{\oB}(A,B)=\Hom_{\B}(A,B)/[\W](A,B)$$
for any $A,B\in \B$. For any morphism $f\in \Hom_{\B}(A,B)$, we denote its image in $\Hom_{\oB}(A,B)$ by $\overline f$.

\begin{lem}\label{iso}
If $\X\cap \V=\U\cap \Y$, then $\Hom_{\oB}(\U,\Y)=0$ and $\Hom_{\oB}(\X,\V)=0$.
\end{lem}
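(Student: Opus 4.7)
The plan is to first identify $\W$ with both $\X\cap\V$ and $\U\cap\Y$ under the hypothesis, and then for each vanishing statement build a $\W$-valued factorization using one half of the appropriate cotorsion pair. Since $((\X,\V),(\U,\Y))$ is a twin cotorsion pair, $\X\subseteq\U$ and, by Lemma \ref{L1}, $\Y\subseteq\V$. Hence $\W=\X\cap\Y$ is already contained in both $\X\cap\V$ and $\U\cap\Y$. Conversely, the assumption $\X\cap\V=\U\cap\Y$ forces every object of this common subcategory to lie simultaneously in $\X$ (from $\X\cap\V$) and in $\Y$ (from $\U\cap\Y$), hence in $\W$. So $\W=\X\cap\V=\U\cap\Y$; this identification is what will let me recognize the approximations below as lying in $\W$.

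For $\Hom_{\oB}(\U,\Y)=0$, I would take any $f\colon U\to Y$ with $U\in\U$, $Y\in\Y$, and apply the second half of the cotorsion pair $(\U,\Y)$ to $U$ to obtain an $\EE$-triangle
$$U\to Y_U\to U_U\dashrightarrow$$
with $Y_U\in\Y$ and $U_U\in\U$. The key step is to see that $Y_U$ already lies in $\U$: applying $\Hom_\B(-,Y')$ for any $Y'\in\Y$ and using Definition \ref{dein}(5) produces the exact sequence $\EE(U_U,Y')\to\EE(Y_U,Y')\to\EE(U,Y')$, whose outer terms both vanish since $\EE(\U,\Y)=0$. Hence $\EE(Y_U,\Y)=0$, and Lemma \ref{L1}(b) gives $Y_U\in{}^{\bot_1}\Y=\U$, so $Y_U\in\U\cap\Y=\W$. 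To factor $f$ through $Y_U$, apply $\Hom_\B(-,Y)$ to the same $\EE$-triangle: since $\EE(U_U,Y)=0$, the map $\Hom_\B(Y_U,Y)\to\Hom_\B(U,Y)$ is surjective, and a preimage of $f$ furnishes a factorization $U\to Y_U\to Y$ through $\W$.

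The vanishing $\Hom_{\oB}(\X,\V)=0$ is proved by the formally dual argument: apply the first half of the cotorsion pair $(\X,\V)$ to $V\in\V$ to obtain an $\EE$-triangle $V^V\to X^V\to V\dashrightarrow$ with $X^V\in\X$ and $V^V\in\V$; the analogous long exact sequence shows $X^V\in\X^{\bot_1}=\V$, hence $X^V\in\X\cap\V=\W$; then any $h\colon X\to V$ lifts through $X^V$ because $\EE(X,V^V)=0$. I do not anticipate a serious obstacle here. The one subtle point is noticing that the hypothesis is used \emph{only} to reidentify $\U\cap\Y$ and $\X\cap\V$ with $\W$; once that reidentification is in hand, everything else is a routine chase of the long exact sequences of Definition \ref{dein}(5) against the characterizations of $\U$ and $\V$ in Lemma \ref{L1}.
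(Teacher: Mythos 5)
Your proof is correct and uses the same overall strategy as the paper — identify $\W$ with $\X\cap\V=\U\cap\Y$, produce a $\W$-approximation via the cotorsion pair, and lift/extend the given morphism using $\Ext$-vanishing — but you make a dual choice of which object to approximate. The paper approximates the \emph{target} $Y\in\Y$ by a deflation $U'\to Y$ with $U'\in\U\cap\Y=\W$ (taking $Y'\to U'\to Y\dashrightarrow$ from the cotorsion pair $(\U,\Y)$ and noting $U'\in\Y$ by extension-closedness of $\Y$), then lifts $u$ along $U'\to Y$ using $\EE(U,Y')=0$. You instead approximate the \emph{source} $U\in\U$ by an inflation $U\to Y_U$ with $Y_U\in\W$ and extend $f$ along it. Both are valid; yours could be shortened by invoking Lemma \ref{L1}(c) (closure of $\U$ under extensions applied to $U\to Y_U\to U_U\dashrightarrow$ with $U,U_U\in\U$) rather than re-deriving $Y_U\in\U$ from the long exact sequence, which amounts to reproving that closure property. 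Spelling out the identification $\W=\X\cap\V=\U\cap\Y$, which the paper asserts without comment, is a useful addition.
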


\begin{proof}
If $\X\cap \V=\U\cap \Y$, then $\X\cap \V=\W=\U\cap \Y$. Let $u:U\to Y$ be a morphism such that $U\in \U$ and $Y\in \Y$. $Y$ admits an $\EE$-triangle $Y'\to U'\xrightarrow{y} Y\dashrightarrow$ where $U'\in \Y\cap \U=\W$ and $Y'\in \Y$. Since $\EE(U,Y')=0$, there is a morphism $u':U\to U'$ such that $yu'=u$. Hence $\overline u=0$,  which implies $\Hom_{\oB}(\U,\Y)=0$.

Dually, we can show that $\Hom_{\oB}(\X,\V)=0$.
\end{proof}

Let
$$\s_L=\{B\in \B \text{ }|\text{ } \exists \text{ }\EE\text{-triangle } Y\to X\to B\dashrightarrow \text{ with }X\in \X\text{ and }Y\in \Y\},$$
$$\s_R=\{B\in \B \text{ }|\text{ } \exists \text{ }\EE\text{-triangle } B\to Y'\to X'\dashrightarrow \text{ with }X'\in \X\text{ and }Y'\in \Y\}.$$

\begin{lem}\label{summands}
If $\X\cap \V=\U\cap \Y$, then $\s_L$ and $\s_R$ are closed under direct summands.
\end{lem}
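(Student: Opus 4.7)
The plan is to prove the statement for $\s_L$ and note that the case of $\s_R$ is analogous by duality (replacing pullbacks by pushouts). Under the hypothesis $\X\cap\V=\U\cap\Y$, the same computation as in the proof of Lemma \ref{iso} gives $\X\cap\V=\W=\U\cap\Y$, which I will use implicitly throughout.

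The key reduction is the following Schanuel-type observation: if $B\in\s_L$ and $V\to X^*\to B\dashrightarrow$ is any $(\X,\V)$-approximation of $B$ (which exists because $(\X,\V)$ is a cotorsion pair), then automatically $V\in\Y$. Granting this, closure under summands is immediate: for $B_1\oplus B_2\in\s_L$, choose $(\X,\V)$-approximations $V_i\to X_i\to B_i\dashrightarrow$ with $X_i\in\X$ and $V_i\in\V$, and direct-sum them to obtain a $(\X,\V)$-approximation $V_1\oplus V_2\to X_1\oplus X_2\to B_1\oplus B_2\dashrightarrow$ of $B$; the observation then gives $V_1\oplus V_2\in\Y$, and since $\Y$ is closed under direct summands by the very definition of a cotorsion pair, $V_1\in\Y$, so the $\EE$-triangle $V_1\to X_1\to B_1\dashrightarrow$ witnesses $B_1\in\s_L$.

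To prove the Schanuel-type observation, I would start from the two $\EE$-triangles
$$Y\to X\xrightarrow{\pi}B\dashrightarrow,\qquad V\to X^*\xrightarrow{\pi^*}B\dashrightarrow$$
with $X,X^*\in\X$, $Y\in\Y$ and $V\in\V$, and form the pullback of the two deflations $\pi$ and $\pi^*$ using the standard pullback construction in extriangulated categories from \cite{NP}. This produces an object $Q$ fitting into two compatible $\EE$-triangles
$$V\to Q\to X\dashrightarrow\xi,\qquad Y\to Q\to X^*\dashrightarrow\eta.$$
Now $\xi\in\EE(X,V)=0$ since $(\X,\V)$ is a cotorsion pair, so the first triangle splits and $Q\cong V\oplus X$. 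Substituting this into the second triangle, $\eta\in\EE(X^*,Y)=0$ because $X^*\in\X\subseteq\U$ and $(\U,\Y)$ is a cotorsion pair, so the second triangle also splits and yields an isomorphism $V\oplus X\cong Y\oplus X^*$.

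At this point I invoke the Krull-Schmidt hypothesis. Any indecomposable summand $M$ of $V$ is an indecomposable summand of $V\oplus X\cong Y\oplus X^*$, so by uniqueness of Krull-Schmidt decompositions $M$ is a summand of $Y$ or of $X^*$. In the first case $M\in\Y$ directly; in the second case $M\in\X$ and $M\in\V$ (as a summand of $V$), so $M\in\X\cap\V=\W\subseteq\Y$. Hence every indecomposable summand of $V$ lies in $\Y$, and $V\in\Y$ by Lemma \ref{L1}(c). The only delicate step is the extriangulated pullback construction and the identification of the two $\EE$-triangles through $Q$; once these are in place, the splittings and the Krull-Schmidt bookkeeping are routine. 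The proof for $\s_R$ is entirely dual, using $(\U,\Y)$-approximations, the pushout of the two inflations out of $B$, and the same splitting-then-Krull-Schmidt reasoning, to conclude that the cokernel $U$ of a $(\U,\Y)$-approximation of any $B\in\s_R$ automatically lies in $\X$.
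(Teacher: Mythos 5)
Your proof is correct, and it takes a genuinely different route from the paper's. The paper fixes the witness $\EE$-triangle for the direct sum $A_1\oplus A_2\in\s_R$, takes $(\U,\Y)$-approximations of each summand $A_i$, compares the two via a chain of morphisms of $\EE$-triangles, and then uses \cite[Corollary 3.5]{NP} (a homotopy/factorization lemma for morphisms of $\EE$-triangles) together with Lemma \ref{iso} to show $U_1\oplus U_2$ is a retract of an object of $\X$, so $U_i\in\X$. You instead isolate a clean Schanuel-type statement: for $B\in\s_L$, the cocone $V$ of any $(\X,\V)$-approximation $V\to X^*\to B\dashrightarrow$ lies in $\Y$. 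Your proof of this via the extriangulated homotopy pullback is sound: pulling $\delta_1$ back along $\pi^*$ produces $Y\to Q\to X^*\dashrightarrow\eta$ with $\eta\in\EE(X^*,Y)=0$ (using $\X\subseteq\U$), and symmetrically pulling $\delta_2$ back along $\pi$ produces $V\to Q'\to X\dashrightarrow\xi$ with $\xi\in\EE(X,V)=0$; the two cocones of the same deflation $X^*\oplus X\to B$ are isomorphic, giving $V\oplus X\cong Y\oplus X^*$. The Krull--Schmidt step then correctly pushes each indecomposable summand of $V$ into $\Y$, using $\X\cap\V=\W\subseteq\Y$ for the summands coming from $X^*$; closure of $\Y$ under summands (Lemma \ref{L1}) finishes the reduction. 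Your route avoids invoking \cite[Corollary 3.5]{NP} and Lemma \ref{iso} altogether, at the cost of setting up the pullback machinery \cite[Prop.\ 3.15/3.17]{NP} and checking the identification of the two $\EE$-triangles through $Q$, which you rightly flag as the delicate point. Conceptually your argument is more transparent (it is literally Schanuel's lemma in the extriangulated setting), whereas the paper's is more hands-on with the diagrammatics already in play. Both are valid; I see no gap.
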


\begin{proof}
We show that $\s_R$ is closed under direct summands,  and the other half is by dual.\\
Let $A_1\oplus A_2\in \s_R$. Then it admits an $\EE$-triangle $A_1\oplus A_2\to Y_0\to X_0\dashrightarrow$ with $Y_0\in \Y$ and $X_0\in \X$. For $i\in\{1,2\}$, $A_i$ admits an $\EE$-triangle $A_i \to Y_i\to U_i\dashrightarrow$ with $Y_i\in \Y$ and $U_i\in \U$. Hence we have the following commutative diagrams
$$\xymatrix@C=0.8cm@R0.7cm{
A_1\oplus A_2 \ar[r] \ar@{=}[d] &Y_1\oplus Y_2 \ar[r] \ar[d] &U_1\oplus U_2 \ar[d]^{\alpha} \ar@{-->}[r] &\\
A_1\oplus A_2 \ar[r] \ar@{=}[d] &Y_0 \ar[r] \ar[d] &X_0 \ar[d]^{\beta} \ar@{-->}[r] &\\
A_1\oplus A_2 \ar[r]  &Y_1\oplus Y_2 \ar[r]  &U_1\oplus U_2  \ar@{-->}[r] &,\\
}\quad  \xymatrix@C=0.8cm@R0.7cm{
A_1\oplus A_2 \ar[r] \ar[d]_0 &Y_1\oplus Y_2 \ar[r] \ar[d] &U_1\oplus U_2 \ar[d]^{1-\beta\alpha} \ar@{-->}[r] &\\
A_1\oplus A_2 \ar[r]  &Y_1\oplus Y_2 \ar[r]  &U_1\oplus U_2  \ar@{-->}[r] &.
}
$$
 By \cite[Corollary 3.5]{NP}, the right diagram implies that $1-\beta\alpha$ factors through $Y_1\oplus Y_2$. This implies $U_1\oplus U_2$ is a direct summand of $X_0\oplus Y_1\oplus Y_2$. By Lemma \ref{iso}, any morphism from $U_1\oplus U_2$ to $Y_1\oplus Y_2$ factors through $\W$, then any indecomposable object of $U_1\oplus U_2$ is isomorphic to an object in $\X$. Hence $U_i\in \X$, which implies $A_i\in \s_R$. Then $\s_R$ is closed under direct summands.
\end{proof}

\begin{rem}
By the proof of Lemma \ref{iso}, if $\X\cap \V=\U\cap \Y$, then
\begin{itemize}
\item[(a)] any object $X\in \X$ admits an $\EE$-triangle $X\to W_X\to X'\dashrightarrow$ with $W_X\in \W$ and $X'\in \X$;
\item[(b)] any object $Y\in \Y$ admits an $\EE$-triangle $Y'\to W_Y\to Y\dashrightarrow$ with $W_Y\in \W$ and $Y'\in \Y$.
\end{itemize}
Hence $\X,\Y\subseteq \s_L\cap\s_R$.
\end{rem}

We also have the following observation.

\begin{lem}\label{cap}
Let $\s$ be an extension closed subcategory of $\B$ such that $(\X,\Y)$ is a cotorsion pair in $\s$.  Then $\X\cap \V=\Y\cap \U$. Moreover, $\s$ is closed under direct summands.
\end{lem}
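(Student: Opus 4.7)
The plan is to dispatch the two claims in order: first establish the equality $\X\cap\V=\Y\cap\U$ by a perpendicularity argument, then use this equality together with the already-proved Lemma~\ref{summands} to deduce that $\s$ is closed under direct summands.

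For the first assertion I would prove both inclusions directly. Take $W\in\X\cap\V$. The twin cotorsion pair assumption $\X\subseteq\U$ gives $W\in\U$, so it suffices to show $W\in\Y$. Since $W\in\V=\X^{\bot_1}$ we have $\EE_{\B}(\X,W)=0$, hence also $\EE|_{\s}(\X,W)=0$; combined with $W\in\X\subseteq\s$, Lemma~\ref{L1}(a) applied to the cotorsion pair $(\X,\Y)$ in $\s$ yields $W\in\Y$. The reverse inclusion is symmetric: for $W\in\Y\cap\U$ we have $W\in\Y\subseteq\V$ automatically, and $\EE_{\B}(W,\Y)=0$ restricted to $\s$ together with $W\in\Y\subseteq\s$ and Lemma~\ref{L1}(b) gives $W\in\X$.

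For the direct summand closure, I would show $\s=\s_L=\s_R$ and then invoke Lemma~\ref{summands}. Every $B\in\s$ lies in $\s_L\cap\s_R$ because the cotorsion pair $(\X,\Y)$ in $\s$ provides the required $\EE$-triangles $Y\to X\to B\dashrightarrow$ and $B\to Y'\to X'\dashrightarrow$ with $X,X'\in\X$ and $Y,Y'\in\Y$. Conversely, given any such witnessing triangle for an object of $\s_L$, its outer terms lie in $\X\cup\Y\subseteq\s$, and extension closedness of $\s$ forces $B\in\s$; the same argument shows $\s_R\subseteq\s$. With the equality $\X\cap\V=\U\cap\Y$ from the first step, Lemma~\ref{summands} says $\s_L$ is closed under direct summands, so $\s$ is as well.

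The proof has no serious obstacle. The only piece of bookkeeping is the convention that ``cotorsion pair in $\s$'' entails $\X,\Y\subseteq\s$, which is precisely what lets us transport the triangles defining $\s_L$ and $\s_R$ into $\s$ via extension closedness and thereby reconcile the twin cotorsion pair structure on $\B$ with the cotorsion pair structure on $\s$.
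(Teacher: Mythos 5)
The first claim is handled correctly, and by a cleaner route than the paper's: you characterize $\Y$ inside $\s$ via Lemma~\ref{L1}(a) and read off $W\in\Y$ directly from $\EE(\X,W)=0$, whereas the paper builds an explicit splitting $\EE$-triangle $Z\to W\to X\dashrightarrow$ with $W\in\W$ and deduces $Z\in\W$ as a summand. Both are fine; yours buys a shorter argument at no cost.

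The second claim, however, has a genuine gap. You assert that $\s_L\subseteq\s$ (and dually $\s_R\subseteq\s$) because ``the outer terms lie in $\X\cup\Y\subseteq\s$ and extension closedness forces $B\in\s$.'' But in the witnessing $\EE$-triangle $Y\to X\to B\dashrightarrow$ for $B\in\s_L$, the outer terms are $Y$ and $B$, and the middle term is $X$. Extension closedness lets you conclude that the \emph{middle} object lies in $\s$ given that the two end objects do; it says nothing about the cone $B$ when $Y$ and $X$ lie in $\s$. Deducing $B\in\s$ from $Y,X\in\s$ is closure under \emph{cones}, which is a strictly stronger hypothesis that this lemma does not assume --- indeed it is imposed separately later (Theorem~\ref{main5}, and established under heredity in Proposition~\ref{P1}). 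Your proposed identity $\s=\s_L=\s_R$ is false in general: Example~\ref{example} exhibits $\s$ extension closed with $(\X,\Y)$ a cotorsion pair in $\s$, yet $\s_L\ne\s_R$, so $\s$ cannot equal both. The paper's proof avoids this trap: it only records the containment $\s\subseteq\s_R\cap\s_L$ (each $B\in\s$ has the two cotorsion-pair $\EE$-triangles in $\s$) and then invokes Lemma~\ref{summands}, which closes $\s_L$ and $\s_R$ under summands via a retract/Krull--Schmidt argument that produces the needed triangles for the summands directly, rather than trying to identify $\s$ with $\s_L$ or $\s_R$. You should replace the faulty step with that argument rather than the equality $\s=\s_L=\s_R$.
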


\begin{proof}
By definition we have $\X\cap \V\supseteq \X\cap \Y$. Let $Z\in \X\cap \V$. Since $\X\subseteq \s$, $(\X,\Y)$ is a cotorsion pair in $\s$, $Z$ admits an $\EE$-triangle $Z\to W\to X\dashrightarrow$ with $X\in \X$ and $W\in \X\cap  \Y=\W$. This $\EE$-triangle splits, which implies $Z$ is a direct summand of $W\in \W$, hence $Z\in \W$. Dually we can show that $\Y\cap \U=\W$.

Since $\s\subseteq \s_R\cap \s_L$, by Lemma \ref{summands}, we get that $\s$ is closed under direct summands.
\end{proof}

\begin{defn}\label{thick}
A subcategory $\M$ of $\B$ is called \emph{a thick subcategory} in $\B$  provided that it is closed under direct summands and for any $\EE$-triangle
$$A\to B\to C\dashrightarrow$$
in $\M$, if any two objects of $A,B$ and $C$ belong to $\M$, then so is the third one.
\end{defn}

\begin{lem}\label{L3}
If $(\X,\Y)$ is a cotorsion pair in a thick subcategory $\s$, then $\s_R=\s_L=\s$.
\end{lem}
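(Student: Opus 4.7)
The plan is to establish the two inclusions $\s\subseteq \s_L\cap \s_R$ and $\s_L\cup \s_R\subseteq \s$ separately, using the two halves of the hypothesis: the existence of approximation triangles (from the cotorsion pair) for the first, and thickness of $\s$ for the second.

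For $\s\subseteq \s_L\cap \s_R$, let $B\in\s$. Since $(\X,\Y)$ is a cotorsion pair in $\s$ (equipped with its inherited extriangulated structure from $\B$), the definition of a cotorsion pair supplies two $\EE|_\s$-triangles
\[
Y^B\to X^B\to B\dashrightarrow\quad\text{and}\quad B\to Y_B\to X_B\dashrightarrow
\]
with $X^B,X_B\in\X$ and $Y^B,Y_B\in\Y$. These are $\EE$-triangles in $\B$ with the required endpoints, and they witness $B\in\s_L$ and $B\in\s_R$ respectively.

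For the reverse inclusion, take $B\in\s_L$, so there is an $\EE$-triangle $Y\to X\to B\dashrightarrow$ with $X\in\X\subseteq\s$ and $Y\in\Y\subseteq\s$. By Definition~\ref{thick}, since two of the three objects of this $\EE$-triangle lie in the thick subcategory $\s$, so does the third, hence $B\in\s$. This gives $\s_L\subseteq\s$, and the argument for $\s_R\subseteq\s$ is identical using an $\EE$-triangle of the form $B\to Y'\to X'\dashrightarrow$.

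There is no real obstacle here; the only point to notice is why thickness (and not merely extension-closedness) is needed. In the $\EE$-triangle $Y\to X\to B\dashrightarrow$, the two objects known to lie in $\s$ are the first and the second, and we want to conclude that the third object lies in $\s$; this is precisely the two-out-of-three condition in the definition of a thick subcategory, and it is strictly stronger than extension-closedness, which would only let us fill in the middle term.
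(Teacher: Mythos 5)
Your proof is correct and follows the same route the paper takes: the cotorsion pair supplies the two approximation $\EE$-triangles witnessing $\s\subseteq\s_L\cap\s_R$, and thickness of $\s$ together with $\X,\Y\subseteq\s$ gives $\s_L,\s_R\subseteq\s$. The paper compresses this to one sentence, while you spell out the triangles and correctly note that it is the two-out-of-three condition (not mere extension-closedness) that is needed for the reverse inclusion.
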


\begin{proof}
By definition, we have $\s\subseteq \s_R\cap \s_L$. But on the other hand, since $\s$ is thick and $\X\subseteq \s,\Y\subseteq \s$, by definition we have $\s_R\subseteq \s$ and $\s_L\subseteq \s$. Hence $\s_R=\s_L=\s$.
\end{proof}

\begin{rem}\label{Hov}
If $((\X,\V),(\U,\Y))$ satisfies the condition $\s_R=\s_L$, then it is called a \emph{Hovey twin cotorsion pair} (see \cite[Definition 5.1]{NP} for more details). Hence if $(\X,\Y)$ satisfies the condition in Lemma \ref{L3}, $((\X,\V),(\U,\Y))$ is a Hovey twin cotorsion pair. Note that if $((\X,\V),(\U,\Y))$ is Hovey, we always have $\X\cap\V=\U\cap \Y$ by \cite[Remark 4.17 , Remark 5.2]{NP}.
\end{rem}

\section{Localizations and quotient subcategories}

In an $\EE$-triangle $A\xrightarrow{~x~}B\xrightarrow{~y~}C\dashrightarrow$ , $x$ is called an {\it inflation} and $y$ is called a {\it deflation}. From now on, we also assume $\B$ satisfies condition (WIC) (\cite[Condition 5.8]{NP}):

\begin{itemize}
\item If we have a deflation $h: A\xrightarrow{~f~} B\xrightarrow{~g~} C$, then $g$ is also a deflation.
\item If we have an inflation $h: A\xrightarrow{~f~} B\xrightarrow{~g~} C$, then $f$ is also an inflation.
\end{itemize}

Note that this condition automatically holds on triangulated categories and Krull-Schmidt exact categories.

Under this condition,  we can show the following lemma.

\begin{lem}\label{deflation}
Let $\C$ be a subcategory of $\B$ which is closed under direct summands.
\begin{itemize}
\item[{\rm (a)}] If $f:C\to B$ is a right $\C$-approximation of $B$ and a deflation, then there is a deflation $f_1:C_1\to B$ which is a minimal right $\C$-approximation.
\item[{\rm (b)}] If $f':B\to C'$ is a left $\C$-approximation of $B$ and an inflation, then there is an inflation $f_1':B\to C_1'$ which is a minimal left $\C$-approximation.
\end{itemize}
\end{lem}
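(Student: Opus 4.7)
The plan is to extract the minimal approximation as a direct summand of the given one using the Krull--Schmidt property of $\B$, and then apply condition (WIC) to transfer the deflation (resp.\ inflation) property from $f$ (resp.\ $f'$) to the refined map.

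For part (a), I would begin with the following standard fact about Krull--Schmidt additive categories: any morphism $f\colon C\to B$ with $C\in\B$ admits a decomposition of its source $C=C_1\oplus C_2$ such that, writing $f=(f_1,0)$ with respect to this splitting, the restriction $f_1\colon C_1\to B$ is right minimal in the usual sense (every $g\colon C_1\to C_1$ with $f_1 g=f_1$ is an automorphism). The construction is to split off, one at a time, indecomposable summands on which $f$ vanishes; since $\B$ is Hom-finite and Krull--Schmidt so that indecomposables have local endomorphism rings, this procedure terminates. Now $C_1$ is a direct summand of $C\in\C$, hence $C_1\in\C$ because $\C$ is closed under direct summands. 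Moreover $f_1$ is still a right $\C$-approximation: given any $h\colon C'\to B$ with $C'\in\C$, choose a lift $k\colon C'\to C$ with $fk=h$; then composing $k$ with the projection $\pi\colon C_1\oplus C_2\to C_1$ gives $f_1(\pi k)=f\cdot\iota_1\pi k=f k=h$ using the shape $f=(f_1,0)$. So $f_1$ is a minimal right $\C$-approximation.

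It remains to verify that $f_1$ is a deflation. Consider the factorization
\[
C=C_1\oplus C_2\ \xrightarrow{\ \pi\ }\ C_1\ \xrightarrow{\ f_1\ }\ B,
\]
where $\pi$ is the canonical projection; by construction $f_1\circ\pi=f$. Since $f$ is a deflation by hypothesis, condition (WIC) applied to this composition forces the second map $f_1$ to be a deflation as well. This completes part (a). Part (b) follows by the obvious dualization: decompose $C'=C_1'\oplus C_2'$ so that $f'=(f_1',0)^{T}$ with $f_1'\colon B\to C_1'$ left minimal, verify that $f_1'$ is a left $\C$-approximation (using that any cotest map factors through $f'$, and then composing with the projection $C_1'\oplus C_2'\to C_1'$), and apply the inflation half of (WIC) to the factorization $f'=\iota\circ f_1'$, where $\iota\colon C_1'\to C_1'\oplus C_2'$ is the canonical inclusion.

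The main obstacle is the first step: ensuring that the minimal refinement really can be realized as a direct summand rather than as a mere quotient. This is a folklore lemma about Krull--Schmidt additive categories (cf.\ Auslander--Smal\o) that relies crucially on idempotent-completeness and on endomorphism rings of indecomposables being local, both of which are guaranteed here by the Krull--Schmidt, Hom-finite hypothesis on $\B$. Once this refinement is in hand, the deflation/inflation transfer is an immediate one-line application of (WIC), and the approximation property of the refined map is transparent from the matrix form $f=(f_1,0)$.
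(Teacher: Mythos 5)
Your proof is correct and follows essentially the same route as the paper's: decompose $C=C_1\oplus C_2$ via Krull--Schmidt so that $f=(f_1,0)$ with $f_1$ right minimal, note $C_1\in\C$, verify the approximation property from the matrix form, and use (WIC) on the factorization $f=f_1\circ\pi$ to transfer the deflation property. The only cosmetic difference is the order of the last two verifications and a slightly roundabout middle step in your computation $f_1(\pi k)=h$ (inserting $\iota_1$), which the paper avoids by writing $g=f\svecv{h_1}{h_2}=f_1h_1$ directly.
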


\begin{proof}
We only need to prove (a). Dually we can show (b).

If $f:C\to B$ is a deflation, since $\B$ is Krull-Schmidt, there exists a decomposition $C=C_1\oplus C_2$ such that $f=\svech{f_1}{0}\colon C_1\oplus C_2\to B$ where $f_1$ is right minimal. Since $\svech{f_1}{0}=f_1\circ \svech{1}{0}$, by condition (WIC), we get that $f_1$ is also a deflation. Let $C_0$ be any object in $\C$ and $g\in \Hom_\B(C_0,B)$. Since $f$ is a right $\C$-approximation of $B$, there is a morphism $\svecv{h_1}{h_2}:C_0\to C_1\oplus C_2$ such that $g=f\circ \svecv{h_1}{h_2}=f_1h_1$. Hence $f_1$ is a right $\C$-approximation of $B$.
\end{proof}

\subsection{Extriangulated quotient categories}
In the rest of the paper, let $\Z=\U\cap \V$. Since $\U$ and $\V$ are closed under extensions, $\Z$ is also closed under extensions, hence it is an extriangulated subcategory of $\B$.
Note that $\EE(\W,\Z)=0=\EE(\Z,\W)$, by definition, $\W$ is a subcategory of projective-injective objects in $\Z$. Then by \cite[Proposition 3.30]{NP},  $\Z/[\W]$ has an extriangulated structure induced by $\Z$.


Moreover, we have the following proposition.

\begin{prop}\label{Fro}
If $(\X,\V)$ and $(\U,\Y)$ are hereditary cotorsion pairs, then $\Z$ is a Frobenius subcategory in which $\W$ is the subcategory of enough projective-injective objects, which implies that $\Z/[\W]$ is a triangulated category.
\end{prop}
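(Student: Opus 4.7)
The plan is to establish that $\Z$ is a Frobenius extriangulated category with projective-injective subcategory $\W$, after which the extriangulated version of Happel's theorem (see \cite{NP}) delivers the triangulated structure on $\Z/[\W]$. Two ingredients are already in place from the paragraph preceding the proposition: $\Z$ is extension-closed in $\B$ and therefore extriangulated, and the objects of $\W$ are both projective and injective in $\Z$ because $\EE(\W,\Z)=\EE(\Z,\W)=0$. What remains is to prove (i) that $\Z$ has enough $\W$-deflations and dually enough $\W$-inflations, and (ii) that every projective (respectively injective) object of $\Z$ already belongs to $\W$.

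For (i), given $Z\in\Z$, my strategy is to combine the two cotorsion-pair approximations of $Z$ and glue them by the octahedral axiom. The $(\X,\V)$-approximation produces an $\EE$-triangle $V_1\to X_1\to Z\dashrightarrow$ with $X_1\in\X$ and $V_1\in\V$; in the long exact sequence
$$\EE(X_1,\Y)\to \EE(V_1,\Y)\to \EE^2(Z,\Y)$$
the left-hand term vanishes because $\X\subseteq\U={}^{\bot_1}\Y$, and the right-hand term vanishes because $(\U,\Y)$ is hereditary, so Proposition \ref{L2} gives $\EE^2(\U,\Y)=0$. Hence $V_1\in{}^{\bot_1}\Y=\U$, so $V_1\in\U\cap\V=\Z$. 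To replace $X_1$ by an object of $\W$, I would next apply the $(\U,\Y)$-approximation to $X_1$, obtaining $Y_2\to U_2\xrightarrow{\alpha}X_1\dashrightarrow$ with $U_2\in\U$ and $Y_2\in\Y$, and use both hereditary conditions $\EE^2(\X,\V)=\EE^2(\U,\Y)=0$ together with dimension shifting (available since $\B$ has enough projectives and enough injectives) to force $U_2\in\X\cap\Y=\W$. Finally, axiom $(\mathrm{ET4}^{\mathrm{op}})$ applied to the composable deflations $U_2\xrightarrow{\alpha}X_1\to Z$ yields an $\EE$-triangle $Z'\to U_2\to Z\dashrightarrow$ together with an $\EE$-triangle $Y_2\to Z'\to V_1\dashrightarrow$; extension-closedness of $\U$ and $\V$ then gives $Z'\in\U\cap\V=\Z$. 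Enough $\W$-inflations is handled dually.

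For (ii), if $P\in\Z$ is projective in $\Z$, then the $\EE$-triangle $Z'\to W\to P\dashrightarrow$ from (i) satisfies $\EE(P,Z')=0$ and hence splits, realizing $P$ as a direct summand of some $W\in\W$; since $\X$ and $\Y$ are each closed under direct summands as components of cotorsion pairs, so is $\W$, and therefore $P\in\W$. The injective case is dual. The step I expect to be the main obstacle is the identification $U_2\in\W$ in (i): a naive long-exact-sequence chase reduces the two memberships $U_2\in\X$ and $U_2\in\Y$ to the vanishings $\EE(\Y,\V)=0$ and $\EE(\U,\X)=0$, neither of which is automatic from the hypotheses, so the genuine work is in chaining the two hereditary conditions through higher-$\EE$ shifts to force these vanishings at the level of the particular objects $Y_2$ and $X_1$. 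Once (i) and (ii) are in hand, $\Z$ is Frobenius with $\W$ as projective-injective subcategory, and Happel's theorem triangulates $\Z/[\W]$.
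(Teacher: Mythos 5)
The key observation you are missing is that in the $(\X,\V)$-approximation $V_1\to X_1\to Z\dashrightarrow$ of $Z\in\Z$, the middle term $X_1$ is \emph{already} in $\W$ --- no second approximation or octahedral gluing is needed. Since $Z\in\Z\subseteq\V$ and $V_1\in\V$, closure of $\V$ under extensions forces $X_1\in\V$, whence $X_1\in\X\cap\V=\W$ by Lemma \ref{cap}. Combined with your (correct) deduction that $V_1\in\U\cap\V=\Z$, this single $\EE$-triangle exhibits the required $\W$-deflation, and the dual $(\U,\Y)$-approximation handles the $\W$-inflation. The detour you set up --- approximating $X_1$ by $Y_2\to U_2\to X_1\dashrightarrow$ and trying to force $U_2\in\W$ --- is the step you yourself flag as the obstacle, and rightly so: the chase reduces to $\EE(\Y,\V)=0$ and $\EE(\U,\X)=0$, neither of which is available, and I don't see how the two hereditary conditions can be chained to produce them. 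This part of the argument is a genuine gap rather than a technical detail left to the reader.

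A secondary issue: your route to $V_1\in\U$ goes through $\EE^2(\U,\Y)=0$ via Proposition \ref{L2}, which requires $\B$ to have enough projectives and injectives --- an assumption Proposition \ref{Fro} does not make. The definition of hereditary used in the paper is the direct closure condition (a): $B,C\in\U$ in an $\EE$-triangle $A\to B\to C\dashrightarrow$ implies $A\in\U$. Applying this to $V_1\to X_1\to Z\dashrightarrow$ with $X_1\in\X\subseteq\U$ and $Z\in\U$ gives $V_1\in\U$ immediately, with no higher extensions or dimension shifting. Once you replace the $\EE^2$ detour with the definition, and drop the second approximation in favor of the observation above, the proof collapses to the paper's two-line argument.
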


\begin{proof}
According to \cite[Corollary 7.4]{NP}, we only need to show that $\W$ is the subcategory of enough projective-injective objects. Let $Z\in \Z$ be any object. It admits an $\EE$-triangle $V\to X\to Z\dashrightarrow$ where $X\in \X$ and $V\in \V$. Since $Z,X\in \U$, by definition, we have $V\in \U\cap \V=\Z$. Moreover, since $V,Z\in \V$, we have $X\in \V$, hence $X\in \X\cap \V=\W$ by Lemma \ref{cap}. Dually we can show that $Z$ admits an $\EE$-triangle $Z\to Y\to U\dashrightarrow$ where $Y\in \W$ and $U\in Z$. Thus $\Z$ is a Frobenius subcategory in which $\W$ is the subcategory of projective-injective objects, which implies that $\Z/[\W]$ is a triangulated category by \cite[Theorem 3.13]{ZZ}.
\end{proof}

If $(\X,\V)$ and $(\U,\Y)$ are hereditary cotorsion pairs, then the triangulated category structure on $\Z/[\W]$ is the following:

the suspension functor
$$\langle1\rangle: A\mapsto A\langle1\rangle,\ a\mapsto a\langle1\rangle$$
and distinguished triangles
$$A\xrightarrow{\overline f} B\xrightarrow{\overline g} C\xrightarrow{\overline h} A\langle 1\rangle$$
are given by the following commutative diagram:
$$\xymatrix@C=0.8cm@R0.7cm{
A \ar[r]^f \ar@{=}[d] &B \ar[r]^g \ar[d] &C \ar[d]^h \ar@{-->}[r] &\\
A \ar[r] \ar[d]_a &W_A \ar[r] \ar[d] &A\langle 1\rangle \ar[d]^{a\langle 1\rangle} \ar@{-->}[r] &\\
D \ar[r] &W_D \ar[r] &D\langle 1\rangle \ar@{-->}[r] &
}
$$
with $W_A,W_D\in \W$. Here $\xymatrix@C=0.7cm@R0.7cm{A \ar[r]^f  &B \ar[r]^g  &C \ar@{-->}[r] &}$ is an arbitrary  $\EE$-triangle in $\Z$ and $a:A\to D$ is an arbitrary morphism in $\Z$.

\subsection{Hovey twin cotorsion pairs}

In \cite[Section 5]{NP}, Nakaoka-Palu showed that if we have a Hovey twin cotorsion pair $((\X,\V),(\U,\Y))$, then we can get an equivalence between $\Z/[\W]$ and a localization with respect to $((\X,\V),(\U,\Y))$. By Lemma \ref{L3}, we can find that the torsion pairs in \cite[Theorem 1.1]{IYa2} induces a Hovey twin cotorsion pair, and this theorem becomes a special case of the results in \cite[Section 5]{NP}.

A question is that given a Hovey twin cotorsion pair $((\X,\V),(\U,\Y))$, can we find an extension closed subcategory $\s$ in which $(\X,\Y)$ is a cotorsion pair. We can answer this question by the following proposition.

\begin{prop}\label{Hot}
Let $((\X,\V),(\U,\Y))$ be a Hovey twin cotorsion pair (which means $\s_L=\s_R$). Then $\s:=\s_L(=\s_R)$ is a thick subcategory in which $(\X,\Y)$ is a cotorsion pair.
\end{prop}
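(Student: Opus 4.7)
The plan is to verify in sequence that (a) $\s$ is closed under extensions, (b) $(\X,\Y)$ is a cotorsion pair in $\s$, and (c) $\s$ is closed under direct summands and satisfies the $2$-out-of-$3$ property for $\EE$-triangles. Parts (b) and (c) will follow quickly from (a) together with the cotorsion data already supplied, so the main work is in (a).

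For (b), once $\s$ is known to be extension closed it inherits an extriangulated structure from $\B$. Since $\X\subseteq\U$ and $\Y\subseteq\V$ we have $\EE(\X,\Y)\subseteq\EE(\X,\V)=0$, while $\X,\Y\subseteq\s$ follows from the remark after Lemma \ref{summands}. For any $B\in\s=\s_L=\s_R$, the two $\EE$-triangles required for a cotorsion pair are furnished directly by the definitions of $\s_L$ and $\s_R$. For (c), closure under direct summands is Lemma \ref{summands}, available because $\X\cap\V=\U\cap\Y$ under the Hovey assumption (Remark \ref{Hov}); the $2$-out-of-$3$ property then reduces, by short ET$4$ combinations with the $\s_L$- and $\s_R$-resolutions of the members already known to lie in $\s$, back to (a).

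The heart of the proof is (a). Given $A\to B\to C\dashrightarrow$ with $A,C\in\s$, the strategy is first to produce an intermediate $B_1\in\s$, and then to extract $B\in\s$ from it. Invoking $A\in\s_R$, I would pick $A\to Y_A'\to X_A'\dashrightarrow$ with $Y_A'\in\Y$, $X_A'\in\X$, and push the inflation $A\to B$ out along $A\to Y_A'$, producing $\EE$-triangles $Y_A'\to B_1\to C\dashrightarrow$ and $B\to B_1\to X_A'\dashrightarrow$. To place $B_1$ in $\s_L$, pull $B_1\to C$ back along $X_C\to C$ coming from a $\s_L$-resolution $Y_C\to X_C\to C\dashrightarrow$ of $C$, which produces $Y_A'\to B_3\to X_C\dashrightarrow$ and $Y_C\to B_3\to B_1\dashrightarrow$. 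The first splits because $\EE(X_C,Y_A')\subseteq\EE(\X,\V)=0$, so $B_3\cong Y_A'\oplus X_C$ and the second reads $Y_C\to Y_A'\oplus X_C\to B_1\dashrightarrow$. Now resolve $Y_A'$ by $Y''\to X''\to Y_A'\dashrightarrow$ (available since $\Y\subseteq\s_L$) and add the trivial $\EE$-triangle on $X_C$ to form a deflation $X''\oplus X_C\to Y_A'\oplus X_C$. The octahedral (ET$4$) axiom applied to the composition of deflations $X''\oplus X_C\to Y_A'\oplus X_C\to B_1$ delivers an $\EE$-triangle $K\to X''\oplus X_C\to B_1\dashrightarrow$ together with $Y''\to K\to Y_C\dashrightarrow$; since $\Y$ is extension closed, $K\in\Y$, and hence $B_1\in\s_L\subseteq\s$. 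Finally, apply ET$4$ to $B\to B_1\to X_A'\dashrightarrow$ together with a $\s_R$-resolution $B_1\to Y_{B_1}\to X_{B_1}\dashrightarrow$ of $B_1$: this yields $B\to Y_{B_1}\to E\dashrightarrow$ and $X_A'\to E\to X_{B_1}\dashrightarrow$, and the extension closure of $\X$ (Lemma \ref{L1}(c)) forces $E\in\X$, whence $B\in\s_R\subseteq\s$.

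The main obstacle is engineering the diagram chase in (a) so that both sides of the final $\EE$-triangle land in the correct subcategories. The crucial trick is that the identity $\EE(\X,\V)=0$ splits an otherwise awkward $\EE$-triangle into a direct sum, converting a genuine two-step extension problem into two independent ones that can then be recombined by the octahedral axiom to produce resolutions whose outer sides lie in $\X$ and $\Y$ respectively.
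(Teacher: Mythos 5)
Your argument for extension closure (your part (a)) is essentially the same as the paper's step (2): pushout $A\to B\to C$ along the $\s_R$-resolution of $A$ to get an intermediate object, pull back along the $\s_L$-resolution of $C$ (splitting via $\EE(\X,\V)=0$), then compose deflations with an $\s_L$-resolution of $Y_A'$ to land the intermediate object in $\s_L$, and finally compose inflations with an $\s_R$-resolution of that object to place $B$ in $\s_R$. The paper happens to use a $\W$-resolution of $Y_A$ (via Lemma \ref{iso}) where you use a general $\s_L$-resolution, but since $\W\subseteq\X$ this is an inessential variant. Your parts (b) and the direct-summand claim in (c) are also handled exactly as in the paper (Lemma \ref{summands}, Remark \ref{Hov}).

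The one place where you undersell the work is the remaining cases of the two-out-of-three property. You write that these "reduce \ldots back to (a)," but that is not literally true: given $A,B\in\s$ and $A\to B\to C\dashrightarrow$, a pushout along $A\to Y_A$ and extension closure do give you $D\in\s$ with an $\EE$-triangle $Y_A\to D\to C\dashrightarrow$, but extension closure then says nothing about $C$ (you have the two \emph{inner} terms, not the two \emph{outer} ones). You still need a genuine extra step: take an $\s_L$-resolution $Y^D\to X^D\to D\dashrightarrow$, compose the deflations $X^D\to D\to C$, and use extension closure of $\Y$ on the resulting $\EE$-triangle of cocones to produce $Y'\to X^D\to C\dashrightarrow$ with $Y'\in\Y$, whence $C\in\s_L$. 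The dual argument handles $B,C\in\s\Rightarrow A\in\s$. Your phrase "short ET$4$ combinations with the $\s_L$- and $\s_R$-resolutions" does gesture at this mechanism, but the reduction is not \emph{to} (a); it \emph{uses} (a) and then requires one further resolution-plus-composition, exactly as the paper's steps (3) and (4) make explicit. This should be spelled out rather than asserted as a reduction.
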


\begin{proof}
We only need to show that $\s$ is a thick subcategory, then by definition we know that $(\X,\Y)$ is a cotorsion pair in $\s$.

(1) By Lemma \ref{summands}, $\s$ is closed under direct summands.

Let $A\xrightarrow{x} B\xrightarrow{y} C\dashrightarrow$ be an $\EE$-triangle.

(2) If $A,C\in \s$, we show that $B\in \s$. $A$ admits an $\EE$-triangle $A\to Y_A\to X_A\dashrightarrow$ with $Y_A\in \Y$ and $X_A\in \X$, and $C$ admits an $\EE$-triangle $Y^C\to X^C\to C\dashrightarrow$ with $Y^C\in \Y$ and $X^C\in \X$, then we have the following commutative diagrams
$$\xymatrix@C=0.8cm@R0.7cm{
A \ar[r] \ar[d] &B \ar[r] \ar[d] &C \ar@{=}[d] \ar@{-->}[r] &\\
Y_A \ar[r] \ar[d] &D \ar[r] \ar[d] &C \ar@{-->}[r] &,\\
X_A \ar@{=}[r] \ar@{-->}[d] &X_A \ar@{-->}[d]\\
&&\\
}\quad \xymatrix@C=0.8cm@R0.7cm{
&Y^C \ar@{=}[r] \ar[d] &Y^C \ar[d]\\
Y_A \ar[r] \ar@{=}[d] &Y_A\oplus X^C \ar[r] \ar[d] &X^C \ar@{-->}[r] \ar[d] &\\
Y_A \ar[r] &D \ar[r] \ar@{-->}[d] &C \ar@{-->}[r] \ar@{-->}[d] &.\\
&&
}
$$
Since $((\X,\V),(\U,\Y))$ is a Hovey twin cotorsion pair, we always have $\X\cap\V=\W=\U\cap\Y$ by Remark \ref{Hov}, then $Y_A$ admits an $\EE$-triangle $Y_1\to W_1\to Y_A\dashrightarrow$ with $Y_1\in \Y$ and $W_1\in \W$  by the proof of Lemma \ref{iso}. We have the following commutative diagram
$$\xymatrix@C=0.8cm@R0.7cm{
Y_1 \ar@{=}[r] \ar[d] &Y_1 \ar[d]\\
Y_2 \ar[r] \ar[d] &W_1\oplus X^C \ar[r] \ar[d] &D \ar@{=}[d] \ar@{-->}[r] &\\
Y^C \ar[r] \ar@{-->}[d] &Y_A\oplus X^C \ar[r] \ar@{-->}[d] &D \ar@{-->}[r] &\\
&&
}
$$
with $Y_2\in \Y$, hence $D\in \s$. Then $D$ admits an $\EE$-triangle $D\to Y_D\to X_D\dashrightarrow$ with $Y_D\in \Y$ and $X_D\in \X$. We have the following commutative diagrams:
$$\xymatrix@C=0.8cm@R0.7cm{
B \ar[r] \ar@{=}[d] &D \ar[r] \ar[d] &X_A \ar@{-->}[r] \ar[d] &\\
B \ar[r] &Y_D \ar[r] \ar[d] &X' \ar@{-->}[r] \ar[d] &\\
&X_D \ar@{=}[r] \ar@{-->}[d] &X_D \ar@{-->}[d]\\
&&
}
$$
with $X'\in \X$, hence $B\in \s$.

(3) If $A,B\in \s$, we show that $C\in \s$. In the previous argument, we know that $C$ admits  a commutative diagram
$$\xymatrix@C=0.8cm@R0.7cm{
A \ar[r] \ar[d] &B \ar[r] \ar[d] &C \ar@{=}[d] \ar@{-->}[r] &\\
Y_A \ar[r] \ar[d] &D \ar[r] \ar[d] &C \ar@{-->}[r] &.\\
X_A \ar@{=}[r] \ar@{-->}[d] &X_A \ar@{-->}[d]\\
&&
}
$$
Since $B\in \s$ and $\s$ is closed under extensions, we get that $D\in \s$. Then $D$ admits an $\EE$-triangle $Y^D\to X^D\to D\dashrightarrow$ with $Y^D\in \Y$ and $X^D\in \X$. We get the following commutative diagram
$$\xymatrix@C=0.8cm@R0.7cm{
Y^D \ar@{=}[r] \ar[d] &Y^D \ar[d]\\
Y' \ar[r] \ar[d] &X^D \ar[r] \ar[d] &C \ar@{=}[d] \ar@{-->}[r] &\\
Y_A \ar[r] \ar@{-->}[d] &D \ar[r] \ar@{-->}[d] &C \ar@{-->}[r] &\\
&&
}
$$
with $Y'\in \Y$, hence $C\in \s$.

(4) Dually we can show that $B,C\in \s$ implies $A\in \s$.

Hence $\s$ is a thick subcategory.
\end{proof}

The following proposition gives a sufficient condition when $((\X,\V),(\U,\Y))$ becomes a Hovey twin cotorsion pair.

\begin{prop}\label{Here}
Let $((\X,\V),(\U,\Y))$ be a twin cotorsion pair. If
\begin{itemize}
\item[\rm (a)] $\X\cap \V=\U\cap \Y$;
\item[\rm (b)] $(\X,\V)$ and $(\U,\Y)$ are hereditary cotorsion pairs,
\end{itemize}
then $((\X,\V),(\U,\Y))$ is a Hovey twin cotorsion pair. Moreover, if $\M$ is an extension closed subcategory in which $(\X,\Y)$ is a cotorsion pair, then $\M=\s_L=\s_R$.
\end{prop}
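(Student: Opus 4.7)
My plan has two parts, matching the two assertions of the proposition.

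\textbf{Hovey condition $\s_L=\s_R$.} By a symmetry argument in the opposite extriangulated category, it suffices to establish $\s_L\subseteq\s_R$. Given $B\in\s_L$, I would fix a defining $\EE$-triangle $Y\to X\to B\dashrightarrow$ with $Y\in\Y$ and $X\in\X$. Since $\X\cap\V=\W$ by assumption (a), the remark immediately following Lemma \ref{iso} supplies an $\EE$-triangle $X\to W_X\to X'\dashrightarrow$ with $W_X\in\W$ and $X'\in\X$. These two $\EE$-triangles are composable at $X$, so the octahedral axiom (ET4) produces an object $E$ and new $\EE$-triangles
$$Y\to W_X\to E\dashrightarrow\quad\text{and}\quad B\to E\to X'\dashrightarrow.$$
Since $Y\in\Y$ and $W_X\in\W\subseteq\Y$, and since the cotorsion pair $(\U,\Y)$ is hereditary by (b), Proposition \ref{L2} forces $E\in\Y$; the second $\EE$-triangle then exhibits $B\in\s_R$. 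The reverse inclusion is entirely dual: starting from a witness $B\to Y_1\to X_1\dashrightarrow$, I would invoke the $\W$-resolution $Y_0\to W_{Y_1}\to Y_1\dashrightarrow$ of $Y_1\in\Y$, apply the dual octahedral axiom, and use hereditariness of $(\X,\V)$ to produce an $\EE$-triangle of the form $Y_0\to F\to B\dashrightarrow$ with $F\in\X$.

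\textbf{The moreover statement.} Suppose $\M$ is extension closed with $(\X,\Y)$ a cotorsion pair in $\M$. For any $B\in\M$, the cotorsion pair $(\X,\Y)$ inside $\M$ immediately furnishes the left and right $\EE$-triangles needed for $B\in\s_L\cap\s_R$. Conversely, given $B\in\s_L$, its defining $\EE$-triangle $Y\to X\to B\dashrightarrow$ has first two terms in $\X\cup\Y\subseteq\M$, so extension closure of $\M$ forces $B\in\M$; dually $\s_R\subseteq\M$. Hence $\M=\s_L=\s_R$.

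\textbf{Where the difficulty lies.} The substantive step is identifying the correct pair of $\EE$-triangles for the octahedral axiom in the Hovey part — pairing the given $\EE$-triangle $Y\to X\to B$ with the $\W$-coresolution from (a) in exactly the right slot, so that extension closure combined with the hereditariness of $(\U,\Y)$ delivers $E\in\Y$. Once this configuration is set up, both the direct and dual inclusions proceed by the same mechanism, and the moreover statement is then a bookkeeping consequence of the definitions of $\s_L$ and $\s_R$ together with the extension closure of $\M$.
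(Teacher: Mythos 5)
Your argument for the Hovey assertion ($\s_L=\s_R$) is correct, and in fact it is a cleaner and more direct route than the paper's. The paper sets $\s:=\s_L\cap\s_R$, proves that $\s$ is a \emph{thick} subcategory (a longer diagram chase, done in steps (1)--(4) of its proof of Proposition \ref{Here}), and then invokes Lemma \ref{L3} together with Remark \ref{Hov}. You instead run ET4 once on the two composable inflations supplied by the defining triangle of $B\in\s_L$ and the $\W$-coresolution of its $\X$-term, and let hereditariness of $(\U,\Y)$ place the cone in $\Y$; the dual argument handles the other inclusion. (One small thing: quote the definition of a hereditary cotorsion pair directly rather than Proposition \ref{L2}, since the latter is stated under an extra hypothesis of enough projectives/injectives that is not assumed here.)

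The ``moreover'' part, however, has a genuine gap. You claim that extension closure of $\M$ yields $B\in\M$ from the $\EE$-triangle $Y\to X\to B\dashrightarrow$ with $Y,X\in\M$. But extension closure (Definition \ref{dein}(4)) concerns the \emph{middle} term: from $A\to B\to C\dashrightarrow$ with $A,C\in\M$, one concludes $B\in\M$. In your triangle $B$ is the third term, so what you would actually need is closure of $\M$ under taking cones, which is \emph{not} part of the hypotheses; indeed, elsewhere the paper adds ``$\s$ is closed under taking cones'' as an explicit extra assumption precisely because it does not follow from extension closure. The correct argument (and the one the paper gives) mirrors the mechanism you already used in the Hovey step: apply ET4 to $Y^S\to X^S\to S\dashrightarrow$ and $X^S\to W_0\to X_0\dashrightarrow$, use hereditariness of $(\U,\Y)$ to force the resulting object $Y$ into $\Y$, extract an $\EE$-triangle $X^S\to S\oplus W_0\to Y\dashrightarrow$, and then conclude $S\oplus W_0\in\M$ by genuine extension closure (its outer terms $X^S$ and $Y$ lie in $\X\cup\Y\subseteq\M$). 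Finally, Lemma \ref{cap} gives closure of $\M$ under direct summands, so $S\in\M$. Your proposal would need to be repaired along these lines.
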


\begin{proof}
Let $\s=\s_L\cap\s_R$. By the condition (a), we know that $\X,\Y\subseteq \s$. Then by Lemma \ref{L3}  and Remark \ref{Hov}, we only need to show that $\s$ is a thick subcategory.

(1) By Lemma \ref{summands}, $\s$ is closed under direct summands.

Let $A\xrightarrow{x} B\xrightarrow{y} C\dashrightarrow$ be an $\EE$-triangle.

(2) If $A,C\in \s$, we show that $B\in \s$. $A$ admits an $\EE$-triangle $A\to Y_A\to X_A\dashrightarrow$ with $Y_A\in \Y$ and $X_A\in \X$,  and $C$ admits an $\EE$-triangle $Y^C\to X^C\to C\dashrightarrow$ with $Y^C\in \Y$ and $X^C\in \X$, then we have the following commutative diagrams
$$\xymatrix@C=0.8cm@R0.7cm{
A \ar[r] \ar[d] &B \ar[r] \ar[d] &C \ar@{=}[d] \ar@{-->}[r] &\\
Y_A \ar[r] \ar[d] &D \ar[r] \ar[d] &C \ar@{-->}[r] &,\\
X_A \ar@{=}[r] \ar@{-->}[d] &X_A \ar@{-->}[d]\\
&&\\
}\quad \xymatrix@C=0.8cm@R0.7cm{
&Y^C \ar@{=}[r] \ar[d] &Y^C \ar[d]\\
Y_A \ar[r] \ar@{=}[d] &Y_A\oplus X^C \ar[r] \ar[d] &X^C \ar@{-->}[r] \ar[d] &\\
Y_A \ar[r] &D \ar[r] \ar@{-->}[d] &C \ar@{-->}[r] \ar@{-->}[d] &.\\
&&
}
$$
Since  by the proof of Lemma \ref{iso}, $Y_A$ admits an $\EE$-triangle $Y_1\to W_1\to Y_A\dashrightarrow$ with $Y_1\in \Y$ and $W_1\in \W$,  and $X^C$ admits an $\EE$-triangle $X^C\to W_2\to X_2\dashrightarrow$ with $X_2\in \X$ and $W_2\in \W$, we have the following commutative diagrams
$$\xymatrix@C=0.8cm@R0.7cm{
Y_C \ar[r] \ar@{=}[d] &Y_A\oplus X^C \ar[r] \ar[d] &D \ar[d] \ar@{-->}[r] &\\
Y_C \ar[r] &Y_A\oplus W_2 \ar[r] \ar[d] &Y' \ar[d] \ar@{-->}[r] &,\\
&X_2 \ar@{=}[r] \ar@{-->}[d] &X_2 \ar@{-->}[d]\\
&&\\
}\quad \xymatrix@C=0.8cm@R0.7cm{
Y_1 \ar@{=}[r] \ar[d] &Y_1 \ar[d]\\
Y_2 \ar[r] \ar[d] &W_1\oplus X^C \ar[r] \ar[d] &D \ar@{=}[d] \ar@{-->}[r] &\\
Y_C \ar[r] \ar@{-->}[d] &Y_A\oplus X^C \ar[r] \ar@{-->}[d] &D \ar@{-->}[r] &\\
&&
}
$$
with $Y_2\in \Y$.  Since $(\U,\Y)$ is hereditary, by definition we have $Y'\in \Y$. Then we have the following commutative diagrams:
$$\xymatrix@C=0.8cm@R0.7cm{
B \ar[r] \ar@{=}[d] &D \ar[r] \ar[d] &X_A \ar@{-->}[r] \ar[d] &\\
B \ar[r] &Y' \ar[r] \ar[d] &X' \ar@{-->}[r] \ar[d] &,\\
&X_2 \ar@{=}[r] \ar@{-->}[d] &X_2 \ar@{-->}[d]\\
&&\\
}\hspace{1.2cm}\xymatrix@C=0.8cm@R0.7cm{
Y_2 \ar@{=}[r] \ar[d] &Y_2 \ar[d]\\
X'' \ar[r] \ar[d] &W_1\oplus X^C \ar[r] \ar[d] &X_A \ar@{=}[d] \ar@{-->}[r] &\\
B \ar[r] \ar@{-->}[d] &D \ar[r] \ar@{-->}[d] &X_A \ar@{-->}[r] &\\
&&
}
$$
with $X'\in \X$.  Since $(\X,\V)$ is hereditary, by definition we have $X''\in \X$. Hence $B\in \s$ and $\s$ is closed under extensions.

(3) If $A,B\in \s$, we show that $C\in \s$. In the previous argument, we know that $C$ admits {\red a} commutative diagram
$$\xymatrix@C=0.8cm@R0.7cm{
A \ar[r] \ar[d] &B \ar[r] \ar[d] &C \ar@{=}[d] \ar@{-->}[r] &\\
Y_A \ar[r] \ar[d] &D \ar[r] \ar[d] &C \ar@{-->}[r] &.\\
X_A \ar@{=}[r] \ar@{-->}[d] &X_A \ar@{-->}[d]\\
&&
}
$$
Since $B\in \s$, and $\s$ is closed under extensions, we get that $D\in \s$.  Then $D$ admits an $\EE$-triangle $D\to Y_D\to X_D\dashrightarrow$ with $Y_D\in \Y$ and $X_D\in \X$. We have the following commutative diagram
$$\xymatrix@C=0.8cm@R0.7cm{
Y_A \ar[r] \ar@{=}[d] &D \ar[r]^d \ar[d]^y &C \ar@{-->}[r] \ar[d]^c &\\
Y_A \ar[r] &Y_D \ar[r]^{y_0} \ar[d] &Y_0 \ar@{-->}[r] \ar[d] &.\\
&X_D \ar@{=}[r] \ar@{-->}[d] &X_D \ar@{-->}[d]\\
&&
}
$$
 Since $(\U,\Y)$ is hereditary, by definition we have $Y_0\in \Y$. By \cite[Proposition 1.20]{LN}, we can choose a morphism $y_0':Y_D\to Y_0$ to make an $\EE$-triangle $D\xrightarrow{\svecv{-d}{y}} C\oplus Y_D\xrightarrow{\svech{c}{y_0'}} Y_0\dashrightarrow$.

Since $\s$ is closed under extensions and direct summands, we get that $C\in \s$.

(4) Dually we can show that $B,C\in \s$ implies $A\in \s$.

Hence $\s$ is a thick subcategory. Moreover, we have $\s=\s_R=\s_L$.

Let $\M$ be an extension closed subcategory in which $(\X,\Y)$ is a cotorsion pair. Then by definition $\M\subseteq \s$. Let $S$ be any object in $\s$. $S$ admits an $\EE$-triangle $Y^S\to X^S\to S\dashrightarrow$ with $Y^S\in \Y$ and $X^S\in \X${\red .} $X^S$ admits an $\EE$-triangle $X^S\to W_0\to X_0\dashrightarrow$ with $W_0\in \W$ and $X_0\in \X$. We have the following commutative diagram
$$\xymatrix@C=0.8cm@R0.7cm{
Y^S \ar[r] \ar@{=}[d] &X^S \ar[r] \ar[d] &S \ar@{-->}[r] \ar[d] &\\
Y^S \ar[r] &W_0 \ar[r] \ar[d] &Y \ar@{-->}[r]. \ar[d] &\\
&X_0 \ar@{=}[r] \ar@{-->}[d] &X_0 \ar@{-->}[d]\\
&&
}
$$
 Since $(\U,\Y)$ is hereditary, by definition we have $Y\in \Y$. By the similar argument as above, we can get an $\EE$-triangle $X^S \to S\oplus W_0\to Y\dashrightarrow$. Since $\M$ is closed under extensions, and by Lemma \ref{cap}, $\M$ is closed under direct summands, we have $S\in \M$. This means $\s\subseteq \M$. Then we get $\M=\s$.
\end{proof}

By proving Proposition \ref{Hot} and Proposition \ref{Here}, we have the  following question: if $((\X,\V),(\U,\Y))$ is not Hovey, can we still get a similar equivalence between the ideal quotient $\Z/[\W]$ and a localization with respect to an extension closed subcategory? On the other hand, we know that the ideal quotients are extriangulated in Theorem \ref{1.1} and Theorem \ref{1.2}, and at the same time the localizations have triangulated category structures.  A natural question is that do these equivalences preserve extriangulated category structures?

In the rest of the paper,  we assume that there is an extension closed subcategory $\s$ in which $(\X,\Y)$ is a cotorsion pair.

We first show that under certain condition, there exists an equivalence between the ideal quotient $\Z/[\W]$ and the localization with respect to $\s$. Then we show that when $\s$ is thick, the equivalence from $\Z/[\W]$ to the localization has better properties.

\subsection{A localization of $\B$ realized by $\Z/[\W]$}


We first establish a functor $G$ from $\B$ to $\Z/[\W]$. For any object $B$ of $\B$, we fix two $\EE$-triangles
\begin{align*}
\xymatrix@C=0.6cm@R0.5cm{B\ar[r]^{v_B} &V_B\ar[r] &X_B \ar@{-->}[r] &},\quad
\xymatrix@C=0.6cm@R0.5cm{Y^B\ar[r] &Z_B\ar[r]^{z_B} &V_B \ar@{-->}[r] &,}
\end{align*}
where $X_B\in \X$, $Y^B\in \Y$, $v_B$ is a minimal left $\V$-approximation and $z_B$ is a minimal right $\U$-approximation. We have $Z_B\in \Z$.  Note that by Lemma \ref{deflation}, we can take these minimal approximations.

Let $f:B\to C$ be any morphism in $\B$. Then we have the following commutative diagrams:
$$\xymatrix@C=0.8cm@R0.7cm{
B \ar[r]^{v_B} \ar[d]^f &V_B \ar[r] \ar[d]^{v_f} &X_B \ar[d] \ar@{-->}[r] &\\
C \ar[r]^{v_C} &V_C \ar[r] &X_C \ar@{-->}[r] &,\\
}\quad
\xymatrix@C=0.8cm@R0.7cm{
Y^B \ar[r] \ar[d] &Z_B \ar[r]^{z_B} \ar[d]^{z_f} &V_B \ar[d]^{v_f} \ar@{-->}[r] &\\
Y^C \ar[r] &Z_C  \ar[r]^{z_C} &V_C \ar@{-->}[r] &.
}
$$

\begin{lem}\label{unique}
If we have $v_f':V_B\to V_C$ and $z_f':Z_B\to Z_C$ such that the following diagrams commute:
$$\xymatrix@C=0.8cm@R0.7cm{
B \ar[r]^{v_B} \ar[d]^f &V_B \ar[r] \ar[d]^{v_f'} &X_B \ar[d] \ar@{-->}[r] &\\
C \ar[r]^{v_C} &V_C \ar[r] &X_C \ar@{-->}[r] &,\\
} \quad
\xymatrix@C=0.8cm@R0.7cm{
Y^B \ar[r] \ar[d] &Z_B \ar[r]^{z_B} \ar[d]^{z_f'} &V_B \ar[d]^{v_f'} \ar@{-->}[r] &\\
Y^C \ar[r] &Z_C  \ar[r]^{z_C} &V_C \ar@{-->}[r] &
}
$$
then $\overline v_f=\overline v_f'$ and $\overline z_f=\overline z_f'$.
\end{lem}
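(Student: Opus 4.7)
The plan is to exhibit both $v_f-v_f'$ and $z_f-z_f'$ as morphisms factoring through $\W=\X\cap\Y$, working first at the $V$-level and then bootstrapping up to the $Z$-level.

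First, commutativity of the left squares in the $V$-diagrams yields $v_fv_B=v_Cf=v_f'v_B$, so $(v_f-v_f')v_B=0$. Applying $\Hom_\B(-,V_C)$ to the $\EE$-triangle $B\xrightarrow{v_B}V_B\to X_B\dashrightarrow$ and invoking the long exact sequence from Definition \ref{dein}(5), I obtain that $v_f-v_f'$ factors as $V_B\to X_B\xrightarrow{h}V_C$ for some $h$. Because $(\X,\Y)$ is a cotorsion pair in $\s$, Lemma \ref{cap} gives $\X\cap\V=\U\cap\Y=\W$, so Lemma \ref{iso} applies and forces $h$ to factor through $\W$. Hence $v_f-v_f'\in[\W](V_B,V_C)$, and $\overline v_f=\overline v_f'$.

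For the $Z$-level, the right squares of the $Z$-diagrams give $z_C(z_f-z_f')=(v_f-v_f')z_B$, and by the preceding paragraph this equals $b\,a\,z_B$ for some factorization $V_B\xrightarrow{a}W\xrightarrow{b}V_C$ with $W\in\W\subseteq\U$. Applying $\Hom_\B(W,-)$ to the $\EE$-triangle $Y^C\to Z_C\xrightarrow{z_C}V_C\dashrightarrow$, and using $\EE(\U,\Y)=0$, I can lift $b$ to a morphism $b':W\to Z_C$ with $z_Cb'=b$. Then $g:=z_f-z_f'-b'az_B$ satisfies $z_Cg=0$, so by the long exact sequence applied to $\Hom_\B(Z_B,-)$, $g$ factors as $Z_B\to Y^C\to Z_C$. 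Finally, the remark just after Lemma \ref{cap} supplies an $\EE$-triangle $Y'\to W_0\to Y^C\dashrightarrow$ with $W_0\in\W$ and $Y'\in\Y$; applying $\Hom_\B(Z_B,-)$ and using $\EE(Z_B,Y')\subseteq\EE(\U,\Y)=0$ shows that every morphism $Z_B\to Y^C$ lifts through $W_0$. Thus $g$ factors through $\W$, while $b'az_B$ manifestly factors through $W\in\W$, giving $z_f-z_f'\in[\W](Z_B,Z_C)$ and therefore $\overline z_f=\overline z_f'$.

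The main subtlety is the $Z$-level bootstrapping: passing from a $\W$-factorization of $z_C(z_f-z_f')$ to a $\W$-factorization of $z_f-z_f'$ itself. This requires the combination of two facts guaranteed by the standing hypotheses, namely the existence of lifts along $z_C$ via $\EE(\U,\Y)=0$ together with the structural property that maps from $\U$-objects into $\Y$-objects always factor through $\W$ (obtained from $\X\cap\V=\U\cap\Y=\W$ via Lemma \ref{cap} and the ensuing remark).
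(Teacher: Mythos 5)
Your proof is correct and follows essentially the same strategy as the paper's: show $v_f-v_f'$ factors through $X_B$ and hence through $\W$ via Lemma \ref{cap} and Lemma \ref{iso}, then transfer this to the $Z$-level by lifting along $z_C$ (using $\EE(\W,\Y^C)=0$) and factoring the resulting kernel term through $Y^C$ and hence through $\W$. The only cosmetic difference is that where the paper simply cites Lemma \ref{iso} to conclude that a map $Z_B\to Y^C$ factors through $\W$, you unfold that lemma's proof inline; the substance is identical.
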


\begin{proof}
If we have the commutative diagrams above, then $v_f-v_f':V_B\to V_C$ factors through $X_B$, hence by  Lemma \ref{cap} and Lemma~\ref{iso} factors through $\W$. Then $z_C\circ(z_f-z_f')$ factors through an object $W\in \W$. Let $z_C\circ(z_f-z_f')=w_2\circ w_1$ with $w_1:Z_B\to W$ and $w_2:W\to V_C$. Then there is a morphism $w_3:W\to Z_C$ such that $w_2=z_C\circ w_3$. Thus $z_C\circ\left((z_f-z_f')-w_3w_1\right)=0$, then $(z_f-z_f')-w_3w_1$ factors through $Y^C$, hence by Lemma~\ref{iso} factors through $\W$. Then $\overline z_f=\overline z_f'$.
\end{proof}

%
%
Now we can define a functor $G$ from $\B$ to $\Z/[\W]$, acting as follows:
$$G(B)=Z_B, \quad G(f)=\overline z_f.$$

\begin{rem}\label{rem1}
By the construction of $G$, we have $G(v_B)=G(z_B)=\overline 1_{Z_B}$ and $G(\s)=0$. Moreover, $G$ is additive.
\end{rem}

\begin{defn}\label{lo}
Denote by $\R$ the following class of morphisms:
$$\{f:B\to C \mid \exists \text{ } \EE\text{-triangle} \xymatrix@C=0.5cm@R0.5cm{B\ar[r]^{f} &C\ar[r] &S \ar@{-->}[r]&} \text{ with } S\in \s\}.$$
Denote by $\B/\s$ the Gabriel-Zisman localization of $\B$ with respect to $\R$
(see \cite[Section I.2]{GZ} or \cite[Section 2.2]{K2} for more details of such localization).
\end{defn}

In this localization, any morphism $f\in \R$ becomes invertible. For any morphism $g$, we denote its image in $\B/\s$ by $\underline g$.


We will show the following theorem.

\begin{thm}\label{main5}
Assume $\s$ is closed under taking cones, which means in any $\EE$-triangle $A\to B\to C\dashrightarrow$, $A,B\in \s$ implies that $C\in \s$. Then the Gabriel-Zisman localization $\B/\s$ is equivalent to $\Z/[\W]$.
\end{thm}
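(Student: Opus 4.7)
The plan is to construct mutually quasi-inverse additive functors $F\colon\Z/[\W]\to\B/\s$ and $\overline{G}\colon\B/\s\to\Z/[\W]$, where $F$ is induced by the inclusion $\Z\hookrightarrow\B$ followed by the localization, and $\overline{G}$ is obtained by factoring the functor $G$ of Section~3 through $\B/\s$. The first two bookkeeping facts I would need are: every $S\in\s$ is isomorphic to $0$ in $\B/\s$ (the split $\EE$-triangle $0\to S\xrightarrow{\id}S\dashrightarrow$ shows that $0\to S\in\R$ and is hence inverted), and consequently every morphism in $\B$ factoring through $\W\subseteq\s$ vanishes in $\B/\s$, so that $F$ is well defined and additive.

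Next I would verify that $G$ inverts $\R$. For $f\colon B\to C$ in $\R$ with $\EE$-triangle $B\xrightarrow{f}C\to S\dashrightarrow$, $S\in\s$, I would paste the defining $\EE$-triangles of $(V_B,Z_B)$ and $(V_C,Z_C)$ against $f$ via successive pushouts along $v_B,v_C$ and pullbacks along $z_B,z_C$; the $(\X,\Y)$-cotorsion pair inside $\s$ together with the hypothesis that $\s$ is closed under cones allow the cone and cocone of the induced map $z_f\colon Z_B\to Z_C$ to be routed through $\W$, forcing $\overline{z_f}$ to be an isomorphism in $\Z/[\W]$. By the universal property of the Gabriel--Zisman localization, $G$ factors as $\overline{G}\colon\B/\s\to\Z/[\W]$, and the identity $\overline{G}\circ F=\id_{\Z/[\W]}$ follows from $V_Z=Z$ and $Z_Z=Z$ for every $Z\in\Z$.

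The heart of the proof, and the main obstacle, is to show $F\circ\overline{G}\simeq\id_{\B/\s}$, i.e.\ to exhibit a natural isomorphism $B\cong Z_B$ in $\B/\s$ for every $B$. The obvious candidate is the zigzag $B\xrightarrow{v_B}V_B\xleftarrow{z_B}Z_B$: its first arrow is already invertible in $\B/\s$ because $v_B\in\R$, but $z_B$ is a \emph{deflation} whose cocone $Y^B\in\Y\subseteq\s$ sits in $\s$, whereas $\R$ by definition consists only of \emph{inflations} with cone in $\s$. To circumvent this asymmetry I would combine the $\W$-resolution $Y'\to W_0\to Y^B\dashrightarrow$ of $Y^B$ furnished by (the proof of) Lemma~\ref{iso} with the $\EE$-triangle $Y^B\to Z_B\to V_B\dashrightarrow$ using the $3\times 3$-axiom of an extriangulated category; this produces a finite chain of $\EE$-triangles whose cones all lie in $\s$ (the hypothesis that $\s$ is closed under cones is exactly what ensures that the new cones propagated by the pasting stay inside $\s$) and which realizes $\underline{z_B}$ in $\B/\s$ as a composition of morphisms in $\R$ together with their localized inverses. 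Naturality of the resulting isomorphism $B\cong Z_B$ in $B$ then follows from Lemma~\ref{unique}, completing the proof that $F$ and $\overline{G}$ are mutually quasi-inverse equivalences.
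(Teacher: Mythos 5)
Your high-level strategy coincides with the paper's: the paper's $H$ is your $\overline G$, it constructs $F$ exactly as you do, and Proposition~\ref{quasi} plays the role of your quasi-inverse check. The decisive point is also the same in both: one must show $\underline{z_B}$ is invertible in $\B/\s$ (Corollary~\ref{BMLcor}, via the zigzag $B\xrightarrow{v_B}V_B\xleftarrow{z_B}Z_B$). It is precisely here that your argument has a gap.

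You propose to paste the $\W$-resolution $Y'\to W_0\to Y^B$ against $Y^B\to Z_B\xrightarrow{z_B}V_B$ via ``the $3\times 3$ axiom.'' But those two $\EE$-triangles share $Y^B$ as the \emph{cone} of the first and the \emph{cocone} of the second; there is no common inflation or deflation to push out or pull back, no shared morphism of $\EE$-triangles, and (unlike in a triangulated category) no shift with which to rotate one of them into a composable position. So no $3\times 3$ diagram arises from this data. The device that actually works, and the one used in Corollary~\ref{BMLcor}, is a left $\W$-coresolution of $Z_B$ itself rather than a $\W$-resolution of $Y^B$: using the cotorsion pair $(\U,\Y)$, any $U\in\U$ (in particular $Z_B\in\Z\subseteq\U$) admits an $\EE$-triangle $U\xrightarrow{w}W\to U'$ with $W\in\W$ and $U'\in\U$ (from $U\to Y_U\to U_U$ with $Y_U\in\Y,U_U\in\U$, the long exact sequence gives $\EE(Y_U,\Y)=0$, hence $Y_U\in\U\cap\Y=\W$). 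Now $Y^B\to Z_B\xrightarrow{z_B}V_B$ and $Z_B\xrightarrow{w}W\to Z'$ present $Z_B$ as the middle of the first and the source of the inflation in the second, which is exactly the hypothesis of (ET4) for the composable inflations $Y^B\to Z_B\to W$; the resulting $3\times 3$ diagram has middle row $Y^B\to W\to S$, so $S$ is a cone on a morphism between objects of $\s$, and the closed-under-cones hypothesis forces $S\in\s$. Then \cite[Proposition~1.20]{LN} gives an $\EE$-triangle $Z_B\to V_B\oplus W\to S$, placing $Z_B\to V_B\oplus W$ in $\R$; projecting onto $V_B$ via Lemma~\ref{BML}(1) (using $W\in\W\subseteq\s$) shows $\underline{z_B}$ is invertible. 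Two further, smaller remarks: the closed-under-cones hypothesis is \emph{not} needed to show $G$ inverts $\R$ — Proposition~\ref{im} is proved by diagram chasing with the cotorsion data alone, and the hypothesis enters only at Corollary~\ref{BMLcor} — and the naturality of the isomorphism $B\cong Z_B$ in $\B/\s$ really does require the explicit zigzag computation of Lemma~\ref{lem} (together with the standard description of morphisms in the Gabriel--Zisman localization as alternating chains); Lemma~\ref{unique} by itself only gives well-definedness of $\overline{z_f}$ on the nose.
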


We first show an important property of functor $G$.

\begin{prop}\label{im}
$G(f)$ is an isomorphism for any morphism $f:B\to C$ in $\R$.
\end{prop}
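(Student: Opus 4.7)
The plan is to argue in two main steps: first, prove the key observation that $G$ sends objects of $\s$ into $\W$ (hence to zero in $\Z/[\W]$); then construct an $\EE$-triangle $Z_B \to Z_C \to Z_S \dashrightarrow$ in $\Z$ which splits and exhibits $\overline{z_f}$ as an isomorphism.

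For the first step, I would fix $S \in \s$ and use the cotorsion pair $(\X,\Y)$ in $\s$ to write an $\EE$-triangle $S \to Y^S \to X^S \dashrightarrow$ with $Y^S \in \Y$ and $X^S \in \X$. The vanishing $\EE(\X,\V)=0$ forces any morphism $S\to V$ with $V \in \V$ to factor through $Y^S$, so $S\to Y^S$ is a left $\V$-approximation. Lemma~\ref{deflation} then makes the minimal one $v_S\colon S\to V_S$ a summand of $Y^S$, and since $\Y$ is closed under summands, $V_S\in\Y$. Applying the analogous trick one level down, using Lemma~\ref{cap} to identify $\W=\U\cap\Y$ and the construction from the proof of Lemma~\ref{iso}, yields an $\EE$-triangle $Y'\to W\to V_S\dashrightarrow$ with $W\in\W$ that is a right $\U$-approximation of $V_S$ (because $\EE(\U,\Y)=0$). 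A second application of Lemma~\ref{deflation} then makes $Z_S$ a direct summand of $W$, so $Z_S\in\W$.

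For the second step, given $f\colon B\to C$ in $\R$ with $\EE$-triangle $B\xrightarrow{f} C\xrightarrow{g} S\dashrightarrow$, I would use the pushout and pullback axioms of extriangulated categories (together with ET4) to assemble a $3\times 3$-diagram whose columns are $B\to V_B\to X_B$, $C\to V_C\to X_C$, $S\to V_S\to X_S$, and whose rows are again $\EE$-triangles. To make the rows close up one may have to replace $V_C$ by a biproduct with an object of $\W$; by Lemma~\ref{unique} this does not change $\overline{v_f}$. Running the same construction one level deeper, applied to the induced middle row together with the minimal right $\U$-approximations, yields an $\EE$-triangle
\[
Z_B \xrightarrow{z_f} Z_C \longrightarrow Z_S \dashrightarrow
\]
in $\Z$. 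Its extension class lies in $\EE(Z_S, Z_B)$; since $Z_S\in\W\subseteq\X$ and $Z_B\in\Z\subseteq\V$, the cotorsion pair $(\X,\V)$ forces this group to vanish, so the triangle splits as $Z_C\cong Z_B\oplus Z_S$ with $z_f$ the inclusion. Modding out $[\W]$ kills $Z_S$, so the complementary projection serves as the inverse of $\overline{z_f}$.

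The main obstacle is the $3\times 3$-construction in the second step: one has to glue the pushout of $f$ along $v_B$ with the factorization of $v_S g$ through $v_C$ so that every row and column is a genuine $\EE$-triangle, and then repeat the whole procedure on the $Z$-layer. The bookkeeping of these choices up to direct summands in $\W$, legitimized by Lemma~\ref{unique}, is what lets the argument proceed at the level of $\Z/[\W]$ instead of requiring strict equalities in $\B$.
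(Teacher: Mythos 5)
Your Step 1 is correct and is a careful proof of the assertion $G(\s)=0$ recorded in Remark~\ref{rem1}: the $\EE$-triangle $S\to Y^S\to X^S\dashrightarrow$ coming from the cotorsion pair $(\X,\Y)$ in $\s$ makes $S\to Y^S$ a left $\V$-approximation because $\EE(\X,\V)=0$, so by Lemma~\ref{deflation} the minimal left $\V$-approximation $V_S$ is a summand of $Y^S\in\Y$; then the $\EE$-triangle $Y'\to W\to V_S\dashrightarrow$ with $W\in\W$, $Y'\in\Y$ is a right $\U$-approximation because $\EE(\U,\Y)=0$, so $Z_S$ is a summand of $W$, hence $Z_S\in\W$. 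This is a genuine and useful observation.

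The gap is in Step 2, and it is not mere bookkeeping. You assert that the $3\times 3$-construction, iterated once more at the $Z$-level, produces an $\EE$-triangle $Z_B\xrightarrow{z_f}Z_C\to Z_S\dashrightarrow$ \emph{in} $\Z$. Two things go wrong. First, the middle row you want, $V_B\to V_C\to V_S\dashrightarrow$ (even allowing a $\W$-summand on $V_C$), is not known to be an $\EE$-triangle: pushing out $B\to V_B\to X_B\dashrightarrow$ along $f$ gives some object $C'$ fitting into $V_B\to C'\to S\dashrightarrow$ and $C\to C'\to X_B\dashrightarrow$, and there is no reason for $C'$ to lie in $\V$, let alone to be $V_C\oplus(\W\text{-stuff})$; the $3\times3$ lemma determines the missing corner, it does not let you prescribe it. Second, and more seriously, at the $Z$-layer the natural candidate for the middle object is an extension with ends related to $V_B,Y^B,Y^C$, which lands in $\V$ (since $\V$ is extension-closed) but has no reason to land in $\U$: $\U$ is not closed under cocones unless $(\U,\Y)$ is hereditary, which is \emph{not} a hypothesis of this proposition. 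So you cannot conclude that the middle term lies in $\Z$, nor identify it with $Z_C$, nor identify the first map with $z_f$. Lemma~\ref{unique} controls ambiguity of morphisms up to $[\W]$, not membership of objects in $\U\cap\V$.

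The paper's actual proof does not construct your triangle. It keeps the cone inside $\s$ at every stage (using extension-closedness of $\s$) rather than trying to push it all the way down to $Z_S$, repeatedly trades $\X$- and $\Y$-pieces for $\W$-pieces, and ends with a \emph{split} $\EE$-triangle $Y_3\to Z_B\oplus X_2\oplus Y^C\to Z_C\oplus W\dashrightarrow$ (with $X_2,W\in\W$, $Y_3\in\Y$); the conclusion that $\overline{z_f}$ is an isomorphism then comes from $\Hom_{\oB}(\U,\Y)=0$ and a Krull--Schmidt comparison of indecomposable summands. If you want your cleaner route to go through, the missing ingredient is a proof that $z_f$ fits (after adding a $\W$-summand to $Z_C$) into an $\EE$-triangle in $\Z$ whose third term lies in $\W$; nothing in the write-up establishes this, and it appears to require essentially the same intermediate work as the paper.
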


\begin{proof}
Morphism $f$ admits an $\EE$-triangle $\xymatrix@C=0.6cm@R0.7cm{B\ar[r]^{f} &C\ar[r] &S \ar@{-->}[r] &}$ with $S\in
\s$. Then we have the following commutative diagram
$$\xymatrix@C=0.8cm@R0.7cm{
B \ar[r]^{v_B} \ar[d]_f &V_B \ar[r]^{x_B} \ar[d]^{d_1} &X_B \ar@{=}[d] \ar@{-->}[r] &\\
C \ar[r]^{c'} \ar[d] &C' \ar[d]^{d'_2} \ar[r]^{ x'} &X_B \ar@{-->}[r] &.\\
S \ar@{=}[r]  \ar@{-->}[d] &S \ar@{-->}[d]\\
&&
}
$$
Since $V_C\in \V$, we have $\EE(X_B,V_C)=0$, then we get the following commutative diagram
$$\xymatrix@C=0.8cm@R=0.7cm{
B \ar[r]^{v_B} \ar[d]_f &V_B \ar[r]^{x_B} \ar[d]^{d_1} &X_B \ar@{=}[d] \ar@{-->}[r] &\\
C \ar[r]^{c'} \ar[d]_{v_C} &C' \ar[d]^-{\svecv{d_2}{x'}} \ar[r]^{x'} &X_B \ar@{=}[d] \ar@{-->}[r] &\\
V_C \ar[r]^-{\svecv{1}{0}} \ar[d] &V_C\oplus X_B \ar[d] \ar[r] &X_B \ar@{-->}[r] &\\
X_C \ar@{=}[r] \ar@{-->}[d] &X_C \ar@{-->}[d]\\
&&
}
$$
such that $\overline {d_2d_1}=\overline {v_f}$ by the proof of Lemma \ref{unique}. Then we have a commutative diagram
$$\xymatrix@C=0.8cm@R0.7cm{
V_B \ar[r]^{d_1} \ar@{=}[d] &C' \ar[r] \ar[d]^-{\svecv{d_2}{ x'}} &S \ar[d] \ar@{-->}[r] &\\
V_B \ar[r]_-{\svecv{d_2d_1}{x_B}} &V_C\oplus X_B \ar[r] \ar[d] &S_1 \ar[d] \ar@{-->}[r] &\\
&X_C \ar@{=}[r]  \ar@{-->}[d] &X_C \ar@{-->}[d]\\
&&
}$$
with $S_1\in \s$. Since $X_B$ admits an $\EE$-triangle $\xymatrix@C=0.6cm@R0.6cm{X_B\ar[r]^{w} &W\ar[r] &X_1\ar@{-->}[r] &}$ where $W\in \W$ and $X_1\in \X$, we have the following commutative diagram
$$\xymatrix@C=0.8cm@R0.7cm{
V_B \ar[r]^-{\svecv{d_2d_1}{x_B}} \ar@{=}[d] &V_C\oplus X_B \ar[r] \ar[d]^-{\left(\begin{smallmatrix}
1& 0\\
0& w
\end{smallmatrix}\right)} &S_1 \ar[d] \ar@{-->}[r] &\\
V_B \ar[r] &V_C\oplus W \ar[r] \ar[d] &S_2 \ar[d] \ar@{-->}[r] &\\
&X_1 \ar@{=}[r]  \ar@{-->}[d] &X_1 \ar@{-->}[d]\\
&&
}
$$
with $S_2\in \s$. $S_2$ admits an $\EE$-triangle $Y_2\to X_2\to S_2\dashrightarrow$ with $Y_2\in \Y$ and $X_2\in \X$, then we have the following commutative diagram
$$\xymatrix@C=0.8cm@R0.7cm{
&&Y_2 \ar@{=}[r] \ar[d] &Y_2 \ar[d]\\
V_B \ar@{=}[d] \ar[rr]^-{\svecv{1_{V_B}}{0}} &&V_B\oplus X_2 \ar[r] \ar[d]^-{\left(\begin{smallmatrix}
d_2d_1& *\\
*&*
\end{smallmatrix}\right)} &X_2 \ar@{-->}[r] \ar[d] &\\
V_B \ar[rr]_-{\left(\begin{smallmatrix}
d_2d_1& 0\\
0& wx_B
\end{smallmatrix}\right)} &&V_C\oplus W \ar[r] \ar@{-->}[d] &S_2  \ar@{-->}[d] \ar@{-->}[r] &.\\
&&&
}$$
Since $Y_2,V_C\oplus W\in \V$, we get that $X_2\in \X\cap\V=\W$. Now we obtain the following commutative diagrams
$$\xymatrix@C=0.8cm@R0.7cm{
Y^B \ar@{=}[r] \ar[d] &Y^B \ar[d]\\
Y_3 \ar[r] \ar[d] &Z_B\oplus X_2 \ar[rr]^-{\left(\begin{smallmatrix}
 d_2d_1z_B& *\\
*&*
\end{smallmatrix}\right)} \ar[d]^-{\left(\begin{smallmatrix}
z_B& 0\\
0& 1_{X_2}
\end{smallmatrix}\right)} &&V_C\oplus W \ar@{=}[d] \ar@{-->}[r] &\\
Y_2 \ar[r] \ar@{-->}[d] &V_B\oplus X_2 \ar[rr]_-{\left(\begin{smallmatrix}
d_2d_1& *\\
*&*
\end{smallmatrix}\right)} \ar@{-->}[d] &&V_C\oplus W \ar@{-->}[r] &,\\
&&\\
}\quad \xymatrix@C=0.8cm@R0.7cm{
&Y^C \ar@{=}[rr] \ar[d] &&Y^C \ar[d]\\
Y_3 \ar[r] \ar@{=}[d] &Z_B\oplus X_2\oplus Y^C \ar[rr]^-{\left(\begin{smallmatrix}
z& * &*\\
*& * &*
\end{smallmatrix}\right)} \ar[d]^-{\left(\begin{smallmatrix}
1_{Z_B}& 0 &0\\
0& 1_{X_2} &0
\end{smallmatrix}\right)} &&Z_C\oplus W \ar@{-->}[r] \ar[d]^-{\left(\begin{smallmatrix}
z_C& 0\\
0& 1_W
\end{smallmatrix}\right)} &\\
Y_3 \ar[r] &Z_B\oplus X_2 \ar[rr]_-{\left(\begin{smallmatrix}
d_2d_1z_B& *\\
*&*
\end{smallmatrix}\right)} \ar@{-->}[d] &&V_C\oplus W \ar@{-->}[d] \ar@{-->}[r] &\\
&&&
}
$$
with $Y_3\in \Y$. Since  $\EE(Z_C\oplus W,Y_3)=0$, we have  the following commutative diagram:
$$
\xymatrix@C=0.8cm@R0.7cm{
Y_3 \ar[r] \ar@{=}[d] &Z_B\oplus X_2\oplus Y^C \ar[rr]^-{\left(\begin{smallmatrix}
z& * &*\\
*& * &*
\end{smallmatrix}\right)} \ar@{.>}[d]^-{\left(\begin{smallmatrix}
z& * &*\\
*& * &*\\
*& * &*
\end{smallmatrix}\right)}_-{\simeq} &&Z_C\oplus W \ar@{-->}[r]^-0 \ar@{=}[d] &\\
Y_3 \ar[r]_-{\left(\begin{smallmatrix}
0& 0 &1
\end{smallmatrix}\right)} &Z_C\oplus W\oplus Y_3 \ar[rr]_-{\left(\begin{smallmatrix}
1& 0 &0\\
0& 1& 0
\end{smallmatrix}\right)} &&Z_C\oplus W  \ar@{-->}[r]^-0 &.
}
$$
By Lemma \ref{iso}, $\Hom_{\oB}(Z_B,Y_3)=0$,  hence $\overline z$ is a section and $Z_B$ is a direct summand of $Z_C$ in $\oB$. On the other hand, if an indecomposable object is a direct summand of both $Y^C$ and $Z_C$, then it has to  lie in $\W$. Hence $Y^C$ is a direct summand of $Y_3$ in $\oB$. Since $Z_B\oplus Y^C\simeq Z_C\oplus Y_3$ in $\oB$, we obtain that $\overline z$ is an isomorphism. Since  $\overline {d_2d_1 z_B}=\overline {v_fz_B}=\overline {z_Cz}$, by the proof of Lemma \ref{unique}, we get that $\overline z=\overline z_f$. Hence $\overline z_f$ is an isomorphism.
\end{proof}

By Proposition \ref{im} and the universal property of the localization functor $L_\R:\B\to \B/\s$, we obtain the following commutative diagram:
$$\xymatrix@C=0.8cm@R0.7cm{
\B\ar[rr]^-{G} \ar[dr]_{L_\R} &&\Z/[\W].\\
&\B/\s \ar@{.>}[ur]_{H}
}
$$

The following lemma is useful. The proof is an analogue of \cite[Lemma 3.5]{BM}, so we omit it.

\begin{lem}\label{BML}
\begin{itemize}
\item[\rm (1)] Let $X$ be any object in $\B$ and $S\in \s$. Then $X\oplus S\xrightarrow{\svech{1_X}{0}}X$ is invertible in $\B/\s$,  and its inverse is $X\xrightarrow{\svecv{1_X}{0}} X\oplus S$.
\item[\rm (2)] Let $B,C$ be any objects in $\B$ and $f,f'\in \Hom_\B(B,C)$. If $\overline f'=0$, then  $\underline{f+ f'}=\underline f$ in $\B/\s$.
\end{itemize}
\end{lem}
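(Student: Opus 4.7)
The plan for part (1) is to verify that the section $\svecv{1_X}{0}\colon X\to X\oplus S$ itself lies in $\R$, and then derive the invertibility of $\svech{1_X}{0}$ as a formal consequence. Concretely, the canonical split $\EE$-triangle
\[
X\xrightarrow{\svecv{1_X}{0}} X\oplus S\xrightarrow{\svech{0}{1_S}} S\dashrightarrow 0
\]
has last term $S\in\s$, so $\svecv{1_X}{0}\in\R$ and $\underline{\svecv{1_X}{0}}$ becomes invertible in $\B/\s$. The trivial identity $\svech{1_X}{0}\circ\svecv{1_X}{0}=1_X$ in $\B$ descends to $\underline{\svech{1_X}{0}}\circ\underline{\svecv{1_X}{0}}=\underline{1_X}$; right-multiplying by the inverse of $\underline{\svecv{1_X}{0}}$ identifies $\underline{\svech{1_X}{0}}$ with that inverse, proving (1).

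For part (2), I would first note that $\W=\X\cap\Y\subseteq\s$, which follows from $(\X,\Y)$ being a cotorsion pair in $\s$; hence a factorization $f'=b\circ a$ through an object $W\in\W$ (available by the hypothesis $\overline{f'}=0$ in $\B/[\W]$) is in particular a factorization through $\s$. The key observation is the pair of matrix identities
\[
f+f'=\svech{f}{b}\circ\svecv{1_B}{a}\quad\text{and}\quad f=\svech{f}{b}\circ\svecv{1_B}{0}.
\]
It therefore suffices to show $\underline{\svecv{1_B}{a}}=\underline{\svecv{1_B}{0}}$ in $\B/\s$, since composing with $\underline{\svech{f}{b}}$ on the left then yields $\underline{f+f'}=\underline{f}$.

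To establish that equality, apply (1) to the pair $(B,W)$: the morphism $\underline{\svech{1_B}{0}}$ is invertible in $\B/\s$ with inverse $\underline{\svecv{1_B}{0}}$. On the other hand, $\svech{1_B}{0}\circ\svecv{1_B}{a}=1_B$ in $\B$ gives $\underline{\svech{1_B}{0}}\circ\underline{\svecv{1_B}{a}}=\underline{1_B}$, which exhibits $\underline{\svecv{1_B}{a}}$ as another right inverse of the invertible morphism $\underline{\svech{1_B}{0}}$. Right inverses of an invertible morphism are unique, so $\underline{\svecv{1_B}{a}}=\underline{\svecv{1_B}{0}}$, completing the argument.

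There is no genuine obstacle here; the reasoning is a standard localization manipulation of the type referenced. The only subtlety worth flagging is the direction of the $\EE$-triangle in the definition of $\R$: since $\R$ uses $\EE$-triangles whose \emph{last} term lies in $\s$, one must formally invert the section $\svecv{1_X}{0}$ (whose cokernel is $S$) rather than the retraction $\svech{1_X}{0}$ itself, and then deduce the invertibility of the latter from that of the former via the universal property of the localization.
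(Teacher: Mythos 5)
Your proof is correct, and since the paper omits the argument (citing it as an analogue of [BM, Lemma 3.5]) your write-up is exactly the intended route: invert the section $\svecv{1_X}{0}$, whose cone $S$ lies in $\s$, to get (1), and use the factorization $f+f'=\svech{f}{b}\circ\svecv{1_B}{a}$ together with $\W\subseteq\s$ and part (1) to get (2). The subtlety you flag—that $\R$ consists of inflations with cone in $\s$, so one must invert the section rather than the retraction directly—is precisely the point to be careful about, and you handle it correctly.
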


In the rest of this subsection, we assume that $\s$ is closed under taking cones.

\begin{cor}\label{BMLcor}
For any $\EE$-triangle $$\xymatrix@C=0.6cm@R=0.6cm{Y\ar[r] &U\ar[r]^{u} &A\ar@{-->}[r] &}$$
with $Y\in \Y$, $U\in \U$, $u$ is invertible in $\B/\s$.
\end{cor}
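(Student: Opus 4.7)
The plan is to use the octahedron axiom~(ET4) together with the cotorsion pair $(\X,\V)$ to express $u$, up to known $\R$-invertibles, as a split deflation; that case is handled by Lemma~\ref{BML}(1). The idea is that the $\U$-hypothesis on the source and the $\Y$-hypothesis on the cocone interact with the twin cotorsion pair exactly so as to produce a vanishing extension class at the final stage.

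First, since $U\in\U$, apply the cotorsion pair $(\X,\V)$ to obtain an $\EE$-triangle $U\xrightarrow{v}V\to X\dashrightarrow$ with $V\in\V$ and $X\in\X$. A long exact sequence argument using $\EE(\X,\Y)=0=\EE(\U,\Y)$ yields $V\in\U\cap\V=\Z$. Since $X\in\X\subseteq\s$, the morphism $v$ lies in $\R$, so $\underline v$ is invertible in $\B/\s$ by Proposition~\ref{im}. Next, applying (ET4) to the composable inflations $Y\to U\xrightarrow{v}V$ gives a commutative diagram
$$\xymatrix@C=0.8cm@R=0.7cm{
Y\ar[r]\ar@{=}[d] & U\ar[r]^u\ar[d]^v & A\ar[d]^\gamma\ar@{-->}[r]&\\
Y\ar[r] & V\ar[r]^\pi\ar[d] & C\ar[d]\ar@{-->}[r]&\\
 & X\ar@{=}[r]\ar@{-->}[d] & X\ar@{-->}[d] &\\
 & & &
}$$
in which both $A\xrightarrow{\gamma}C\to X\dashrightarrow$ and $Y\to V\xrightarrow{\pi}C\dashrightarrow$ are $\EE$-triangles. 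Since $X\in\s$, $\gamma\in\R$ and $\underline\gamma$ is invertible in $\B/\s$. The identity $\pi v=\gamma u$ then reduces the statement to showing that $\underline\pi\colon V\to C$ is invertible.

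The main obstacle is that $C$ is not \emph{a priori} in $\U$, so the triangle $Y\to V\to C$ need not split. The plan for this step is to iterate the construction: apply the cotorsion pair $(\X,\V)$ to $C$ and invoke (ET4) once more to compare $\pi$ with a new deflation whose target is closer to $\Z$. The hypothesis that $\s$ is closed under cones ensures that each successive cone stays in $\s$, so all the new auxiliary morphisms belong to $\R$ and are invertible in $\B/\s$. After finitely many steps one obtains a deflation $Z\to Z'$ between two objects of $\Z$ whose cocone lies in $\Y$; its defining class lies in $\EE(Z',\Y)=0$ (because $Z'\in\U$), hence the triangle splits. Up to isomorphism the deflation then becomes the split projection $\svech{1}{0}\colon Z'\oplus Y''\to Z'$, whose invertibility in $\B/\s$ is exactly Lemma~\ref{BML}(1). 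Chaining all these invertibilities yields that $\underline u$ is invertible, which completes the plan; the delicate point is to control the iteration and verify at each stage that the relevant intermediate cone lies in $\s$, which is precisely where the assumption that $\s$ is closed under cones enters.
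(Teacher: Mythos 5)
Your first two reductions are correct: using $(\X,\V)$ on $U\in\U$ produces $U\xrightarrow{v}V\to X\dashrightarrow$ with $V\in\Z$ and $X\in\X\subseteq\s$, so $\underline v$ is invertible; and applying (ET4) to $Y\to U\xrightarrow{v}V$ yields $\gamma:A\to C$ with cone $X\in\s$, so $\underline\gamma$ is invertible, reducing the problem to the invertibility of $\underline\pi:V\to C$. The gap is in the last stage. You assert that the cone-closure hypothesis keeps all successive cones in $\s$, but the very first cone you need to control, $C$, is the cone of $Y\to V$, and while $Y\in\Y\subseteq\s$, the object $V$ lies in $\Z=\U\cap\V$, \emph{not} in $\s$. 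In fact $\Z\cap\s=\W$ in this setting (if $Z\in\Z\cap\s$, the $(\X,\Y)$-resolution $Z\to Y_Z\to X_Z\dashrightarrow$ in $\s$ splits because $\EE(X_Z,Z)=0$ for $Z\in\V=\X^{\bot_1}$, forcing $Z\in\Y$ and thus $Z\in\W$), so for a generic $U\in\U$ your $V$ is not in $\s$, and cone-closure gives no information about $C$. Because of this the proposed iteration never gets started: there is no bound, no measure that decreases, and no reason the intermediate cones ever land in $\s$.

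The paper sidesteps this entirely by resolving $U$ with the \emph{other} cotorsion pair $(\U,\Y)$: since $U\in\U$, the triangle $U\xrightarrow{w}Y_U\to U_U\dashrightarrow$ has $Y_U\in\Y\cap\U=\W$ (extension-closure of $\U$), so one may write $U\xrightarrow{w}W\to U'\dashrightarrow$ with $W\in\W\subseteq\s$. Then in the (ET4) diagram the cone $S$ of $Y\to W$ genuinely lies in $\s$ because \emph{both} $Y$ and $W$ are in $\s$, which is exactly what the cone-closure hypothesis is for. Finally \cite[Proposition 1.20]{LN} extracts the $\EE$-triangle $U\xrightarrow{\svecv{-u}{w}}A\oplus W\to S\dashrightarrow$, so $U\to A\oplus W$ is in $\R$, and composing with the split projection $A\oplus W\to A$ (invertible by Lemma~\ref{BML}(1) since $W\in\s$) gives that $-u$, hence $u$, is invertible. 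The moral is that the choice of resolution of $U$ matters: the $(\U,\Y)$-side places the middle term in $\W\subseteq\s$ and makes the cone-closure applicable, whereas the $(\X,\V)$-side places it in $\Z$, where $\s$-membership fails. If you adapt your last step by resolving $V\in\U$ via $(\U,\Y)$ you recover the paper's argument, but then the initial detour through $(\X,\V)$ was unnecessary and you should apply it directly to $u$.
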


\begin{proof}
Since $U$ admits an $\EE$-triangle $U\xrightarrow{w} W\to U'\dashrightarrow$ with $W\in \W$ and $U'\in \U$, we have the following commutative diagram
$$\xymatrix@C=0.8cm@R0.7cm{
Y\ar[r] \ar@{=}[d] &U \ar[r]^u \ar[d]^{w} &A \ar[d]^a \ar@{-->}[r] &\\
Y \ar[r]  &W \ar[r]^s \ar[d] &S \ar[d] \ar@{-->}[r] &\\
&U' \ar@{=}[r]  \ar@{-->}[d] &U' \ar@{-->}[d]\\
&&
}
$$
with $S\in \s$,  since $\s$ is closed under taking cones. By \cite[Proposition 1.20]{LN}, we can choose a morphism $s':W\to S$ to make an $\EE$-triangle $U\xrightarrow{\svecv{-u}{w}} A\oplus W\xrightarrow{\svech{a}{s'}} S\dashrightarrow$. Hence $U\xrightarrow{\svecv{-u}{w}} A\oplus W$ is invertible in $\B/\s$. By Lemma \ref{BML}, $-u=\svech{1}{0}\circ \svecv{-u}{w}:U\to A$ is invertible in $\B/\s$, so is $u$.
\end{proof}

\begin{rem}
Let $f:B\to C$ be any morphism in $\R$. In the following commutative diagrams
$$\xymatrix@C=0.8cm@R0.7cm{
B \ar[r]^{v_B} \ar[d]^f &V_B \ar[r] \ar[d]^{v_f} &X_B \ar[d] \ar@{-->}[r] &\\
C \ar[r]^{v_C} &V_C \ar[r] &X_C \ar@{-->}[r] &,\\
}\quad
\xymatrix@C=0.8cm@R0.7cm{
Y^B \ar[r] \ar[d] &Z_B \ar[r]^{z_B} \ar[d]^{z_f} &V_B \ar[d]^{v_f} \ar@{-->}[r] &\\
Y^C \ar[r] &Z_C  \ar[r]^{z_C} &V_C \ar@{-->}[r] &
}
$$
$\underline v_f$ is invertible. Moreover, since $\underline z_B$ and $\underline z_C$ are invertible by Corollary \ref{BMLcor}, $\underline z_f$ is also invertible in $\B/\s$.
\end{rem}

\begin{lem}\label{lem}
Let $f:B\to C$ be any morphism in $\R$. Then we have the following commutative diagram in $\B/\s$
$$\xymatrix@C=0.8cm@R0.7cm{
C \ar[r]^{\underline v_C} \ar[d]^{\underline f^{-1}} &V_C \ar[r]^{\underline z_C^{-1}} \ar[d]^{ \underline v_f^{-1}} &Z_C \ar[d]^{\underline z}\\
B \ar[r]_{\underline v_B} &V_B \ar[r]_{\underline z_B^{-1}} &Z_B
}
$$
where $z:Z_C\to Z_B$ is a morphism in $\Z$ such that $\overline z=G(f)^{-1}$.
\end{lem}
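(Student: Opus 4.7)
The plan is to collect the invertibility data first, then to establish the key identity $\underline z=\underline{z_f}^{-1}$ in $\B/\s$, and finally to read off commutativity of each square from the defining commutative diagrams of $v_f$ and $z_f$. Every morphism labelling an arrow in the diagram is already known to be invertible in $\B/\s$: $f\in\R$ by hypothesis, and $v_B,v_C$ lie in $\R$ because their cofibres $X_B,X_C$ belong to $\X\subseteq\s$; $\underline{z_B},\underline{z_C}$ are invertible by Corollary~\ref{BMLcor}; and the invertibility of $\underline{v_f}$ and $\underline{z_f}$ is the content of the Remark immediately preceding the statement.

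The central step is to transfer the identity $\overline z=G(f)^{-1}=\overline{z_f}^{-1}$ from $\Z/[\W]$ to the relation $\underline z=\underline{z_f}^{-1}$ in $\B/\s$. By definition of $\overline{(-)}$, the differences $zz_f-1_{Z_B}$ and $z_fz-1_{Z_C}$ factor through $\W\subseteq\s$; applying Lemma~\ref{BML}(2) with these differences as the morphism $f'$ (and the relevant identities as $f$) promotes these factorizations into the equalities $\underline{zz_f}=\underline{1_{Z_B}}$ and $\underline{z_fz}=\underline{1_{Z_C}}$ in $\B/\s$, which gives the desired inverse relation.

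Once these invertibility facts are in place, both squares follow formally. From the defining diagram of $v_f$ one has $v_C\circ f=v_f\circ v_B$ in $\B$; cancelling $\underline{v_f}$ on the left and $\underline f$ on the right after passing to $\B/\s$ yields $\underline{v_f}^{-1}\circ\underline{v_C}=\underline{v_B}\circ\underline f^{-1}$, i.e.\ the left square. From the defining diagram of $z_f$ one has $v_f\circ z_B=z_C\circ z_f$; rearranging analogously provides $\underline{z_B}^{-1}\circ\underline{v_f}^{-1}=\underline{z_f}^{-1}\circ\underline{z_C}^{-1}=\underline z\circ\underline{z_C}^{-1}$, which is the right square. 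The only non-mechanical input is Lemma~\ref{BML}(2), which bridges equalities modulo $\W$ and equalities modulo $\s$; beyond this, the argument is pure bookkeeping with the invertibility data.
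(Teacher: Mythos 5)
Your proof is correct and takes essentially the same approach as the paper's: both arguments use Lemma~\ref{BML}(2) to promote the fact that $1_{Z_C}-z_fz$ (resp.\ $1_{Z_B}-zz_f$) factors through $\W\subseteq\s$ into the identity $\underline z=\underline{z_f}^{-1}$ in $\B/\s$, and then read off the squares from the defining commutative diagrams $v_fv_B=v_Cf$ and $v_fz_B=z_Cz_f$. The paper is slightly more terse (it only records $\underline{z_fz}=\underline{1_{Z_C}}$, composes $v_fz_B=z_Cz_f$ with $z$ before applying $L_\R$, and leaves the left square implicit), but the content is the same.
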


\begin{proof}
By Proposition \ref{im}, $G(f)=\overline z_f$ is an isomorphism, let $\overline z=G(f)^{-1}$. Then $1_{Z_C}-z_fz$ factors through $\W$. So by Lemma \ref{BML}, we have $\underline 1_{Z_C}=\underline {z_fz}$ in $\B/\s$.
By the following commutative diagram
$$\xymatrix@C=0.8cm@R0.7cm{
B \ar[r]^{v_B} \ar[d]_f &V_B \ar[d]^{v_f} &Z_B \ar[l]_{z_B} \ar[d]^{z_f}\\
C \ar[r]_{v_C} &V_C &Z_C \ar[l]_{z_C}
}
$$
we have $v_fz_Bz=z_Cz_fz$. By applying $L_\R$ to this equation, we get $\underline v_f\underline z_B\underline z=\underline z_C$, then $\underline z\underline z_C^{-1}=\underline z_B^{-1}\underline v_f^{-1} $. Hence  we obtain the desired commutative diagram.
\end{proof}

We now give the proof of Theorem \ref{main5}.

\begin{proof}[Proof of Theorem~\ref{main5}.]
We show that $H$ is an equivalence. Since $G|_{\Z}$ is identical on the objects, we know that $H$ is dense.

We show that $H$ is faithful. Let $\alpha:B\to C$ be any morphism in $\B/\s$. It has the form $B\xrightarrow{\beta_0} D_1\xrightarrow{\beta_1} \cdot\cdot\cdot \xrightarrow{\beta_{n-1}} D_n\xrightarrow{\beta_n} C$ where $\beta_i$ is a morphism $\underline {f_i}$ or a morphism $\underline {g_i}^{-1}$ with $g_i\in \R$. We have a commutative diagram
$$\xymatrix@C=0.8cm@R0.7cm{
B \ar[d]_{\underline z_B^{-1}\underline v_B} \ar[r]^{\beta_0} &D_1 \ar[r]^{\beta_1} \ar[d] &\cdot \cdot \cdot \ar@{}[d]^{\cdots}_{\cdots} \ar[r]^{\beta_{n-1}} &D_n\ar[r]^{\beta_n} \ar[d] &C \ar[d]^{\underline z_C^{-1}\underline v_C}\\
Z_B \ar[r]^{\underline z_0} &Z_1 \ar[r]^{\underline z_1} &\cdot \cdot \cdot \ar[r]^{\underline z_{n-1}} &Z_n\ar[r]^{\underline z_n} &Z_C
}
$$
where $Z_i\in \Z$ and $z_i$ are morphisms in $\Z$ by Lemma \ref{lem}. Denote $z_n z_{n-1}\cdot\cdot\cdot z_1z_0$ by $\zeta$, we have $\alpha=\underline v_C^{-1}\underline z_C\underline \zeta\underline z_B^{-1}\underline v_B$. If there exists a morphism $\alpha':B\to C$ in $\B/\s$ such that $H(\alpha)=H(\alpha')$,  then we also have $\alpha'=\underline v_C^{-1}\underline z_C\underline \zeta'\underline z_B^{-1}\underline v_B$ with some $\zeta'\in \Hom_\B(Z_B,Z_C)$. Since $H(\underline v_C)=H(\underline z_C)=\overline 1_{Z_C}$ and $H(\underline v_B)=H(\underline z_B)=\overline 1_{Z_B}$ by Remark \ref{rem1}, we can get that $\overline {\zeta}=H(\underline {\zeta})=H(\underline {\zeta'})=\overline {\zeta'}$. Hence $\zeta-\zeta'$ factors through $\W$, which implies $\underline {\zeta}=\underline {\zeta'}$. Thus $\alpha=\alpha'$.

Finally we show that $H$ is full. Let $\gamma:H(B)\to H(C)$ be any morphism. By {\red a} similar argument as above, we can get that $\gamma=\overline z$ where $z$ is a morphism in $\Z$. Since we have the following commutative diagram in $\B/\s$:
$$\xymatrix@C=0.8cm@R0.7cm{
B \ar[r]^{\underline v_B} \ar[d]_{\alpha} &V_B \ar[d]^{\underline {z_Czz_B^{-1}}} &Z_B \ar[l]_-{\underline z_B} \ar[d]^{\underline z} \ar@{=}[r] &H(B)\\
C \ar[r]_{\underline v_C} &V_C &Z_C \ar[l]^-{\underline z_C} \ar@{=}[r] &H(C)
}
$$
we have $H(\alpha)=H(\underline v_C)^{-1}H(\underline z_C)H(\underline z)H(\underline z_B)^{-1}H(\underline v_B)$, then $H(\alpha)=H(\underline z)=\overline z=\gamma$, hence $H$ is full.
\end{proof}


Since we have the following commutative diagram
$$\xymatrix{
\Z \ar[r]^{i} \ar[dr]_{\pi} &\B \ar[r]^-{L_\R} &\B/\s\\
 &\Z/[\W] \ar@{.>}[ur]_{F}
}
$$
where $i$ is the embedding functor and $\pi$ is the canonical quotient functor, we can get the following proposition.

\begin{prop}\label{quasi}
$F$ is a quasi-inverse of $H$.
\end{prop}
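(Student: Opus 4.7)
The plan is to verify that $F$ is a two-sided quasi-inverse of $H$ by exhibiting natural isomorphisms $H\circ F\cong \Id_{\Z/[\W]}$ and $F\circ H\cong \Id_{\B/\s}$. The core work has already been carried out in the proof of Theorem \ref{main5}; what remains is essentially bookkeeping.

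For $H\circ F$, I would start with the observation that for any $Z\in\Z=\U\cap\V$ the identity $1_Z$ is already both a (minimal) left $\V$-approximation and a (minimal) right $\U$-approximation of $Z$. Consequently the fixed $\EE$-triangles defining $G$ can be chosen, up to isomorphism, as
\[
Z\xrightarrow{1_Z} Z\to 0\dashrightarrow,\qquad 0\to Z\xrightarrow{1_Z} Z\dashrightarrow,
\]
so that $V_Z=Z_Z=Z$ and $X_Z=Y^Z=0$. Tracing through the defining commutative diagrams of $G$ for a morphism $z:Z_1\to Z_2$ in $\Z$ then forces $v_z=z_z=z$, hence $G(z)=\overline{z}$. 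Since $F(\overline z)=L_\R(z)=\underline{z}$ and $H\circ L_\R=G$, this gives $H(F(\overline z))=\overline z$. The identification $HF\cong\Id_{\Z/[\W]}$ follows (strictly, if the fixed triangles are chosen canonically; otherwise via the natural isomorphism induced by minimality).

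For $F\circ H$, the plan is to define, for every $B\in\B$,
\[
\eta_B\;:=\;\underline{z_B}^{-1}\,\underline{v_B}\;:\;B\longrightarrow Z_B\;=\;FH(B).
\]
Here $\underline{v_B}$ is invertible in $\B/\s$ because the fixed $\EE$-triangle $B\xrightarrow{v_B}V_B\to X_B\dashrightarrow$ satisfies $X_B\in\X\subseteq\s$, so $v_B\in\R$; and $\underline{z_B}$ is invertible by Corollary~\ref{BMLcor} applied to the $\EE$-triangle $Y^B\to Z_B\xrightarrow{z_B}V_B\dashrightarrow$ (which has $Y^B\in\Y$ and $Z_B\in\Z\subseteq\U$). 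Therefore each $\eta_B$ is an isomorphism.

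Naturality of $\eta$ is precisely the content of the factorization established in the proof of Theorem~\ref{main5}: any morphism $\alpha:B\to C$ in $\B/\s$ can be written as
\[
\alpha\;=\;\underline{v_C}^{-1}\,\underline{z_C}\,\underline{\zeta}\,\underline{z_B}^{-1}\,\underline{v_B}
\]
for some $\zeta\in\Hom_\B(Z_B,Z_C)$ with $\overline{\zeta}=H(\alpha)$. A one-line cancellation yields $\eta_C\circ\alpha=\underline{\zeta}\circ\eta_B=FH(\alpha)\circ\eta_B$, as required. The only mild subtlety is matching the conventions on fixed minimal approximations so that $HF$ acts as the identity on the nose; this causes no real obstruction since any two minimal approximations are isomorphic, and the composite can at worst differ from $\Id$ by a natural isomorphism, which is what we need anyway.
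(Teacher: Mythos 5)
Your proof is correct and takes essentially the same route as the paper: $HF=\Id_{\Z/[\W]}$ follows because $G|_{\Z}$ coincides with the quotient functor $\pi$ (since for $Z\in\Z=\U\cap\V$ the identity serves as both fixed approximations), and $FH\cong\Id_{\B/\s}$ via the natural isomorphism built from $\underline{v_B}$ and $\underline{z_B}$, whose naturality is exactly the factorization $\alpha=\underline{v_C}^{-1}\underline{z_C}\,\underline{\zeta}\,\underline{z_B}^{-1}\underline{v_B}$ already extracted in the proof of Theorem~\ref{main5}. The only cosmetic difference is the direction of the isomorphism: you use $\eta_B=\underline{z_B}^{-1}\underline{v_B}\colon B\to FH(B)$, whereas the paper writes its inverse $\underline{v_B}^{-1}\underline{z_B}\colon FH(B)\to B$.
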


\begin{proof}
By the proof of Theorem \ref{main5} we can obtain $HF=\Id_{\Z/[\W]}$. Let $\alpha:B\to C$ be any morphism in $\B/\s$. Then we have the following commutative diagram
$$\xymatrix{
FH(B) \ar[rr]^{\underline v_B^{-1} \underline z_B}_{\simeq} \ar[d]_{FH(\alpha)} &&B \ar[d]^{\alpha}\\
FH(C) \ar[rr]_{\underline v_C^{-1} \underline z_C}^{\simeq} &&C
}
$$
which implies $FH\cong \Id_{\B/\s}$.
\end{proof}

\begin{rem}
Assume that $\B$ has enough injectives. Let $\M$ be a contravariantly finite subcategory which contains all the injective objects and is closed under extensions, direct summands and taking cones. Then $(\M,\mathcal I)$ is a cotorsion pair in $\M$. Moreover, we have a twin cotorsion pair $((\M,\M^{\bot_1}),(\B,\mathcal I))$ in $\B$. By Theorem \ref{main5}, $\M^{\bot_1}/[\mathcal I]\simeq \B/\M$.

Dually, assume that $\B$ has enough projectives. Let $\N$ be a covariantly finite subcategory which  contains all the projective objects and is closed under extensions, direct summands and taking cocones. Then $(\mathcal P,\N)$ is a cotorsion pair in $\N$. Moreover, we have a twin cotorsion pair $((\mathcal P,\B),({^{\bot_1}}\N,\N))$ in $\B$ and ${^{\bot_1}}\N/[\mathcal P]\simeq \B/\N$.

In particular, if $\B$ is a triangulated category and $\M$ is a contravariantly (resp. covariantly) finite thick subcategory, then the Verdier quotient $\B/\M$ is a triangle equivalent to $\M^{\bot_1}$ (resp. ${^{\bot_1}\M}$), which is a thick subcategory of $\B$.
\end{rem}

\begin{rem}
There are many examples of extriangulated subcategories which have enough injectives. For example, any right triangulated category with a right semi-equivalence is an extriangulated category with enough injectives, see \cite[Corollary 3.13]{T}.
\end{rem}

\subsection{Other localizations}

%

 In this subsection, we assume that $\s$ is closed under taking cones. We first show a useful proposition.

\begin{prop}\label{Sigma}
For any $\EE$-triangle $Y\to A\xrightarrow{f} B\dashrightarrow$ with $Y\in \Y$, $G(f)$ is an isomorphism.
\end{prop}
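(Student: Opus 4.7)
My plan is to prove $G(f)=\overline{z_f}$ is an isomorphism in $\Z/[\W]$ by first producing a right inverse via a pullback-splitting argument driven by $\EE(Z_B,Y)=0$ and the right-minimality of $z_B$, and then separately verifying $Z_A\simeq Z_B$ in $\oB$ via a horseshoe-type comparison for right $\U$-approximations; Krull--Schmidt in $\oB$ will then promote the split epi $\overline{z_f}$ to an honest isomorphism. The two driving facts are $\Y\subseteq\V$ (from the twin cotorsion pair condition $\X\subseteq\U$, giving $\Y=\U^{\bot_1}\subseteq\X^{\bot_1}=\V$) and $\EE(\U,\Y)=0$.

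The geometric input is ET4 applied to the composable inflations $Y\to A\xrightarrow{v_A} V_A$: together with the $\EE$-triangles $Y\to A\to B\dashrightarrow$ and $A\to V_A\to X_A\dashrightarrow$, this produces an $\EE$-triangle $Y\to V_A\to N\dashrightarrow$ with $N\in\V$ (extension-closure of $\V$), together with an auxiliary $\EE$-triangle $B\to N\to X_A\dashrightarrow$. The latter exhibits $B\to N$ as a left $\V$-approximation of $B$, and the minimality of $v_B$ forces $N\simeq V_B\oplus W$ with $W\in\X\cap\V=\W$. Since $\EE(W,Y)\subseteq\EE(\X,\V)=0$, the extension class of $Y\to V_A\to V_B\oplus W\dashrightarrow$ has trivial $W$-component, so the $W$-summand splits off and yields an $\EE$-triangle $Y\to V'_A\to V_B\dashrightarrow$ with $V_A\simeq V'_A\oplus W$; taking $v_f$ to be the resulting map $V_A\to V_B$ is a valid representative by Lemma~\ref{unique}. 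Pulling this triangle back along $z_B$ produces a split $\EE$-triangle (since $\EE(Z_B,Y)=0$), hence a morphism $\tau:Z_B\to V_A$ satisfying $v_f\tau=z_B$; lifting $\tau$ through $z_A$ (permitted because $Z_B\in\U$ and $z_A$ is a right $\U$-approximation) gives $s:Z_B\to Z_A$ with $z_A s=\tau$, whence $z_Bz_f s=v_f z_A s=v_f\tau=z_B$. Right-minimality of $z_B$ (ensured by Lemma~\ref{deflation}) then forces $z_f s$ to be an isomorphism in $\B$, so $\overline{z_f}$ admits a right inverse in $\Z/[\W]$.

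For the isomorphism $Z_A\simeq Z_B$ in $\oB$, first observe that $Z_Y\in\W$ for $Y\in\Y$: the $\U$-approximation triangle $Y'\to Z_Y\to Y\dashrightarrow$ has $Y,Y'\in\Y$, and $\Y$ is extension-closed (Lemma~\ref{L1}), so $Z_Y\in\U\cap\Y=\W$. Applying the horseshoe construction (a further ET4) to $Y\to V'_A\to V_B\dashrightarrow$ using the approximations $Z_Y\to Y$ and $Z_B\to V_B$ produces an $\EE$-triangle $Y^{\sharp}\to Z_Y\oplus Z_B\to V'_A\dashrightarrow$ with $Y^{\sharp}\in\Y$, exhibiting $Z_Y\oplus Z_B\to V'_A$ as a non-minimal right $\U$-approximation of $V'_A$. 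Comparing with the minimal approximation $Z_{V'_A}\to V'_A$ yields $Z_Y\oplus Z_B\simeq Z_{V'_A}\oplus W''$ where the correction term $W''$ is a summand both of $Y^{\sharp}$ (hence in $\Y$) and of $Z_Y\oplus Z_B$ (hence in $\U$), so $W''\in\W$. Combined with $Z_Y\in\W$ and $V_A\simeq V'_A\oplus W$, we get $Z_A\simeq Z_{V'_A}\simeq Z_B$ in $\oB$; together with $\overline{z_f}$ being a split epi, Krull--Schmidt in $\oB$ forces $\overline{z_f}$ to be an isomorphism. The main obstacle is the careful bookkeeping of the $\W$-summands absorbed at two distinct points (into $V_A\simeq V'_A\oplus W$ and into the horseshoe correction $W''$), and the verification via Lemma~\ref{unique} that the $V'_A\to V_B$ in the split triangle represents the canonical $v_f$ defining $z_f$, so that the right-minimality argument applies in $\B$ rather than only in $\oB$.
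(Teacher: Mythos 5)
Your proof is correct, but it takes a genuinely different route from the paper's. The paper's argument is a short reduction to Proposition~\ref{im}: push out $Y\to A\xrightarrow{f}B\dashrightarrow$ along the approximation $A\to Y_A$ coming from the cotorsion pair $(\U,\Y)$, note that the resulting cone $S$ lies in $\s$ precisely because $\s$ is closed under taking cones (both $Y$ and $Y_A$ are in $\Y\subseteq\s$), use \cite[Proposition~1.20]{LN} to produce an $\EE$-triangle $A\to B\oplus Y_A\to S\dashrightarrow$, and then apply Proposition~\ref{im} together with the fact that $G(Y_A)=0$ (since $Z_{Y_A}\in\W$) to conclude. By contrast, you bypass Proposition~\ref{im} entirely: you manufacture $v_f$ from the ET4 square and the splitting $V_A\simeq V'_A\oplus W$, pull back $Y\to V'_A\to V_B\dashrightarrow$ along $z_B$ and split using $\EE(\U,\Y)=0$ to get a right inverse $\overline{s}$ of $\overline{z_f}$ -- importantly, detected at the level of $\B$ via right-minimality of $z_B$ -- and then compare $Z_A$ with $Z_B$ through the horseshoe construction and Krull--Schmidt cancellation in $\Z/[\W]$. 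Each step checks out: the decomposition $N\simeq V_B\oplus W$ with $W\in\X\cap\V=\W$, the block-diagonalization of the map $V_A\to V_B\oplus W$, the lift of $\tau$ through $z_A$, the observation $Z_Y\in\W$ for $Y\in\Y$, and the identification of the horseshoe correction term $W''$ as lying in $\U\cap\Y=\W$ are all valid. Your version is considerably longer, but it is self-contained (no appeal to the intricate proof of Proposition~\ref{im}) and, notably, never actually uses the standing assumption of this subsection that $\s$ is closed under taking cones; it only needs the twin cotorsion pair and $\X\cap\V=\U\cap\Y$ from Lemma~\ref{cap}. So your argument establishes the statement under formally weaker hypotheses, at the cost of redoing approximation-theoretic work that the paper already packaged once inside Proposition~\ref{im}.
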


\begin{proof}
Since $A$ admits an $\EE$-triangle $A\to Y_A\to U_A\dashrightarrow$ with $Y_A\in \Y$ and $U_A\in \U$, we have the following commutative diagram
$$\xymatrix@C=0.8cm@R0.7cm{
Y \ar[r] \ar@{=}[d] &A \ar[r]^f \ar[d] &B \ar[d] \ar@{-->}[r] &\\
Y \ar[r] &Y_A \ar[r] \ar[d] &S \ar[d] \ar@{-->}[r] &.\\
&U_A \ar@{=}[r] \ar@{-->}[d] &U_A \ar@{-->}[d]\\
&&
}
$$
 Since $\s$ is closed under taking cones, we have $S\in \s$. By \cite[Proposition 1.20]{LN}, there is an $\EE$-triangle $A\xrightarrow{\svecv{-f}{*}} B\oplus Y_A\to S\dashrightarrow$. By Proposition \ref{im}, we get that $G(f)$ is an isomorphism.
\end{proof}

Denote by $\mathbb{W}_1$ the following class of morphisms:
$$\{f:A\to B \mid \exists \text{ } \EE \text{-triangle } \xymatrix@C=0.5cm@R0.5cm{A\ar[r]^{f} &B\ar[r] &X \ar@{-->}[r]&} \text{ with } X\in \X \}.$$
Denote by $\mathbb{W}_2$ the following class of morphisms:
$$\{g:C\to D \mid \exists \text{ } \EE \text{-triangle } \xymatrix@C=0.5cm@R0.5cm{Y\ar[r] &C\ar[r]^g &D \ar@{-->}[r]&} \text{ with } Y\in \Y \}.$$
Denote by $\mathbb{W}$ the following class of morphisms:
$$\{h=g\circ f \mid f\in \mathbb{W}_1 \text{ and }g\in \mathbb{W}_2\}.$$
Denote by $\B[\mathbb{W}^{-1}]$ the Gabriel-Zisman localization of $\B$ with respect to $\mathbb{W}$,  and by $L:\B\to \B[\mathbb{W}^{-1}]$ the localization functor.

\begin{rem}
If $\s$ is a thick subcategory, then by Lemma \ref{L3} we have $\s_R=\s_L$ and $((\X,\V),(\U,\Y))$ is a Hovey twin cotorsion pair. $\B[\mathbb{W}^{-1}]$ becomes the localization discussed in \cite[Section 5]{NP}.
\end{rem}

Since by  Propositions \ref{im} and \ref{Sigma}, morphisms in $\mathbb{W}_1$ and $\mathbb{W}_2$ become invertible in $\B/\s$, there exists a unique functor $F_1:\B[\mathbb{W}^{-1}]\to \B/\s$ such that $F_1L=L_\R$. On the other hand, we have the following proposition.

\begin{prop}\label{RW}
Morphisms in $\R$ are invertible in $\B[\mathbb{W}^{-1}]$.
\end{prop}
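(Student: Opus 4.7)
The plan is to show the stronger statement that every $f \in \R$ already lies in $\mathbb{W}$, after which invertibility in $\B[\mathbb{W}^{-1}]$ is immediate from the definition of the localization. Since $\mathbb{W} = \mathbb{W}_2 \circ \mathbb{W}_1$, it suffices to factor $f$ as $B \xrightarrow{b} M \xrightarrow{m} C$ where the cone of $b$ lies in $\X$ and the cocone of $m$ lies in $\Y$.

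Given $f:B\to C$ with associated $\EE$-triangle $B\xrightarrow{f}C\xrightarrow{c}S\dashrightarrow$ and $S\in\s$, the hypothesis that $(\X,\Y)$ is a cotorsion pair in $\s$ yields an $\EE$-triangle $Y\to X\xrightarrow{x} S\dashrightarrow$ (inside $\s$) with $X\in\X$ and $Y\in\Y$. Applying the $3\times 3$ lemma for extriangulated categories (in the form of \cite[Proposition 3.15]{NP}) to these two $\EE$-triangles sharing the third term $S$, one produces a commutative diagram
$$\xymatrix@C=0.8cm@R0.7cm{
 & Y \ar@{=}[r] \ar[d] & Y \ar[d] \\
B \ar@{=}[d] \ar[r]^{b} & M \ar[r] \ar[d]^{m} & X \ar[d]^{x} \ar@{-->}[r] & \\
B \ar[r]^{f} & C \ar[r]^{c} \ar@{-->}[d] & S \ar@{-->}[d] \ar@{-->}[r] & \\
& &
}$$
in which the middle row is an $\EE$-triangle $B\to M\to X\dashrightarrow$ and the middle column is an $\EE$-triangle $Y\to M\to C\dashrightarrow$. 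The commutativity of the lower-left square gives $f = m\circ b$.

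By the definitions of $\mathbb{W}_1$ and $\mathbb{W}_2$, we have $b\in\mathbb{W}_1$ (its cone is $X\in\X$) and $m\in\mathbb{W}_2$ (its cocone is $Y\in\Y$), hence $f = m\circ b \in \mathbb{W}$ and $L(f)$ is invertible. The only delicate point is to invoke the $3\times 3$ lemma with the correct orientation so that $X$ and $Y$ end up in the row and column that feed into $\mathbb{W}_1$ and $\mathbb{W}_2$ respectively; this is the key bookkeeping step and no further ideas are required.
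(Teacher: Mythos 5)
Your argument is correct and essentially identical to the paper's proof: both factor $f$ through a pullback-type object $M$ (the paper calls it $D$) obtained by combining the defining $\EE$-triangle of $f\in\R$ with an $\EE$-triangle $Y\to X\to S\dashrightarrow$ coming from the cotorsion pair $(\X,\Y)$ in $\s$, thereby exhibiting $f=m\circ b$ with $b\in\mathbb{W}_1$ and $m\in\mathbb{W}_2$, hence $\R\subseteq\mathbb{W}$. The only difference is cosmetic (variable names and explicitly naming the diagram lemma from \cite{NP}).
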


\begin{proof}
Let $f:A\to B$ be a morphism in $\R$. Then it admits an $\EE$-triangle $A\xrightarrow{f} B\to S\dashrightarrow$ with $S\in\s$. Since $S$ admits an $\EE$-triangle $Y\to X\to S\dashrightarrow$ with $Y\in \Y$ and $X\in \X$, we have the following commutative diagram:
$$\xymatrix@C=0.8cm@R0.7cm{
&Y \ar@{=}[r] \ar[d] &Y \ar[d]\\
A \ar[r]^{d_1} \ar@{=}[d] &D \ar[r] \ar[d]^{d_2} &X \ar[d] \ar@{-->}[r]&\\
A \ar[r]_f &B \ar[r] \ar@{-->}[d] &S \ar@{-->}[r] \ar@{-->}[d] &\\
&&
}
$$
 which implies $f\in \mathbb{W}$. Hence $\R\subseteq \mathbb{W}$ and morphisms in $\R$ are invertible in $\B[\mathbb{W}^{-1}]$.
\end{proof}

By this proposition, there exists a unique functor $F_2: \B/\s\to \B[\mathbb{W}^{-1}]$ such that $F_2L_\R=L$. Hence $L=F_2F_1L$ and $L_\R=F_1F_2L_\R$. By the universal property of the localization functors, we have $F_2F_1=\Id_{\B[\mathbb{W}^{-1}]}$ and $F_2F_1=\Id_{\B/\s}$. This means $\B/\s$ is  equivalent to $\B[\mathbb{W}^{-1}]$.\\

We claim that when $\s$ is closed under taking cones, $((\X,\V),(\U,\Y))$ is a generalized Hovey twin cotorsion pair  in the sense of \cite[Definition 3.9]{O}. By \cite[Definition 3.9 and Theorem 3.10]{O}, we only need to check the following fact:
$$\text{For an }\EE\text{-triangle }Y\to U\xrightarrow{u} B\dashrightarrow \text{ with }Y\in \Y, U\in \U,G(u)\text{ is an isomorphism in }\Z/[\W].$$
This is just followed by Proposition \ref{Sigma}.\\

Denote by $\mathbb{V}_1$ the following class of morphisms:
$$\{f:A\to V \mid \exists \text{ } \EE \text{-triangle } \xymatrix@C=0.5cm@R0.5cm{A\ar[r]^{f} &V\ar[r] &X \ar@{-->}[r]&} \text{ with } V\in \V,X\in \X \}.$$
Denote by $\mathbb{V}_2$ the following class of morphisms:
$$\{g:U\to B \mid \exists \text{ } \EE \text{-triangle } \xymatrix@C=0.5cm@R0.5cm{Y\ar[r] &U\ar[r]^g &B \ar@{-->}[r]&} \text{ with } Y\in \Y,U\in \U \}.$$
Denote by $\mathbb{V}$ the following class of morphisms:
$$\{h=g\circ f \mid f\in \mathbb{V}_1 \text{ and }g\in \mathbb{V}_2\}.$$
Denote by $\B[\mathbb{V}^{-1}]$ the Gabriel-Zisman localization of $\B$ with respect to $\mathbb{V}${\red .} Ogawa \cite[Theorem 3.10]{O} showed that there is an equivalence: $\Phi:\Z/[\W]\to \B[\mathbb{V}^{-1}]$. Then
$$\Phi \circ H: \B/\s\to \B[\mathbb{V}^{-1}] \quad \text{and} \quad \Phi \circ H\circ F_1:\B[\mathbb{W}^{-1}]\to \B[\mathbb{V}^{-1}]$$
are equivalences between localizations.
\medskip

In summary, we have the following proposition.

\begin{prop}
Let $\s$ be an extension closed subcategory which is also closed under taking cones. Let $((\X,\V),(\U,\Y))$ be a twin cotorsion pair such that $(\X,\Y)$ is a cotorsion pair in $\s$. Then we have the following commutative diagram:
$$\xymatrix{
&\B \ar[dl]_-L \ar[d]^{L_\R}  \ar[dr]^G \\
\B[\mathbb{W}^{-1}]\ar[r]_-{F_1}^{\simeq} &\B/\s \ar[r]_-H^{\simeq} &\Z/[\W] \ar[r]_{\Phi}^{\simeq} &\B[\mathbb{V}^{-1}].
}$$
\end{prop}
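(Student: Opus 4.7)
The plan is to assemble into a single diagram the three equivalences $H$, $F_1$, and $\Phi$ that were constructed over the course of this subsection, together with the intertwining identities with the three functors emanating from $\B$. The first piece is the right half of the diagram: by the universal property of $L_\R$ applied to Proposition \ref{im}, there is a unique functor $H:\B/\s\to \Z/[\W]$ satisfying $H\circ L_\R=G$, and Theorem \ref{main5} together with Proposition \ref{quasi} shows $H$ is an equivalence (with explicit quasi-inverse $F$ induced by $\Z\hookrightarrow \B\xrightarrow{L_\R}\B/\s$).

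The second piece is the left equivalence $F_1$. I would use Propositions \ref{im} and \ref{Sigma} to conclude that $L_\R$ inverts every morphism in $\mathbb{W}_1$ and every morphism in $\mathbb{W}_2$, hence every composite in $\mathbb{W}$, so the universal property of $L$ produces a unique $F_1$ with $F_1\circ L=L_\R$. Conversely, Proposition \ref{RW} yields the inclusion $\R\subseteq \mathbb{W}$, giving a unique $F_2$ with $F_2\circ L_\R=L$. The uniqueness clauses in the two universal properties, applied to the composites $F_1 F_2 L_\R = F_1 L = L_\R$ and $F_2 F_1 L = F_2 L_\R = L$, then force $F_1 F_2=\Id_{\B/\s}$ and $F_2 F_1=\Id_{\B[\mathbb{W}^{-1}]}$.

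The third piece is the rightmost equivalence $\Phi$, for which I would invoke \cite[Theorem 3.10]{O}. The only verification needed is that $((\X,\V),(\U,\Y))$ satisfies the generalized Hovey condition of \cite[Definition 3.9]{O}, namely that $G$ sends the deflation of every $\EE$-triangle $Y\to U\to B\dashrightarrow$ with $Y\in\Y$, $U\in\U$ to an isomorphism in $\Z/[\W]$; this is exactly Proposition \ref{Sigma}, and the hypothesis that $\s$ is closed under taking cones enters precisely here. Commutativity of the displayed diagram then follows from the defining equations $L_\R=F_1 L$, $H\circ L_\R=G$, and the compatibility of $\Phi$ with the inclusion $\Z\hookrightarrow \B$ built into Ogawa's construction. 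The main (and essentially only) nontrivial step is the identification of the generalized Hovey condition with Proposition \ref{Sigma}; everything else is bookkeeping with universal properties of Gabriel--Zisman localizations.
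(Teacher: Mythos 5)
Your proposal is correct and assembles exactly the ingredients the paper itself draws upon (the paper states this proposition as a ``summary'' of Subsection~3.4 and does not give a separate proof): $H$ from Theorem~\ref{main5} and Proposition~\ref{quasi}, $F_1$ and $F_2$ from Propositions~\ref{im}, \ref{Sigma}, \ref{RW} together with the universal properties of the two localizations, and $\Phi$ from Ogawa's Theorem~3.10 once the generalized Hovey condition is checked via Proposition~\ref{Sigma}. The only minor imprecision is the remark that the cone-closure hypothesis ``enters precisely'' at the generalized Hovey step --- it is already required for Theorem~\ref{main5} (via Corollary~\ref{BMLcor}) and hence for $H$ to be an equivalence --- but since you invoke Theorem~\ref{main5}, nothing in the argument is affected.
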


\begin{rem}
According to \cite[Theorem 3.14]{O}, if $((\X,\V),(\U,\Y))$ is a Hovey twin cotorsion pair, then we always have an equivalence between $\B[\mathbb{W}^{-1}]$ and $\B[\mathbb{V}^{-1}]$. Note that in general we do not have such equivalence.
\end{rem}

In the following example, the twin cotorsion pair is not Hovey.

\begin{exm}\label{example}
Let $Q\colon 1\to 2\to 3\to 4$ be the quiver of type $A_4$ and $\B:=\mathrm{D}^b(kQ)$ the bounded derived category of $kQ$ whose Auslander-Reiten quiver is the following:
$$\xymatrix@C=0.3cm@R0.2cm{
\cdot\cdot\cdot \ar[dr] &&\circ \ar[dr] &&\circ \ar[dr] &&\circ \ar[dr] &&\circ \ar[dr] &&\circ \ar[dr] &&\circ \ar[dr] &&\bullet \ar[dr] &&\circ \ar[dr] &&\circ \ar[dr]\\
&\circ \ar[ur] \ar[dr] &&\circ \ar[ur] \ar[dr] &&\circ \ar[ur] \ar[dr] &&\circ \ar[ur]  \ar[dr] &&\circ\ar[ur] \ar[dr]  &&\circ \ar[ur] \ar[dr] &&\bullet \ar[ur]  \ar[dr] &&\bullet \ar[ur] \ar[dr] &&\circ \ar[ur] \ar[dr] &&\cdot\cdot\cdot\\
\cdot\cdot\cdot \ar[dr] \ar[ur] &&\circ \ar[ur] \ar[dr] &&\circ \ar[ur] \ar[dr] &&\circ \ar[ur] \ar[dr]  &&\circ \ar[ur] \ar[dr] &&\circ \ar[ur] \ar[dr] &&\bullet  \ar[ur] \ar[dr] &&\bullet \ar[ur] \ar[dr] &&\circ \ar[ur] \ar[dr] &&\circ \ar[ur] \ar[dr]\\
&\circ \ar[ur] &&\circ \ar[ur] &&\circ \ar[ur] &&\circ \ar[ur] &&\circ \ar[ur] &&\bullet \ar[ur] &&\bullet \ar[ur] &&\circ \ar[ur] &&\circ \ar[ur] &&\cdot\cdot\cdot
}
$$
Let $\X$ be the subcategory whose indecomposable objects are marked by $\bullet$ in the diagram. Let $\Y$ be the subcategory whose indecomposable objects are marked by $\clubsuit$ in the diagram ($\clubsuit$ will continue to appear on the right side of the diagram):
$$\xymatrix@C=0.3cm@R0.2cm{
\cdot\cdot\cdot \ar[dr] &&\circ \ar[dr] &&\circ \ar[dr] &&\circ \ar[dr] &&\circ \ar[dr] &&\circ \ar[dr] &&\circ \ar[dr] &&\clubsuit \ar[dr] &&\clubsuit \ar[dr] &&\clubsuit \ar[dr]\\
&\circ \ar[ur] \ar[dr] &&\circ \ar[ur] \ar[dr] &&\circ \ar[ur] \ar[dr] &&\circ \ar[ur]  \ar[dr] &&\circ\ar[ur] \ar[dr]  &&\circ \ar[ur] \ar[dr] &&\circ \ar[ur]  \ar[dr] &&\clubsuit \ar[ur] \ar[dr] &&\clubsuit \ar[ur] \ar[dr] &&\cdot\cdot\cdot\\
\cdot\cdot\cdot \ar[dr] \ar[ur] &&\circ \ar[ur] \ar[dr] &&\circ \ar[ur] \ar[dr] &&\circ \ar[ur] \ar[dr]  &&\circ \ar[ur] \ar[dr] &&\circ \ar[ur] \ar[dr] &&\circ  \ar[ur] \ar[dr] &&\clubsuit \ar[ur] \ar[dr] &&\clubsuit\ar[ur] \ar[dr] &&\clubsuit \ar[ur] \ar[dr]\\
&\circ \ar[ur] &&\circ \ar[ur] &&\circ \ar[ur] &&\circ \ar[ur] &&\circ \ar[ur] &&\circ \ar[ur] &&\clubsuit \ar[ur] &&\clubsuit \ar[ur] &&\clubsuit \ar[ur] &&\cdot\cdot\cdot
}
$$
Let $\mathcal S$ be the smallest full subcategory of $\B$ that contains both $\X$ and $\Y$. Then $\s$ is an extension closed subcategory such that $\s[1]\subseteq \s$ (hence $\s$ is closed under taking cones), and $(\X,\Y)$ is a cotorsion pair in $\s$. $\V=\X^{\bot_1}$ is the subcategory whose indecomposable objects are marked by $\spadesuit$ in the diagram: ($\spadesuit$ will continue to appear on the both sides of the diagram):
$$\xymatrix@C=0.3cm@R0.2cm{
\cdot\cdot\cdot \ar[dr] &&\spadesuit \ar[dr] &&\spadesuit \ar[dr] &&\circ \ar[dr] &&\circ \ar[dr] &&\circ \ar[dr] &&\circ \ar[dr] &&\spadesuit \ar[dr] &&\spadesuit \ar[dr] &&\spadesuit \ar[dr]\\
&\spadesuit \ar[ur] \ar[dr] &&\spadesuit \ar[ur] \ar[dr] &&\spadesuit \ar[ur] \ar[dr] &&\circ \ar[ur]  \ar[dr] &&\circ\ar[ur] \ar[dr]  &&\circ \ar[ur] \ar[dr] &&\circ \ar[ur]  \ar[dr] &&\spadesuit \ar[ur] \ar[dr] &&\spadesuit \ar[ur] \ar[dr] &&\cdot\cdot\cdot\\
\cdot\cdot\cdot \ar[dr] \ar[ur] &&\spadesuit \ar[ur] \ar[dr] &&\spadesuit \ar[ur] \ar[dr] &&\spadesuit \ar[ur] \ar[dr]  &&\circ \ar[ur] \ar[dr] &&\circ \ar[ur] \ar[dr] &&\circ  \ar[ur] \ar[dr] &&\spadesuit \ar[ur] \ar[dr] &&\spadesuit \ar[ur] \ar[dr] &&\spadesuit \ar[ur] \ar[dr]\\
&\spadesuit \ar[ur] &&\spadesuit \ar[ur] &&\spadesuit \ar[ur] &&\spadesuit \ar[ur] &&\circ \ar[ur] &&\circ \ar[ur] &&\spadesuit \ar[ur] &&\spadesuit \ar[ur] &&\spadesuit \ar[ur] &&\cdot\cdot\cdot
}
$$
$\U={^{\bot_1}}\Y$ is the subcategory whose indecomposable objects are marked by $\heartsuit$ in the diagram: ($\heartsuit$ will continue to appear on the left side of the diagram):
$$\xymatrix@C=0.3cm@R0.2cm{
\cdot\cdot\cdot \ar[dr] &&\heartsuit\ar[dr] &&\heartsuit \ar[dr] &&\heartsuit \ar[dr] &&\heartsuit \ar[dr] &&\heartsuit \ar[dr] &&\heartsuit \ar[dr] &&\heartsuit \ar[dr] &&\circ \ar[dr] &&\circ \ar[dr]\\
&\heartsuit \ar[ur] \ar[dr] &&\heartsuit \ar[ur] \ar[dr] &&\heartsuit \ar[ur] \ar[dr] &&\heartsuit \ar[ur]  \ar[dr] &&\heartsuit\ar[ur] \ar[dr]  &&\heartsuit \ar[ur] \ar[dr] &&\heartsuit \ar[ur]  \ar[dr] &&\heartsuit \ar[ur] \ar[dr] &&\circ \ar[ur] \ar[dr] &&\cdot\cdot\cdot\\
\cdot\cdot\cdot \ar[dr] \ar[ur] &&\heartsuit \ar[ur] \ar[dr] &&\heartsuit \ar[ur] \ar[dr] &&\heartsuit \ar[ur] \ar[dr]  &&\heartsuit \ar[ur] \ar[dr] &&\heartsuit \ar[ur] \ar[dr] &&\heartsuit  \ar[ur] \ar[dr] &&\heartsuit \ar[ur] \ar[dr] &&\circ\ar[ur] \ar[dr] &&\circ \ar[ur] \ar[dr]\\
&\heartsuit \ar[ur] &&\heartsuit\ar[ur] &&\heartsuit \ar[ur] &&\heartsuit \ar[ur] &&\heartsuit \ar[ur] &&\heartsuit \ar[ur] &&\heartsuit \ar[ur] &&\circ \ar[ur] &&\circ \ar[ur] &&\cdot\cdot\cdot
}
$$
The indecomposable objects in $\Z/[\W]$ are marked by $\maltese$ in the diagram: ($\maltese$ will continue to appear on the left side of the diagram):
$$\xymatrix@C=0.3cm@R0.2cm{
\cdot\cdot\cdot \ar[dr] &&\maltese \ar[dr] &&\maltese \ar[dr] &&\bigstar \ar[dr] &&\bigstar \ar[dr] &&\bigstar \ar[dr] &&\bigstar \ar[dr] &&\circ \ar[dr] &&\circ \ar[dr] &&\circ \ar[dr]\\
&\maltese \ar[ur] \ar[dr] &&\maltese \ar[ur] \ar[dr] &&\maltese \ar[ur] \ar[dr] &&\bigstar \ar[ur]  \ar[dr] &&\bigstar\ar[ur] \ar[dr]  &&\bigstar \ar[ur] \ar[dr] &&\circ \ar[ur]  \ar[dr] &&\circ \ar[ur] \ar[dr] &&\circ \ar[ur] \ar[dr] &&\cdot\cdot\cdot\\
\cdot\cdot\cdot \ar[dr] \ar[ur] &&\maltese \ar[ur] \ar[dr] &&\maltese \ar[ur] \ar[dr] &&\maltese \ar[ur] \ar[dr]  &&\bigstar \ar[ur] \ar[dr] &&\bigstar \ar[ur] \ar[dr] &&\circ  \ar[ur] \ar[dr] &&\circ \ar[ur] \ar[dr] &&\circ \ar[ur] \ar[dr] &&\circ \ar[ur] \ar[dr]\\
&\maltese \ar[ur] &&\maltese \ar[ur] &&\maltese \ar[ur] &&\maltese\ar[ur] &&\bigstar \ar[ur] &&\circ \ar[ur] &&\circ \ar[ur] &&\circ \ar[ur] &&\circ \ar[ur] &&\cdot\cdot\cdot
}
$$
Note that the indecomposable objects marked by $\bigstar$ in the diagram become zero objects in $\B/\s$. The twin cotorsion pair in this example is {\bf NOT} a Hovey twin cotorsion pair, since the objects marked by $\bigstar$ lie in $\s_R$, but do not lie in $\s_L$.
\end{exm}

\subsection{Some observations}
Observe that we replace the assumption ``$\s$ is closed under taking cones" by ``$(\U,\Y)$ is a hereditary cotorsion pair", all the results still hold. In fact, we have the following proposition.

\begin{prop}\label{P1}
If $(\U,\Y)$ is a hereditary cotorsion pair, then $\s$ is closed under taking cones. Moreover, $\s=\s_L$.
\end{prop}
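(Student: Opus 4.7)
The plan is to reuse part (3) of the proof of Proposition \ref{Here} essentially verbatim, observing that it only invokes hereditarity of $(\U,\Y)$ and not of $(\X,\V)$. Once closure under cones is in hand, the equality $\s=\s_L$ is a direct corollary using $\X,\Y\subseteq\s$ together with extension-closure of $\s$.

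More concretely, starting from an $\EE$-triangle $A\to B\to C\dashrightarrow$ with $A,B\in\s$, I would first pick an $\EE$-triangle $A\to Y_A\to X_A\dashrightarrow$ (with $Y_A\in\Y$, $X_A\in\X$) coming from the cotorsion pair $(\X,\Y)$ in $\s$, and form the pushout along $A\to B$ to get an object $D$ sitting in $\EE$-triangles $Y_A\to D\to C\dashrightarrow$ and $B\to D\to X_A\dashrightarrow$. Extension closure of $\s$, together with $B,X_A\in\s$, forces $D\in\s$, so I may then pick an $\EE$-triangle $D\to Y_D\to X_D\dashrightarrow$ with $Y_D\in\Y$, $X_D\in\X$ and take a second pushout along $Y_A\to D\to Y_D$ to produce the $3\times 3$ diagram
$$\xymatrix@C=0.8cm@R0.7cm{
Y_A \ar[r] \ar@{=}[d] &D \ar[r]^d \ar[d]^y &C \ar@{-->}[r] \ar[d]^c &\\
Y_A \ar[r] &Y_D \ar[r]^{y_0} \ar[d] &Y_0 \ar@{-->}[r] \ar[d] &\\
&X_D \ar@{=}[r] \ar@{-->}[d] &X_D \ar@{-->}[d]\\
&&
}$$
whose middle row is an $\EE$-triangle $Y_A\to Y_D\to Y_0\dashrightarrow$ with $Y_A,Y_D\in\Y$. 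At this point the hereditarity of $(\U,\Y)$ enters to guarantee $Y_0\in\Y$. Applying \cite[Proposition 1.20]{LN}, I can then package the middle column and $c$ into an $\EE$-triangle $D\xrightarrow{\svecv{-d}{y}}C\oplus Y_D\xrightarrow{\svech{c}{y_0'}}Y_0\dashrightarrow$ for a suitable $y_0'$. Because $\s$ is extension closed with $D\in\s$ and $Y_0\in\Y\subseteq\s$, this gives $C\oplus Y_D\in\s$; Lemma \ref{cap} ensures $\s$ is closed under direct summands, so $C\in\s$.

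For the equality $\s=\s_L$, the inclusion $\s\subseteq\s_L$ is already witnessed by the approximation $\EE$-triangles produced by the cotorsion pair $(\X,\Y)$ inside $\s$. Conversely, any $B\in\s_L$ fits into an $\EE$-triangle $Y\to X\to B\dashrightarrow$ with $Y\in\Y\subseteq\s$ and $X\in\X\subseteq\s$, and the cone closure just proved places $B$ in $\s$.

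The main obstacle is the step forcing $Y_0\in\Y$: without the hereditarity hypothesis on $(\U,\Y)$ the middle row of the $3\times 3$ diagram need not have a $\Y$-term on the right, and the ensuing extension-closure argument has nothing to grab onto. Everything else is standard pushout and $3\times 3$ bookkeeping already present in the proof of Proposition \ref{Here}.
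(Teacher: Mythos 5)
Your proof of cone-closure is correct and follows essentially the same route as the paper's (the identical two pushouts and the same appeal to hereditarity of $(\U,\Y)$ to place $Y_0\in\Y$, followed by \cite[Proposition 1.20]{LN} and extension/direct-summand closure of $\s$). For the inclusion $\s_L\subseteq\s$ you take a slightly more direct route: the paper re-runs a fresh pushout argument (modifying $Y^A\to X^A\to A\dashrightarrow$ into $X^A\to A\oplus W\to Y'\dashrightarrow$ with $Y'\in\Y$ via hereditarity), whereas you simply note that for $A\in\s_L$ the defining $\EE$-triangle $Y\to X\to A\dashrightarrow$ already has both $Y$ and $X$ in $\s$, so the cone-closure just established yields $A\in\s$ immediately; this is a clean minor improvement.
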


\begin{proof}
Let $S_1\to S_2\to S\dashrightarrow$ be an $\EE$-triangle with $S_1,S_2\in \s$. Since $S_1$ admits an $\EE$-triangle $S_1\to Y_1\to X_1\dashrightarrow$ with $Y_1\in \Y$ and $X_1\in \X$, we have the following commutative diagram
$$\xymatrix@C=0.8cm@R0.7cm{
S_1 \ar[r] \ar[d] &S_2 \ar[r]  \ar[d] &S \ar@{-->}[r] \ar@{=}[d] &\\
Y_1 \ar[r] \ar[d] &S_3 \ar[r] \ar[d] &S \ar@{-->}[r]&\\
X_1 \ar@{=}[r] \ar@{-->}[d] &X_1 \ar@{-->}[d]\\
&&
}
$$
with $S_3\in\s$. Since $S_3$ admits an $\EE$-triangle $S_3\to Y_3\to X_3\dashrightarrow$ with $Y_3\in \Y$ and $X_3\in \X$, we have the following commutative diagram
$$\xymatrix@C=0.8cm@R0.7cm{
Y_1 \ar[r] \ar@{=}[d] &S_3 \ar[r] \ar[d] &S \ar@{-->}[r] \ar[d]&\\
Y_1 \ar[r] &Y_3 \ar[d] \ar[r] &Y \ar@{-->}[r] \ar[d]&\\
&X_3 \ar@{=}[r] \ar@{-->}[d] &X_3 \ar@{-->}[d]\\
&&
}
$$
with $Y\in \Y$. Then we can get an $\EE$-triangle $S_3\to S\oplus Y_3\to Y\dashrightarrow$. Since $\s$ is closed under extensions and direct summands, we have $S\in \s$.

By definition, $\s\subseteq\s_L$. Let $A\in \s_L$. Then $A$ admits an $\EE$-triangle $Y^A\to X^A\to A\dashrightarrow$ with $Y^A\in \Y$ and $X^A\in \X$. Since $X^A$ admits an $\EE$-triangle $X^A\to W\to X\dashrightarrow$ with $W\in \W$ and $X\in \X$, we have the following commutative diagram
$$\xymatrix@C=0.8cm@R0.7cm{
Y^A \ar[r] \ar@{=}[d] &X^A \ar[r] \ar[d] &A \ar[d] \ar@{-->}[r]&\\
Y^A \ar[r] &W \ar[r] \ar[d] &Y'  \ar[d] \ar@{-->}[r]&\\
&X \ar@{=}[r] \ar@{-->}[d] &X \ar@{-->}[d]\\
&&
}$$
with $Y'\in \Y$. Then we can get an $\EE$-triangle $X^A\to A\oplus W\to Y'\dashrightarrow$, which implies that $A\in \s$. Hence $\s=\s_L$.
\end{proof}

At the end of this section, we drop the assumption that ``$\s$ is an extension closed subcategory in which $(\X,\Y)$ is a cotorsion pair". We give a sufficient condition when $(\X,\Y)$ is a cotorsion pair in an extension closed subcategory which is closed under taking cones.

We first show a useful lemma.

\begin{lem}\label{extension}
Let $\C$ and $\D$ be two subcategories of $\B$ which are closed under extensions. Denote by $\C*\D$ the following subcategory:
$$\{B\in \B \text{ }|\text{ } \exists \text{ }\EE\text{-triangle } C\to B\to D\dashrightarrow \text{ with }C\in \C\text{ and }D\in \D\}.$$
If $\EE(\C,\D)=0$, then $\C*\D$ is closed under extensions.
\end{lem}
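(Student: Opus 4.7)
The goal is to show that if $B_1 \to B \to B_2 \dashrightarrow$ is an $\EE$-triangle with $B_1, B_2 \in \C * \D$, then $B \in \C * \D$. Fix $\EE$-triangles $C_i \to B_i \to D_i \dashrightarrow$ with $C_i \in \C$ and $D_i \in \D$ for $i = 1, 2$. The plan is to produce, by two applications of (ET4) and two of (ET4)$^{\op}$, an $\EE$-triangle of the form $L \to B \to D' \dashrightarrow$ with $L \in \C$ and $D' \in \D$. The hypothesis $\EE(\C,\D)=0$ enters at exactly one point, where it forces an auxiliary $\EE$-triangle to split.

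First I would apply (ET4) to the composable inflations $C_1 \to B_1 \to B$. This yields an $\EE$-triangle $C_1 \to B \to E \dashrightarrow$ together with an $\EE$-triangle $D_1 \to E \to B_2 \dashrightarrow$, where $E$ is the ``cokernel'' of $C_1 \to B$. Next, I would apply (ET4)$^{\op}$ to the composable deflations $E \to B_2 \to D_2$. This produces an $\EE$-triangle $K \to E \to D_2 \dashrightarrow$ and an $\EE$-triangle $D_1 \to K \to C_2 \dashrightarrow$. The latter has class in $\EE(C_2,D_1) \subseteq \EE(\C,\D) = 0$, so it splits; in particular $C_2 \to K$ is a (split) inflation.

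Composing the inflations $C_2 \to K \to E$ and applying (ET4) once more, the cokernel of $C_2 \to E$, call it $D'$, fits into an $\EE$-triangle $D_1 \to D' \to D_2 \dashrightarrow$. Since $\D$ is closed under extensions by hypothesis, $D' \in \D$, giving an $\EE$-triangle $C_2 \to E \to D' \dashrightarrow$, so $E \in \C * \D$. Finally, applying (ET4)$^{\op}$ to the composable deflations $B \to E \to D'$ coming from $C_1 \to B \to E \dashrightarrow$ and $C_2 \to E \to D' \dashrightarrow$ yields an $\EE$-triangle $L \to B \to D' \dashrightarrow$ together with an $\EE$-triangle $C_1 \to L \to C_2 \dashrightarrow$; since $\C$ is closed under extensions, $L \in \C$, and we conclude $B \in \C * \D$.

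The main obstacle is simply the bookkeeping: four commutative $3 \times 3$ style diagrams must be assembled from (ET4) and (ET4)$^{\op}$, and one must track exactly which two objects sit in $\C$ or $\D$ at each stage. The conceptual core is a single line: the vanishing $\EE(\C,\D)=0$ turns the ``mixed'' extension $D_1 \to K \to C_2 \dashrightarrow$ into a direct sum, which lets us separate the $\C$-part from the $\D$-part and then reassemble them on the two sides of $B$ using that $\C$ and $\D$ are themselves extension-closed.
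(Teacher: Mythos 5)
Your proof is correct and takes essentially the same approach as the paper: a chase through four applications of (ET4)/(ET4)$^{\op}$, with $\EE(\C,\D)=0$ used exactly once to split the mixed extension and separate the $\C$-part from the $\D$-part before reassembling an extension of $C_1$ by $C_2$ on the left and of $D_1$ by $D_2$ on the right. The paper begins by pulling $C_2$ back along $K\to K_2$ to form an intermediate object, whereas you begin by pushing $C_1$ forward along $B_1\to B$ and then pull back from the other end; these are dual orderings of the same diagram manipulations and yield the same final decomposition.
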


\begin{proof}
For convenience, denote $\C*\D$ by $\K$. Let $K_1\to K\to K_2\dashrightarrow$ be an $\EE$-triangle with $K_1,K_2\in \K$. Since $K_2$ admits an $\EE$-triangle $C_2\to K_2\to D_2\dashrightarrow$ with $C_2\in \C$ and $D_2\in \D$, we get the following commutative diagram
$$\xymatrix@C=0.8cm@R0.7cm{
K_1\ar@{=}[r] \ar[d] &K_1 \ar[d]\\
K' \ar[r] \ar[d] &K \ar[r] \ar[d] &D_2 \ar@{=}[d] \ar@{-->}[r] &\\
C_2 \ar[r] \ar@{-->}[d] &K_2 \ar[r] \ar@{-->}[d] &D_2 \ar@{-->}[r] &.\\
&&
}
$$
Since $K_1$ admits an $\EE$-triangle $C_1\to K_1\to D_1\dashrightarrow$ with $C_1\in \C$, $D_1\in \D$  and $\EE(C_2,D_1)=0$, we get the following commutative diagrams:
$$\xymatrix@C=0.8cm@R0.7cm{
C_1 \ar@{=}[d] \ar[r] &K_1 \ar[r] \ar[d] &D_1 \ar[d] \ar@{-->}[r] &\\
C_1 \ar[r] &K' \ar[r] \ar[d] &D_1\oplus C_2 \ar[d] \ar@{-->}[r] &,\\
&C_2 \ar@{=}[r] \ar@{-->}[d] &C_2 \ar@{-->}[d]\\
&&\\
}\hspace{1.2cm} \xymatrix@C=0.8cm@R0.7cm{
C_1 \ar@{=}[r] \ar[d] &C_1 \ar[d]\\
C_3 \ar[r] \ar[d] &K' \ar[r] \ar[d] &D_1 \ar@{=}[d] \ar@{-->}[r] &\\
C_2 \ar[r] \ar@{-->}[d] &D_1\oplus C_2 \ar[r] \ar@{-->}[d] &D_1 \ar@{-->}[r] &\\
&&
}
$$
with $C_3\in \C$. Now we get the following commutative diagram:
$$\xymatrix@C=0.8cm@R0.7cm{
C_3 \ar[r] \ar@{=}[d] &K' \ar[r] \ar[d] &D_1 \ar[d] \ar@{-->}[r] &\\
C_3 \ar[r] &K \ar[r] \ar[d] &D_3 \ar[d] \ar@{-->}[r] &\\
&D_2 \ar@{=}[r] \ar@{-->}[d] &D_2 \ar@{-->}[d]\\
&&
}
$$
with $D_3\in \D$. Hence $K\in \K$.
\end{proof}

\begin{prop}\label{triHov}
Let $\B$ be an triangulated category with shift functor {\rm [1]}. Assume that the twin cotorsion pair $((\X,\V),(\U,\Y))$ satisfies the following conditions:
\begin{itemize}
\item[\rm (1)] $\X\cap\V=\U\cap\Y$;
\item[\rm (2)] $(\U,\Y)$ is a hereditary cotorsion pair.
\end{itemize}
Then $\M=\s_L$ is the only extension closed subcategory in which $(\X,\Y)$ is a cotorsion pair. Moreover, $\M$ is closed under taking cones.
\end{prop}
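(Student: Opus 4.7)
The plan is to prove that $\s_L$ itself is an extension closed subcategory carrying $(\X,\Y)$ as a cotorsion pair, establish its uniqueness among such subcategories, and then deduce the cone-closure statement from Proposition~\ref{P1}.

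First I would show that $\s_L$ is extension closed. Rotating the defining triangle $Y \to X \to B \to Y[1]$ identifies $\s_L$ with $\X * \Y[1]$, so by Lemma~\ref{extension} it suffices to verify that $\EE(\X,\Y[1]) = \Hom(\X,\Y[2]) = 0$ (and that $\X$ and $\Y[1]$ are themselves extension closed, which is automatic since $[1]$ is a triangle equivalence). Applying the first hereditary axiom for $(\U,\Y)$ to the triangle $C[-1] \to 0 \to C \dashrightarrow$ with $C \in \U$ forces $\U[-1] \subseteq \U$, equivalently $\Y[1] \subseteq \Y$; hence $\Y[2] \subseteq \Y[1]$, and for $X \in \X \subseteq \U$, $Y \in \Y$ we can write $Y[2] = Y'[1]$ with $Y' = Y[1] \in \Y$, whence $\Hom(X,Y[2]) \subseteq \Hom(\U,\Y[1]) = 0$. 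This $\EE^2$-vanishing is the main point where the triangulated (as opposed to merely extriangulated) structure enters and is the key obstacle in the argument.

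Next I would check the cotorsion pair axioms for $(\X,\Y)$ in $\s_L$. The inclusions $\X,\Y \subseteq \s_L$ and $\EE(\X,\Y)=0$ are inherited using condition~(1) and the Remark following Lemma~\ref{iso}. For $B \in \s_L$ the "left" resolution $Y \to X \to B \dashrightarrow$ is built into the definition; for the "right" one, choose an $\EE$-triangle $X \to W \to X' \dashrightarrow$ with $W \in \W$, $X' \in \X$ (from the same Remark) and take the octahedron on the composition $Y \to X \to W$. Its middle row $Y \to W \to Y_0 \dashrightarrow$ has both $Y,W \in \Y$, so the second hereditary axiom forces $Y_0 \in \Y$, and the rightmost column $B \to Y_0 \to X' \dashrightarrow$ is precisely the required $\EE$-triangle.

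For uniqueness, let $\M$ be any extension closed subcategory carrying $(\X,\Y)$ as a cotorsion pair. The inclusion $\M \subseteq \s_L$ is immediate from the definition of $\s_L$. For $\s_L \subseteq \M$, I would copy the octahedron/direct-summand argument from the second half of the proof of Proposition~\ref{Here}: given $S \in \s_L$ with $Y^S \to X^S \to S \dashrightarrow$ and $X^S \to W_0 \to X_0 \dashrightarrow$, the octahedron and \cite[Proposition~1.20]{LN} yield an $\EE$-triangle $X^S \to S \oplus W_0 \to Y_1 \dashrightarrow$ with $Y_1 \in \Y$; since $X^S, W_0, Y_1 \in \X \cup \Y \subseteq \M$ and $\M$ is extension closed, $S \oplus W_0 \in \M$, and Lemma~\ref{cap} (direct-summand closure of $\M$) then gives $S \in \M$. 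The cone-closure of $\M = \s_L$ follows at once from Proposition~\ref{P1} applied to $\s := \s_L$. Once the $\EE^2$-vanishing of the first step is established, the remaining steps are essentially a reassembly of tools already developed for the earlier propositions.
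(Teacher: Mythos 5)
Your proof is correct, and its backbone is the same as the paper's: identify $\s_L$ with $\X*\Y[1]$, use $\Hom_\B(\X,\Y[2])=0$ together with Lemma~\ref{extension} to get extension-closure, and invoke Proposition~\ref{P1} for the cone-closure statement. There are two points where you take a slightly longer route than the paper. First, rather than directly verifying that $(\X,\Y)$ is a cotorsion pair in $\s_L$ by building the right-hand resolution with an octahedron, the paper works with $\M:=\s_L\cap\s_R$ (both closed under extensions by Lemma~\ref{extension}, using that $\s_R=\X[-1]*\Y$), observes that the defining resolutions make $(\X,\Y)$ a cotorsion pair in $\M$, and then applies Proposition~\ref{P1} to conclude $\M=\s_L$; your octahedron construction is a correct and more hands-on substitute that simultaneously yields $\s_L\subseteq\s_R$. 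Second, for uniqueness the paper has a noticeably slicker argument than the one you borrow from Proposition~\ref{Here}: given $M\in\M$ with triangle $Y_M\to X_M\to M\to Y_M[1]$, simply rotate to $X_M\to M\to Y_M[1]\to X_M[1]$; since $X_M\in\X\subseteq\M_1$ and $Y_M[1]\in\Y[1]\subseteq\Y\subseteq\M_1$, extension-closure of $\M_1$ gives $M\in\M_1$ at once, with no need for the octahedron or direct-summand closure. Finally, the paper deduces $\Hom_\B(\X,\Y[2])=0$ directly from $\EE^2(\U,\Y)=0$ (Proposition~\ref{L2}, available since a triangulated category trivially has enough projectives and injectives); your elementary derivation via $\Y[1]\subseteq\Y$ is equally valid but avoids invoking that characterization.
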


\begin{proof}
Since $\B$ is triangulated, we have $\s_R=\X[-1]*\Y$ and $\s_L=\X*\Y[1]$. Since $(\U,\Y)$ is hereditary, we have $\Hom_{\B}(\X,\Y[2])=0$. By Lemma \ref{extension}, $\s_L$ and $\s_R$ are closed under extensions. Hence $\M=\s_L\cap \s_R$ is an extriangulated subcategory of $\B$ in which $(\X,\Y)$ is a cotorsion pair. By Proposition \ref{P1}, we have $\M=\s_L\subseteq \s_R$. Since $\Y[1]\subseteq \Y\subseteq \M$, we have $\M[1]\subseteq \s_R[1]=\s_L=\M$, which implies that $\M$ is closed under taking cones.

If there is another extension closed subcategory $\M_1$ in which $(\X,\Y)$ is a cotorsion pair, then we immediately get $\M_1\subseteq \M$. Any object $M\in \M$ admits an triangle $Y_M\to X_M\to M\to Y_M[1]$ with $Y_M\in \Y$ and $X_M\in \X$, since $Y_M[1]\in \Y$ and $\X,\Y\subseteq\M_1$, we have $M\in \M_1$. Hence $\M_1=\M$.
\end{proof}

\section{Extriangle equivalences}

\begin{defn}\label{exeq}
Let $(\B_1,\EE_1,\mathfrak{s}_1)$ and $(\B_2,\EE_2,\mathfrak{s}_2)$ be two extriangulated categories and $\sigma: \B_1\to \B_2$ be an equivalent functor. $\sigma$ is called an \emph{extriangle equivalence} if the following conditions are satisfied:
\begin{itemize}
\item[(1)] $\sigma$ preserves $\EE$-triangles, which means for any $\EE_1$-triangle $A_1\xrightarrow{x_1} B_1\xrightarrow{y_1} C_1\dashrightarrow$, there exists an $\EE_2$-extension $\delta\in \EE_2(\sigma(C_1),\sigma(A_1))$ such that $\sigma(A_1)\xrightarrow{\sigma(x_1)} \sigma(B_1)\xrightarrow{\sigma(y_1)} \sigma(C_1)\overset{\delta}{\dashrightarrow}$ is an $\EE_2$-triangle.
\item[(2)] For any $\EE_2$-triangle $A_2\xrightarrow{x_2} B_2\xrightarrow{y_2} C_2\dashrightarrow$ in $\B_2$, there exists an $\EE_1$-triangle $A_1\xrightarrow{x_1} B_1\xrightarrow{y_1} C_1\dashrightarrow$ in $\B_1$ admitting an isomorphism of $\EE_2$-triangles
$$\xymatrix@C=0.8cm@R0.7cm{
A_2 \ar[r]^{x_2} \ar[d]^{\simeq} &B_2 \ar[r]^{y_2} \ar[d]^{\simeq} &C_2 \ar@{-->}[r] \ar[d]^{\simeq} &\\
\sigma(A_1) \ar[r]_{\sigma(x_1)} &\sigma(B_1) \ar[r]_{\sigma(y_1)} &\sigma(C_1) \ar@{-->}[r] &.
}$$
\end{itemize}
\end{defn}

\begin{rem}
If $\B_1$ and $\B_2$ are exact categories, then any extriangle equivalence is an exact equivalence (see \cite[Definition 5.1]{B}). If $\B_i$ is a triangulated category with shift functor $[1]_{\B_i}$ ($i=1,2$), then an extriangle equivalence $\sigma:\B_1\to \B_2$ is a ``weak" triangle equivalence in the sense that we do not know if there is a natural isomorphism between $\sigma\circ [1]_{\B_1}$ and $[1]_{\B_2}\circ \sigma$.
\end{rem}

\begin{rem}
In \cite[Definition 2.32]{B-TS}, a functor called $n$-exangulated functor was defined between $n$-exangulated categories. When $n=1$, such functor is called an extriangulated functor (between extriangulated categories, see also \cite[Definition 2.11]{NOS}). According to \cite[Theorem 2.33]{B-TS}, our definition is slightly weak than \cite[Definition 2.32]{B-TS} when $n=1$, since we do not assume the existence of a natural transformation from $\EE_1(-,-)$ to $\EE_2(\sigma^{\op}-,\sigma-)$.
\end{rem}

In this section, we assume that $\B$ has enough enough projectives and enough injectives, and $\s$ is a thick subcategory of $\B$. We show that $F\colon \Z/[\W]\to \B/\s$ is an extriangle equivalence.

Since $((\X,\V),(\U,\Y))$ is a Hovey twin cotorsion pair by Lemma \ref{L3},  by the discussion in Subsection 3.4, we can use the results in \cite[Section 6]{NP}.

Let $f:A\to B$ be any morphism in $\B$. We have the following commutative diagram
$$\xymatrix@C=0.8cm@R0.7cm{
A \ar[r]^{y_A} \ar[d]_f &Y_A \ar[r] \ar[d] &U_A \ar[d]^{u_f} \ar@{-->}[r] &\\
B \ar[r]_{y_B} &Y_B \ar[r] &U_B \ar@{-->}[r] &
}
$$
 where $U_A,U_B\in \U$, $y_A,y_B$ are left minimal $\Y$-approximations. Since $\B/\s$ and $\B[\mathbb{W}^{-1}]$ are equivalent to each other, by the results in \cite[Section 6]{NP}, we can define an auto-equivalence $[1]$ on $\B/\s$ such that $A[1]=U_A$ and $\underline f[1]=\underline u_f$. Moreover, the following commutative diagram
$$\xymatrix@C=0.8cm@R0.7cm{
A_1 \ar[r]^{f_1} \ar[d]_{a_1} &B_1 \ar[r]^{g_1} \ar[d]^{b_1} &C_1 \ar[d]^{c_1} \ar@{-->}[r]&\\
A_2 \ar[r]_{f_2} &B_2 \ar[r]_{g_2} &C_2 \ar@{-->}[r]&\\
}
$$
induces a commutative diagram in $\B/\s$:
$$\xymatrix@C=0.8cm@R0.7cm{
A_1 \ar[r]^{\underline f_1} \ar[d]_{\underline a_1} &B_1 \ar[r]^{\underline g_1} \ar[d]^{\underline b_1} &C_1 \ar[r]^{\alpha_1\;\;\;} \ar[d]^{\underline c_1} &A_1[1] \ar[d]^{\underline a_1[1]}\\
A_2 \ar[r]_{\underline f_2} &B_2 \ar[r]_{\underline g_2} &C_2 \ar[r]_{\alpha_2\;\;\;} & A_2[1].
}
$$
The rows are called \emph{the standard triangles} in $\B/\s$. The \emph{distinguished triangles} are the sequences which are isomorphic to the standard triangles. By \cite[Theorem 6.20]{NP}, $\B/\s$ is a triangulated category with distinguished triangles and the shift functor [1]. Note that any triangulated category can be viewed as an extriangulated category (see \cite[Proposition 3.22]{NP} for details).

\begin{lem}\label{U}
Let $\xymatrix@C=0.6cm@R0.6cm{A\ar[r]^{x} &B\ar[r]^{y} &C\ar@{-->}[r] &}$ be any $\EE$-triangle in $\B$. Then there exists an isomorphism of triangles
$$\xymatrix@C=0.8cm@R0.7cm{
A'\ar[r]^{\underline x'} \ar[d]_{\simeq} &B\ar[r]^{\underline y'} \ar@{=}[d] &C'\ar[d]^{\simeq} \ar[r] &A'[1] \ar[d]^{\simeq}\\
A \ar[r]_{\underline x} &B \ar[r]_{\underline y} &C \ar[r] &A[1]
}
$$
in $\B/\s$ such that the first row admits an $\EE$-triangle $\xymatrix@C=0.6cm@R0.6cm{A' \ar[r]^-{\svecv{x'}{*}} &B\oplus I \ar[r]^-{\svech{y'}{*}} &C'\ar@{-->}[r] &}$ with $A'\in \U$ and $I\in \mathcal I$.
\end{lem}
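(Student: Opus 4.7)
The plan is to set $A' := U^A$ where $U^A\in \U$ comes from the cotorsion pair $(\U,\Y)$, build an $\EE$-triangle $U^A\to B\oplus I\to C'\dashrightarrow$ with $I$ injective, and then use that $I\in \s$ to collapse the middle term to $B$ when passing to $\B/\s$. First, using the cotorsion pair $(\U,\Y)$, I would choose an $\EE$-triangle $Y^A\to U^A\xrightarrow{p} A\dashrightarrow$ with $Y^A\in \Y$ and $U^A\in \U$; since $\B$ has enough injectives, I would also pick an inflation $\iota\colon U^A\to I$ with $I\in \I$.

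The crux of the construction is the morphism $\svecv{xp}{\iota}\colon U^A\to B\oplus I$. The one real technical point is to argue that this is an inflation: post-composing with the projection $\svech{0}{1_I}\colon B\oplus I\to I$ gives $\iota$, which is an inflation, so condition (WIC) forces $\svecv{xp}{\iota}$ itself to be an inflation. Letting $C'$ denote its cone, we obtain an $\EE$-triangle $U^A\xrightarrow{\svecv{xp}{\iota}} B\oplus I\xrightarrow{\svech{y'}{y''}} C'\dashrightarrow$ of exactly the form required in the statement, with $A' = U^A\in \U$ and $I\in \I$.

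Next I would apply the extriangulated axiom (ET3) to the evidently commutative square with vertical maps $p\colon U^A\to A$ and $\svech{1_B}{0}\colon B\oplus I\to B$ and horizontal maps $\svecv{xp}{\iota}$ and $x$; this yields $c\colon C'\to C$ completing the data to a morphism of $\EE$-triangles. Passing to the triangulated category $\B/\s$ (whose structure is transferred from $\B[\mathbb{W}^{-1}]\simeq \B/\s$ as in Section 3.4), this descends to a morphism of distinguished triangles. Because $I\in \I\subseteq \Y\subseteq \s$, Lemma \ref{BML}(1) shows that $\underline{\svech{1_B}{0}}$ is invertible in $\B/\s$, so I would use it to replace the middle term $B\oplus I$ of the first triangle by $B$, obtaining the first row in the form stated (with middle vertical arrow $1_B$).

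It remains to check invertibility of the three nontrivial vertical morphisms. The map $\underline p$ is invertible in $\B/\s$ by Proposition \ref{Sigma} applied to $Y^A\to U^A\xrightarrow{p} A\dashrightarrow$ (which is applicable since the thickness of $\s$ implies $\s$ is closed under taking cones). After the replacement above, the middle vertical is $1_B$, automatically invertible. The triangulated five-lemma in $\B/\s$ then forces $\underline c$ to be invertible as well, completing the isomorphism of triangles. The only subtlety is the (WIC) argument for the inflation; everything else is bookkeeping between $\EE$-triangles in $\B$ and distinguished triangles in $\B/\s$.
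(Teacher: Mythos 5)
Your proposal is correct and arrives at the same object $A' = U^A$ and the same $\EE$-triangle $A'\to B\oplus I\to C'$, but it reaches the invertibility statements by a genuinely shorter route. The paper builds the inflation $\svecv{x'}{*}\colon A'\to B\oplus I$ indirectly: it first resolves the cocone $Y$ of $A'\to A$ into an injective, $Y\to I\to S'$, then runs two (ET4)-type diagrams to obtain both $A'\to A\oplus I\to S'\dashrightarrow$ (with $S'\in\s$) and, as a by-product, an $\EE$-triangle $S'\to C'\xrightarrow{c} C\dashrightarrow$. The first of these triangles yields $\underline{a}$ invertible via Lemma~\ref{BML}; the second is then fed through another injective resolution $S'\to I'\to S''$ to get $C'\to C\oplus I'\to S''\dashrightarrow$, so that $\underline{c}$ is invertible by Lemma~\ref{BML} again. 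You instead pick an inflation $\iota\colon U^A\to I$ directly using enough injectives, observe via (WIC) that $\svecv{xp}{\iota}$ is an inflation, get $c$ from (ET3), invoke Proposition~\ref{Sigma} for invertibility of $\underline p$, and use the triangulated five lemma in $\B/\s$ for $\underline c$. This trades the paper's explicit cone constructions for the already-established triangulated structure on $\B/\s$; it is a valid shortcut, and your application of (WIC) is a somewhat cleaner way to produce the required inflation than the paper's cascade of commutative diagrams. Both proofs, of course, construct different $I$'s (yours contains $U^A$, the paper's is an injective hull of $Y$), but the lemma allows any $I\in\I$, so this is immaterial.
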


\begin{proof}
Since $(\U,\Y)$ is a cotorsion pair, $A$ admits an $\EE$-triangle $\xymatrix@C=0.6cm@R0.6cm{Y\ar[r] &A'\ar[r]^a &A\ar@{-->}[r] &}$ with $A'\in \U$ and $Y\in \Y$. $Y$ admits an $\EE$-triangle $\xymatrix@C=0.6cm@R0.6cm{Y \ar[r] &I \ar[r] &S' \ar@{-->}[r] &}$ with $I\in \mathcal I$ and $S'\in \s$. Then we have the following commutative diagrams
$$\xymatrix@C=0.8cm@R0.7cm{
Y \ar[r] \ar[d] &A'\ar[r]^a \ar[d]^-{\svecv{a}{*}} &A \ar@{=}[d] \ar@{-->}[r] &\\
I \ar[r] \ar[d] &A\oplus I \ar[d] \ar[r]_-{\svech{1}{0}} &A \ar@{-->}[r] &,\\
S' \ar@{-->}[d] \ar@{=}[r] &S' \ar@{-->}[d]\\
&&
}\quad \quad \xymatrix@C=0.8cm@R0.7cm{
A' \ar[r]^-{\svecv{a}{*}} \ar@{=}[d] &A\oplus I \ar[r] \ar[d]_-{\left(\begin{smallmatrix}
x& 0\\
0&1
\end{smallmatrix}\right)} &S' \ar[d] \ar@{-->}[r] &\\
A' \ar[r]_-{\svecv{x'}{*}} &B\oplus I \ar[r]^-{\svech{y'}{*}} \ar[d]^-{\svech{y}{0}} &C' \ar[d]^c \ar@{-->}[r] &.\\
&C \ar@{=}[r] \ar@{-->}[d] &C \ar@{-->}[d]\\
&&
}
$$
By Lemma \ref{BML}, $\underline a:A'\to A$ is invertible. $S'$ admits an $\EE$-triangle $S'\to I'\to S''\dashrightarrow$ with $I'\in \mathcal I$ and $S''\in \s$, then we have a commutative diagram
$$\xymatrix@C=0.8cm@R0.7cm{
S' \ar[r] \ar[d] &C'\ar[r]^c \ar[d]^-{\svecv{c}{*}} &C \ar@{=}[d] \ar@{-->}[r] &\\
I' \ar[r] \ar[d] &C\oplus I' \ar[d] \ar[r]_-{\svech{1}{0}} &C \ar@{-->}[r] &.\\
S'' \ar@{-->}[d] \ar@{=}[r] &S'' \ar@{-->}[d]\\
&&
}
$$
By Lemma \ref{BML}, $\underline c:C'\to C$ is invertible. Hence we have the following commutative diagram
$$\xymatrix@C=0.8cm@R0.7cm{
A' \ar[r]^-{\svecv{x'}{*}} \ar[d]_-{\svecv{a}{*}}  &B\oplus I \ar[r]^-{\svech{y'}{*}} \ar@{=}[d] &C' \ar[d]^c \ar@{-->}[r] &\\
A\oplus I \ar[r]^-{\left(\begin{smallmatrix}
x& 0\\
0&1
\end{smallmatrix}\right)} &B\oplus I \ar[r]^-{\svech{y}{0}} &C \ar@{-->}[r] &
}
$$
which induces an isomorphism of triangles
$$\xymatrix@C=0.8cm@R0.7cm{
A'\ar[r]^{\underline x'} \ar[d]^{\simeq}_{\underline a} &B\ar[r]^{\underline y'} \ar@{=}[d] &C'\ar[d]^{\simeq}_{\underline c} \ar[r] &A'[1] \ar[d]^{\simeq}_{\underline a[1]}\\
A \ar[r]_{\underline x} &B \ar[r]_{\underline y} &C \ar[r] &A[1].
}
$$
\end{proof}

Dually, we have the following lemma:

\begin{lem}\label{V}
Let $\xymatrix@C=0.6cm@R0.6cm{A\ar[r]^{x} &B\ar[r]^{y} &C\ar@{-->}[r] &}$ be any $\EE$-triangle in $\B$. Then there exists an isomorphism of triangles
$$\xymatrix@C=0.8cm@R0.7cm{
A\ar[r]^{\underline x} \ar[d]^{\simeq} &B\ar[r]^{\underline y} \ar@{=}[d] &C\ar[d]^{\simeq} \ar[r] &A[1] \ar[d]^{\simeq}\\
A'' \ar[r]_{\underline x''} &B \ar[r]_{\underline y''} &C'' \ar[r] &A''[1]
}
$$
in $\B/\s$ such that the second row admits an $\EE$-triangle $\xymatrix@C=0.6cm@R0.6cm{A'' \ar[r]^-{\svecv{x''}{*}} &B\oplus P \ar[r]^-{\svech{y''}{*}} &C''\ar@{-->}[r] &}$ with $C''\in \V$ and $P\in \mathcal P$.
\end{lem}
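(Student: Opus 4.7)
The proof is obtained by dualizing the argument of Lemma \ref{U}: the cotorsion pair $(\X,\V)$ plays the role of $(\U,\Y)$, the projectives $\mathcal P$ replace the injectives $\mathcal I$, and the inflation $C \to C''$ replaces the deflation $A' \to A$.

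Concretely, I would first apply $(\X,\V)$ to $C$ to obtain an $\EE$-triangle $C \xrightarrow{c} C'' \to X \dashrightarrow$ with $C'' \in \V$ and $X \in \X$. Since $\B$ has enough projectives, choose an $\EE$-triangle $S \to P \to X \dashrightarrow$ with $P \in \mathcal P$. By Lemma \ref{L1}(d), $\mathcal P \subseteq \X \subseteq \s$, so $P, X \in \s$, and thickness of $\s$ forces $S \in \s$. Combining these with the split $\EE$-triangle $C \to C \oplus P \to P \dashrightarrow$ (split because $\EE(P,C)=0$) in a $3\times 3$ diagram---the dual of the first diagram in the proof of Lemma \ref{U}---produces an $\EE$-triangle $S \to C \oplus P \xrightarrow{\svech{c}{*}} C'' \dashrightarrow$. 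Next, taking the direct sum of the given $\EE$-triangle $A \xrightarrow{x} B \xrightarrow{y} C \dashrightarrow$ with the split sequence $0 \to P = P$ yields $A \xrightarrow{\svecv{x}{0}} B \oplus P \to C \oplus P \dashrightarrow$. Applying the dual of the octahedral axiom to the composable deflations $B \oplus P \to C \oplus P \xrightarrow{\svech{c}{*}} C''$---the dual of the second diagram in the proof of Lemma \ref{U}---then produces an $\EE$-triangle $A'' \xrightarrow{\svecv{x''}{*}} B \oplus P \xrightarrow{\svech{y''}{*}} C'' \dashrightarrow$ together with an $\EE$-triangle $A \to A'' \to S \dashrightarrow$.

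Finally, the inflation $A \to A''$ has cone $S \in \s$, so it lies in $\R$ and becomes invertible in $\B/\s$; similarly, $c\colon C \to C''$ has cone $X \in \X \subseteq \s$ and is invertible. Since $P \in \s$, Lemma \ref{BML}(1) gives that $\svech{1}{0}\colon B \oplus P \to B$ is invertible in $\B/\s$, so the distinguished triangle attached to the $\EE$-triangle $A'' \to B \oplus P \to C''$ identifies with $A'' \to B \to C''$. The three invertible morphisms $A \xrightarrow{\simeq} A''$, $1_B$, and $C \xrightarrow{\simeq} C''$ then yield the required isomorphism of distinguished triangles, and commutativity of the squares is built into the construction. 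The main technical burden is executing the dualization accurately so that the matrix forms $\svecv{x''}{*}$ and $\svech{y''}{*}$ are correct and the resulting diagrams genuinely commute; once this is done, no new obstacle appears beyond those already handled in the proof of Lemma \ref{U}.
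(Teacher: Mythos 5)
Your proposal is correct and is precisely the dualization the paper intends (the paper only writes ``Dually, we have the following lemma'' and omits the argument). You correctly identify all the dual ingredients: the cotorsion pair $(\X,\V)$ in place of $(\U,\Y)$, the projective cover $S\to P\to X\dashrightarrow$ in place of the injective hull $Y\to I\to S'\dashrightarrow$, the observation $\mathcal P\subseteq\X\subseteq\s$ (mirroring $\mathcal I\subseteq\Y\subseteq\s$) to keep the cocone $S$ inside the thick subcategory $\s$, the first $3\times 3$ diagram producing $S\to C\oplus P\xrightarrow{\svech{c}{*}}C''\dashrightarrow$ so that $\underline c$ is invertible, and the dual octahedral applied to the composable deflations $B\oplus P\to C\oplus P\to C''$ producing both $A''\xrightarrow{\svecv{x''}{*}}B\oplus P\xrightarrow{\svech{y''}{*}}C''\dashrightarrow$ and $A\to A''\to S\dashrightarrow$ so that $\underline{A\to A''}$ is invertible.
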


Then following proposition shows that $F$ is an extriangle equivalence.

\begin{prop}\label{induce}
Let $\xymatrix@C=0.6cm@R0.6cm{A\ar[r]^{x} &B\ar[r]^{y} &C \ar@{-->}[r] &}$ be any $\EE$-triangle in $\B$. There is an isomorphism between $\EE$-triangles
$$\xymatrix@C=0.8cm@R0.7cm{
A\ar[r]^{\underline x} \ar[d]^{\simeq} &B\ar[r]^{\underline y} \ar[d]^{\simeq} &C\ar[d]^{\simeq}\\
Z^A \ar[r]_{\underline z^x} &Z^B \ar[r]_{\underline z^y} &Z^C
}
$$
in $\B/\s$, where $\xymatrix@C=0.6cm@R0.6cm{Z^A\ar[r]^{z^x} &Z^B\ar[r]^{z^y} &Z^C\ar@{-->}[r] &}$ is an $\EE$-triangle in $\Z$.
\end{prop}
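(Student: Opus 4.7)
The plan is to transform the given $\EE$-triangle $A \xrightarrow{x} B \xrightarrow{y} C \dashrightarrow$ into one with all three terms in $\Z$, via four successive modifications, each of which introduces only $\s$-objects and hence preserves the isomorphism class in $\B/\s$.

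First, I would apply Lemma~\ref{U} to obtain an isomorphic triangle (in $\B/\s$) realized by an $\EE$-triangle $A' \to B \oplus I \to C'$ with $A' \in \U$ and $I \in \mathcal{I}$; since $I$ arises as the target of an injective envelope of some $Y \in \Y \subseteq \s$ with cokernel in $\s$, thickness of $\s$ forces $I \in \s$. Second, I would apply Lemma~\ref{V} to this $\EE$-triangle, obtaining an $\EE$-triangle $A'' \to B \oplus I \oplus P \to C''$ with $C'' \in \V$ and $P \in \mathcal{P} \cap \s$ (dually); tracking the construction in Lemma~\ref{V} and using extension-closure of $\U$ together with $\mathcal{P} \subseteq \U$ (Lemma~\ref{L1}), one verifies $A'' \in \U$.

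Third, I would use the cotorsion pair $(\X,\V)$ on $A''$ to produce an $\EE$-triangle $A'' \to V_{A''} \to X_{A''} \dashrightarrow$ with $V_{A''} \in \V$ and $X_{A''} \in \X \subseteq \s$; extension-closure of $\U$ then yields $V_{A''} \in \U \cap \V = \Z$, and since $X_{A''} \in \s$, we have $A'' \cong V_{A''}$ in $\B/\s$. Dually, applying $(\U,\Y)$ on $C''$, I would produce $Y^{C''} \to U^{C''} \to C'' \dashrightarrow$ with $U^{C''} \in \Z$ and $U^{C''} \cong C''$ in $\B/\s$. Finally, using axiom~(ET4), I would push out the $\EE$-triangle $A'' \to B \oplus I \oplus P \to C''$ along the inflation $A'' \to V_{A''}$ to obtain an $\EE$-triangle $V_{A''} \to M_1 \to C''$, and then pull back along the deflation $U^{C''} \to C''$ to obtain the desired $\EE$-triangle $V_{A''} \to Z^B \to U^{C''}$ in $\B$. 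Since both endpoints lie in $\Z$ and $\U, \V$ are closed under extensions (Lemma~\ref{L1}), $Z^B \in \Z$ as well.

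The main obstacle will be the diagrammatic bookkeeping at the last step: one must verify, using axiom~(ET4) and its dual, that the pushout and pullback combine compatibly, and that the accumulated chain of isomorphisms in $\B/\s$—each one arising from an $\s$-object among $I, P, X_{A''}, Y^{C''}$—composes to the required isomorphism of $\EE$-triangles between $V_{A''} \to Z^B \to U^{C''}$ and the original $A \to B \to C$, with all three vertical comparison maps being isomorphisms in $\B/\s$.
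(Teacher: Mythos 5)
Your proposal reproduces the paper's proof structure very closely: Lemma~\ref{U} and Lemma~\ref{V} to normalize the endpoints, then the $(\X,\V)$-envelope of the $\U$-end and the $(\U,\Y)$-cover of the $\V$-end, followed by an (ET4)-pushout and a pullback. Steps~3--5 are exactly the pushout/pullback the paper performs.

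However, the justification you give in Step~2 for $A''\in\U$ does not hold up as stated. Tracking the dual of the construction in Lemma~\ref{U}, the object $A''$ produced by applying Lemma~\ref{V} to $A'\to B\oplus I\to C'$ fits into an $\EE$-triangle $A'\to A''\to S''\dashrightarrow$, where $S''$ is the cocone of a projective cover $P\to X_{C'}$ of the object $X_{C'}\in\X$ arising from the $(\X,\V)$-envelope of $C'$. Thus $S''$ lies in $\s$ (since $\s$ is thick), but there is no reason for $S''$ to lie in $\U$: one computes $\EE(S'',V)\cong\EE^2(X_{C'},V)$ for $V\in\V$, which vanishes only if $(\X,\V)$ is hereditary, and that is not assumed at this stage. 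So ``extension-closure of $\U$ together with $\mathcal P\subseteq\U$'' does not yield $A''\in\U$. To be fair, this is precisely the point the paper itself leaves implicit — the proof opens with ``By Lemma~\ref{U} and Lemma~\ref{V}, we can assume that $A\in\U$ and $C\in\V$'' without further justification, and a careful reading shows that applying the two lemmas in either order leaves it unclear whether the property gained from the first is preserved by the second. Your proposal faithfully reconstructs the paper's route, but the specific reasoning you supply to close this gap is incorrect; the gap itself is inherited from the source and would require a genuinely new argument (or a hereditary hypothesis) to repair.
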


\begin{proof}
By Lemma \ref{U} and Lemma \ref{V}, we can assume that $A\in \U$ and $C\in \V$. Since $A$ admits an $\EE$-triangle $\xymatrix@C=0.6cm@R0.6cm{A\ar[r] &Z^A \ar[r] &X^A \ar@{-->}[r] &}$ with $Z^A\in \Z$ and $X^A\in \X$, we have the following commutative diagram
$$\xymatrix@C=0.8cm@R0.7cm{
A \ar[r]^x \ar[d] &B \ar[r]^y \ar[d] &C \ar@{=}[d] \ar@{-->}[r]& \\
Z^A \ar[r] \ar[d] &B' \ar[d] \ar[r] &C \ar@{-->}[r] &. \\
X^A  \ar@{=}[r] \ar@{-->}[d] &X^A \ar@{-->}[d]\\
&&
}
$$
Since $C$ admits an $\EE$-triangle $\xymatrix@C=0.6cm@R0.6cm{Y^C\ar[r] &Z^C \ar[r] &C\ar@{-->}[r] &}$ with $Z^C\in \Z$ and $Y^C\in \Y$, we have the following commutative diagram
$$\xymatrix@C=0.8cm@R0.7cm{
&Y^C \ar[d] \ar@{=}[r] &Y^C \ar[d]\\
Z^A \ar[r]^{z^x} \ar@{=}[d] &Z^B \ar[r]^{z^y} \ar[d] &Z^C \ar[d] \ar@{-->}[r] &\\
Z^A \ar[r]  &B' \ar@{-->}[d] \ar[r] &C \ar@{-->}[d] \ar@{-->}[r] &\\
&&
}
$$
with $Z^B\in \Z$. Then we get an isomorphism of triangles
$$\xymatrix@C=0.8cm@R0.7cm{
A\ar[r]^{\underline x} \ar[d]^{\simeq} &B\ar[r]^{\underline y} \ar[d]^{\simeq} &C\ar[d]^{\simeq} \ar[r] &A[1] \ar[d]^{\simeq}\\
Z^A \ar[r]_{\underline z^x} &Z^B \ar[r]_{\underline z^y} &Z^C \ar[r] &Z^A[1].
}
$$
Hence there exists an isomorphism of $\EE$-triangles in $\B/\s$:
$$\xymatrix@C=0.8cm@R0.7cm{
A\ar[r]^{\underline x} \ar[d]^{\simeq} &B\ar[r]^{\underline y} \ar[d]^{\simeq} &C\ar[d]^{\simeq}\\
Z^A \ar[r]_{\underline z^x} &Z^B \ar[r]_{\underline z^y} &Z^C.
}
$$

\end{proof}

\begin{rem}
We can only get that $Z^A[1]$ lies in $\U$ (not necessarily in $\Z$).
\end{rem}


\begin{thm}
Assume that $(\X,\V), (\U,\Y)$ are hereditary cotorsion pairs. Then $F:\Z/[\W]\to \B/\s$ becomes a triangle equivalence.
\end{thm}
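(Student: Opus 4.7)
The plan is to upgrade the already-established extriangle equivalence $F$ to a triangle equivalence by exhibiting a natural isomorphism $\eta\colon F\circ\langle 1\rangle \xrightarrow{\sim} [1]\circ F$ and then checking that standard distinguished triangles in $\Z/[\W]$ go to distinguished triangles in $\B/\s$. Under the hereditary hypothesis, Proposition~\ref{Fro} endows $\Z/[\W]$ with the triangulated structure described just after that proposition, with suspension $\langle 1\rangle$ built from $\EE$-triangles $A\to W_A\to A\langle 1\rangle\dashrightarrow$ with $W_A\in\W$; since $\s$ is thick, $((\X,\V),(\U,\Y))$ is Hovey by Lemma~\ref{L3}, so Nakaoka--Palu's theorem applies and the shift $[1]$ on $\B/\s$ is built from left-minimal $\EE$-triangles $A\to Y_A\to U_A\dashrightarrow$ with $Y_A\in\Y$, $U_A\in\U$.

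To construct $\eta$, fix $A\in\Z$. The defining triangle $A\to W_A\to A\langle 1\rangle\dashrightarrow$ has $W_A\in\W\subseteq\Y$ and $A\langle 1\rangle\in\Z\subseteq\U$, so it is exactly of the form used to represent $A[1]$ in $\B/\s$. Comparing with the chosen minimal triangle $A\to Y_A\to U_A\dashrightarrow$ via the cotorsion pair axioms produces a morphism $W_A\to Y_A$ over $A$, which descends to a morphism $\eta_A\colon F(A\langle 1\rangle)=A\langle 1\rangle\to U_A=F(A)[1]$ in $\B/\s$. Any two such comparison morphisms differ by a factor through $\W$, which is projective-injective in $\Z$ and whose image in $\B/\s$ is zero by Lemma~\ref{BML}; combining this with Lemma~\ref{unique} both shows that $\eta_A$ is well-defined and, by running the construction in both directions, that $\eta_A$ is invertible. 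Naturality of $\eta$ on a morphism $\bar a\colon A\to A'$ follows by the same uniqueness, since the two candidate lifts $a\langle 1\rangle$ and $a[1]$ fill in commutative diagrams between $\EE$-triangles that differ only by a $\W$-factor.

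To see that distinguished triangles are preserved, take a standard triangle in $\Z/[\W]$ arising from an $\EE$-triangle $A\xrightarrow{f}B\xrightarrow{g}C\dashrightarrow$ in $\Z$, with connecting morphism $h\colon C\to A\langle 1\rangle$ obtained by the commutative diagram that stacks this triangle on top of $A\to W_A\to A\langle 1\rangle\dashrightarrow$. Applying $F$ and post-composing with $\eta_A$ yields the sequence $F(A)\xrightarrow{F(\bar f)}F(B)\xrightarrow{F(\bar g)}F(C)\xrightarrow{\eta_A\circ F(\bar h)} F(A)[1]$ in $\B/\s$. By Proposition~\ref{induce} applied in $\B/\s$ (in conjunction with the extriangle-equivalence property of $F$), the underlying three-term sequence is realised by an $\EE$-triangle in $\B/\s$; since $\B/\s$ is triangulated, its $\EE$-triangles coincide with its distinguished triangles, and the connecting morphism is determined up to the canonical identification $\EE(F(C),F(A))=\Hom_{\B/\s}(F(C),F(A)[1])$ provided precisely by $\eta$.

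The main obstacle is the coherence check in step 3: that the connecting morphism $F(\bar h)$ manufactured inside $\Z/[\W]$ really matches, after applying $\eta_A$, the connecting morphism assigned by the triangulation on $\B/\s$. Both are characterised by filling in the same kind of commutative diagram in $\B$, and any two fillings differ by a factor through $\W$; hence the two descriptions agree in $\B/\s$, which finishes the proof.
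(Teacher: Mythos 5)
Your proof is correct and takes essentially the same route as the paper: invoke Propositions~\ref{Fro} and~\ref{Here} to get the two triangulated structures, observe that for $A\in\Z$ the $\W$-approximation triangle defining $A\langle 1\rangle$ is also a $\Y$--$\U$ triangle of the type defining $A[1]$ in $\B/\s$, and conclude that standard triangles match. If anything you are more careful than the paper, which baldly asserts $F\circ\langle 1\rangle=[1]\circ F$; your explicit construction of the natural isomorphism $\eta$ (using $\EE(A\langle1\rangle,Y_A)=0$ to get the comparison map, uniqueness modulo $\W$-factors that die in $\B/\s$ since $\W\subseteq\s$, and the coherence check on the connecting morphism) is the honest way to say what the paper glosses over.
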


\begin{proof}
By Proposition \ref{Fro}, we know that $\Z/[\W]$ is a triangulated category with shift functor $\langle 1\rangle$. Moreover, by Proposition \ref{Here}, we can find that $\s=\s_R=\s_L$ is a thick subcategory, and $((\X,\V),(\U,\Y))$ is a Hovey twin cotorsion pair.
By definition, if $Z\in \Z$, then $Z[1]=Z\langle1\rangle$ in $\B/\s$. For an $\EE$-triangle $A\xrightarrow{f} B\xrightarrow{g} C\dashrightarrow$ in $\Z$,  the image of the induced triangle $A \xrightarrow{\overline f} B\xrightarrow{\overline g} C \xrightarrow{\overline h} A\langle1\rangle$ by equivalent functor $F$ becomes a standard triangle $A \xrightarrow{\underline f} B\xrightarrow{\underline g} C\xrightarrow{\underline h} A[1]$ in $\B/\s$. Moreover, $F\circ \langle1\rangle=[1]\circ F$, this shows that $F\colon\Z/[\W]\to \B/\s$ is a triangle equivalence.
\end{proof}

\begin{rem}
We can find similar results in \cite[Theorem 6.21(2), Proposition 7.14]{S} in the context of Hovey's abelian model structures. Although the
terminology twin cotorsion pair was not used, but hereditary
twin cotorsion pairs were actually considered as well, see also \cite[Definition 6.19]{S}. A connection
to ideal quotients is also considered in \cite[Lemma 6.16, Theorem 6.7(1)]{S}. In fact, Theorem \ref{main} strengthens \v{S}\v{t}ov\'{i}\v{c}ek's result even for abelian cases.
\end{rem}

We give an example of our main results.

\begin{exm}
Let $Q$ be the following infinite quiver:
$$\xymatrix@C=0.5cm@R0.5cm{\cdots \ar[r]^{x_{-5}} &-4 \ar[r]^{x_{-4}} &-3 \ar[r]^{x_{-3}} &-2 \ar[r]^{x_{-2}} &-1 \ar[r]^{x_{-1}} &0 \ar[r]^{x_{0}} &1 \ar[r]^{x_1} &2 \ar[r]^{x_2} &3 \ar[r]^{x_3} &4 \ar[r]^{x_4} &\cdots}$$
Let $\Lambda=kQ/(x_ix_{i+1}x_{i+2},i\neq 4k,x_{4k}x_{4k+1})$. Then the AR-quiver of $\B=\mod \Lambda$ is the following:
{\small $$\xymatrix@C=0.001cm@R1cm{
 &&&\blacklozenge \ar[dr] &&\blacklozenge \ar[dr] &&\blacklozenge \ar[dr] &&&&\blacklozenge \ar[dr] &&\blacklozenge \ar[dr] &&\blacklozenge \ar[dr] &&&&\blacklozenge \ar[dr] &&\blacklozenge \ar[dr] &&\blacklozenge \ar[dr] &&&&\blacklozenge \ar[dr] && \blacklozenge \ar[dr] && \blacklozenge \ar[dr]\\
\cdots \ar[dr] &&\blacklozenge \ar[dr] \ar[ur] &&\circ \ar[ur] \ar[dr] &&\circ  \ar[ur] \ar[dr] &&\bigstar \ar[dr] &&\blacklozenge \ar[ur] \ar[dr] &&\circ \ar[ur] \ar[dr] &&\circ \ar[ur] \ar[dr] &&\bigstar  \ar[dr] &&\blacklozenge \ar[ur] \ar[dr] &&\circ \ar[ur] \ar[dr] &&\circ \ar[ur] \ar[dr] &&\bigstar \ar[dr] &&\blacklozenge \ar[ur] \ar[dr] &&\circ \ar[dr] \ar[ur] &&\circ \ar[dr] \ar[ur] &&\bigstar \ar[dr] &&\cdots \\
&\circ \ar[ur] &&\circ \ar[ur] &&\bigstar \ar[ur] &&\circ \ar[ur]  &&\circ \ar[ur] &&\circ\ar[ur] &&\bigstar \ar[ur] &&\circ \ar[ur] &&0 \ar[ur] &&\circ \ar[ur] &&\bigstar \ar[ur] &&\circ \ar[ur]  &&\circ \ar[ur] &&\circ \ar[ur] &&\bigstar \ar[ur] &&\circ \ar[ur] &&\circ \ar[ur]
}
$$}
The additive closure of the indecomposable objects denoted by $\bigstar$ and $\blacklozenge$ form a thick subcategory in $\B$,  which is denoted by $\s$. We denote the additive closure of the indecomposable objects in $\blacklozenge$ by $\X$, it is the subcategory of all the projective objects in $\B$. Then $(\X,\s)$ is a cotorsion pair in $\s$,  and we also have two hereditary cotorsion pairs $(\X,\B)$ and $({^{\bot_1}}\s,\s)$. In this case, $\Z={^{\bot_1}}\s$ and $\W=\X$. The indecomposable objects in $({^{\bot_1}}\s)/[\X]$ are denoted by $\spadesuit$ in the diagram:
{\small $$\xymatrix@C=0.001cm@R1cm{
 &&&\circ \ar[dr] &&\circ \ar[dr] &&\circ \ar[dr] &&&&\circ \ar[dr] &&\circ \ar[dr] &&\circ \ar[dr] &&&&\circ \ar[dr] &&\circ \ar[dr] &&\circ \ar[dr] &&&&\circ \ar[dr] && \circ \ar[dr] && \circ \ar[dr]\\
\cdots \ar[dr] &&\circ \ar[dr] \ar[ur] &&\circ \ar[ur] \ar[dr] &&\spadesuit \ar[ur] \ar[dr] &&\circ \ar[dr] &&\circ \ar[ur] \ar[dr] &&\circ \ar[ur] \ar[dr] &&\spadesuit \ar[ur] \ar[dr] &&\circ  \ar[dr] &&\circ \ar[ur] \ar[dr] &&\circ \ar[ur] \ar[dr] &&\spadesuit\ar[ur] \ar[dr] &&\circ \ar[dr] &&\circ \ar[ur] \ar[dr] &&\circ \ar[dr] \ar[ur] &&\spadesuit \ar[dr] \ar[ur] &&\circ \ar[dr] &&\cdots \\
&\spadesuit \ar[ur] &&\spadesuit \ar[ur] &&\circ\ar[ur] &&\circ \ar[ur]  &&\spadesuit \ar[ur] &&\spadesuit \ar[ur] &&\circ \ar[ur] &&\circ \ar[ur] &&\spadesuit \ar[ur] &&\spadesuit  \ar[ur] &&\circ\ar[ur] &&\circ \ar[ur]  &&\spadesuit \ar[ur] &&\spadesuit \ar[ur] &&\circ \ar[ur] &&\circ \ar[ur] &&\spadesuit \ar[ur]
}
$$}
The indecomposable objects denoted by $\clubsuit$ are non-zero objects in $\B/\s$ which do not lie in ${^{\bot_1}}\s$, they are isomorphic to the objects in ${^{\bot_1}}\s$:
{\small $$\xymatrix@C=0.001cm@R1cm{
 &&&\circ \ar[dr] &&\circ \ar[dr] &&\circ \ar[dr] &&&&\circ \ar[dr] &&\circ \ar[dr] &&\circ \ar[dr] &&&&\circ \ar[dr] &&\circ \ar[dr] &&\circ \ar[dr] &&&&\circ \ar[dr] && \circ \ar[dr] && \circ \ar[dr]\\
\cdots \ar[dr] &&\circ \ar[dr] \ar[ur] &&\clubsuit \ar[ur] \ar[dr] &&\spadesuit \ar[ur] \ar[dr]_(.75){\simeq} &&\circ \ar[dr] &&\circ \ar[ur] \ar[dr] &&\clubsuit \ar[ur] \ar[dr] &&\spadesuit \ar[ur] \ar[dr]_(0.75){\simeq} &&\circ  \ar[dr] &&\circ \ar[ur] \ar[dr] &&\clubsuit \ar[ur] \ar[dr] &&\spadesuit\ar[ur] \ar[dr]_(.75){\simeq} &&\circ \ar[dr] &&\circ \ar[ur] \ar[dr] &&\clubsuit \ar[dr] \ar[ur] &&\spadesuit \ar[dr]_(.75){\simeq} \ar[ur] &&\circ \ar[dr] &&\cdots \\
&\spadesuit \ar[ur] &&\spadesuit \ar[ur]^(.75){\simeq} &&\circ\ar[ur] &&\clubsuit \ar[ur]  &&\spadesuit \ar[ur] &&\spadesuit \ar[ur]^(.75){\simeq} &&\circ \ar[ur] &&\clubsuit \ar[ur] &&\spadesuit \ar[ur] &&\spadesuit  \ar[ur]^(.75){\simeq} &&\circ \ar[ur] &&\clubsuit \ar[ur]  &&\spadesuit \ar[ur] &&\spadesuit \ar[ur]^(0.75){\simeq} &&\circ \ar[ur] &&\clubsuit \ar[ur] &&\spadesuit \ar[ur]
}
$$}
\end{exm}

\vspace{1cm}
\section*{Acknowledgments}

The authors would like to thank the reviewers for their valuable suggestions on mathematical content and English expressions.

\vspace{1cm}

\hspace{-4mm}\textbf{Data Availability}\hspace{2mm} Data sharing not applicable to this article as no datasets were generated or analysed during
the current study.
\vspace{2mm}

\hspace{-4mm}\textbf{Conflict of Interests}\hspace{2mm} The authors declare that they have no conflicts of interest to this work.
\vspace{1cm}

\end{document}